\numberwithin{equation}{section}
\newtheorem{theorem}{Theorem}[section]
\theoremstyle{plain}
\newtheorem{corollary}[theorem]{Corollary}
\newtheorem{corollary-definition}[theorem]{Corollary-Definition}
\newtheorem{definition}[theorem]{Definition}
\newcommand{\thistheoremname}{}
\newtheorem*{genericthm*}{\thistheoremname}
\newenvironment{namedthm*}[1]
  {\renewcommand{\thistheoremname}{#1}%
   \begin{genericthm*}}
  {\end{genericthm*}}
\newtheorem{lemma}[theorem]{Lemma}
\newtheorem{proposition}[theorem]{Proposition}
\numberwithin{equation}{section}
\theoremstyle{definition}
\newtheorem{example}[theorem]{Example}
\newtheorem{remark}[theorem]{Remark}
\newcommand{\blank}{\hspace{0.04cm} \rule{2.4mm}{.4pt} \hspace{0.04cm} }
\newcommand{\blankd}{\hspace{0.04cm} \rule{1.5mm}{.4pt} \hspace{0.04cm} }
\DeclareMathOperator{\Ker}{\mathrm{Ker}}
\DeclareMathOperator{\Cok}{\mathrm{Coker}}
\DeclareMathOperator{\coker}{\mathrm{coker}}
\DeclareMathOperator{\Hom}{\mathrm{Hom}}
\DeclareMathOperator{\Ext}{\mathrm{Ext}}
\DeclareMathOperator{\Tor}{\mathrm{Tor}}
\DeclareMathOperator{\Imr}{\mathrm{Im}}
\DeclareMathOperator{\Tr}{\mathrm{Tr}}
\DeclareMathOperator{\ot}{\overset{\rightharpoondown}{\otimes}}
\mathchardef\mhyphen="2D
\newcommand{\Z}{\mathbb{Z}}
\newcommand{\Q}{\mathbb{Q}}
\newcommand{\injt}{\mathfrak{s}}
\newcommand{\ab}{\mathbf{Ab}}
\newcommand{\lra}{\longrightarrow}
\newcounter{hours}
\newcounter{minutes}
\begin{document}

\title[\today]{Deconstructing Auslander's formulas, I. Fundamental sequences associated with an additive functor.}
\thanks{The author was supported in part by the Shota Rustaveli National Science Foundation of Georgia Grants NFR-18-10849 and FR-24-8249}

\author{Alex Martsinkovsky}
\address{Mathematics Department\\
Northeastern University\\
Boston, MA 02115, USA}
\email{a.martsinkovsky@northeastern.edu}

\date{\today, \setcounter{hours}{\time/60} \setcounter{minutes}{\time-\value{hours}*60} \thehours\,h\ \theminutes\,min}

\subjclass[2020]{Primary: 16E30 ; Secondary:  55U20}
\keywords{Additive functor, injective stabilization, projective stabilization, sub-stabilization, quot-stabilization, satellite, fundamental sequence, finitely presented functor, tensor-copresented functor, universal coefficient theorems, Auslander-Gruson-Jensen transformation, zeroth derived functor}
\dedicatory{In memory of Ragnar-Olaf Buchweitz}

\begin{abstract}
For any additive functor from modules (or, more generally, from an abelian category with enough projectives or injectives), we construct long sequences tying 

together the derived functors, the satellites, and the stabilizations of the functor. For half-exact functors, the obtained sequences are exact. For general functors, nontrivial homology may only appear at the derived functors. Specializing to the familiar Hom and tensor product functors on finitely presented modules, we recover the classical formulas of Auslander. Unlike those formulas, our results hold for arbitrary rings and arbitrary modules, finite or infinite. The same formalism leads to universal coefficient theorems for homology and cohomology without any assumptions. The new results are even more explicit for the cohomology of projective complexes  and the homology of flat complexes. 

\end{abstract}

\maketitle
\tableofcontents

\section{Introduction}

Among the many contributions of Maurice Auslander to mathematics, it is not easy to find anything more ubiquitous than the transpose of a finitely presented module. Given such a module $A$ over a ring $\Lambda$ with finite projective presentation
\[
P_{1} \lra P_{0} \lra A \lra 0,
\]
the transpose $\Tr A$ of $A$ is the $\Lambda^{op}$-module defined by the exact sequence 
\begin{equation}\label{D:TrA}
0 \lra A^{\ast} \lra P_{0}^{\ast} \lra P_{1}^{\ast} \lra \Tr A \lra 0,
\end{equation}
obtained by dualizing the presentation of $A$ into $\Lambda$. This innocuous-looking and elementary construction is fundamental for Auslander-Reiten theory. It also appears in a variety of formulas of a rather general nature. Its primary purpose is to provide a duality on the categories of finitely presented modules modulo projectives  based on the well-known duality for finitely generated projectives. The lack of such duality for infinite projectives makes the transpose for infinite modules useless. This paper introduces a homological formalism that completely bypasses the transpose, produces formulas valid for arbitrary modules over arbitrary rings, and, for finitely presented modules, specializes to Auslander's formulas

In Section~\ref{S:examples}, we look at three motivating examples: the Auslander-Reiten formula and two exact sequences arising from the canonical maps from the tensor product functors to the covariant Hom functors. They all use the transpose and require the fixed argument to be finitely presented. We show that all of those formulas fail when this requirement is dropped. 

The Auslander-Reiten formula is treated in Section~\ref{S:AR-form}. A general result in that direction was already established in~\cite[Remark 9.8]{MR-1}. Here, we briefly review it and state it in a streamlined notation. We also prove a dual result, Proposition~\ref{P:dual}, which doesn't seem to have a classical prototype.

Section~\ref{S:rfs-cov} introduces the notion of the right fundamental sequence associated with an additive covariant functor and establishes the first main result, Theorem~\ref{T:co-fund-right}. It combines the derived functors, the satellites, and the sub-stabilization (formerly known as injective stabilization) of a given functor into a long sequence. For half-exact functors, this sequence is exact. Auslander's result describing the kernel and the cokernel of the canonical natural transformation $A \otimes \blank \lra (A^{\ast}, \blank)$, where $A$ is finitely presented, is recovered as a special case. 

Section~\ref{S:left-her} deals with half-exact finitely presented functors on modules over left-hereditary rings. It should be viewed as a worked example.

In Section~\ref{S:L0}, we establish the existence of left fundamental sequences for covariant functors.  Auslander's result describing the kernel and the cokernel of the canonical natural transformation $A^{\ast} \otimes \blank \lra (A, \blank)$, where $A$ is finitely presented, is recovered as a special case.

Fundamental sequences for contravariant functors and the corresponding theorems are found in Sections~\ref{S:right-fs-contra} and \ref{S:left-fs-contra}.

The first major application of the developed techniques is contained in Section~\ref{S:ucfc}, where we establish a universal coefficient theorem for the cohomology of arbitrary complexes. Classically, this result is known for projective complexes with projective boundaries. Our result holds for arbitrary complexes without any assumptions. Furthermore, this result is sharpened for arbitrary projective complexes. Our approach is based on the fact that the cohomology functor is always finitely presented. 

Section~\ref{S:ucfh} contains another major application: a universal coefficient theorem for the homology of arbitrary complexes. Classically, this result is known for flat complexes with flat boundaries. Our approach allows us to dispose of any assumptions on the complexes. Similar to the cohomological theorems, we have a sharper result for arbitrary flat complexes. 

Unlike their cohomological counterparts, which are based on finitely presented functors, the results for homology cannot use this machinery because tensor products are not always finitely presented. However, the homology functors are always  ``tensor-copresented''. A good way to visualize the analogy between finitely presented and tensor-copresented functors is via the Auslander-Gruson-Jensen (AGJ) transformation, which interchanges the tensor product and the Hom functors. To wit, given an additive functor $F$, the value of the AGJ transform of $F$ on a module~$A$ is defined as the class of natural transformations from $F$ to $A \otimes \blank$. It is not difficult to see that tensor-copresented functors are precisely the AGJ transforms of finitely presented ones. Unfortunately, it is unclear how to use, beyond the heuristics, the AGJ transformation in our context. While the AGJ transformation is exact on the category of finitely presented functors, it fails to be exact on the category of tensor-copresented functors. This is demonstrated in Section~\ref{S:tensor-cop}, where we give a simple counterexample. It is based on another counterexample showing that the tensor product is not an injective object in the category of tensor-copresented functors. Thus, the AGJ transformation is not a duality between finitely presented and tensor-copresented functors. To bypass all these difficulties, we give, in Section~\ref{S:tensor-cop}, a direct construction of the quot-stabilization (formerly known as the projective stabilization) of an arbitrary tensor-copresented functor. This result sets the stage for the universal coefficient theorems proved in Section~\ref{S:ucfh}.
\smallskip

\textbf{Blanket assumptions}. Throughout this paper, $\mathcal{A}$ will denote the category of left (or right, depending on the desired context) modules over an associative ring~$\Lambda$ with identity. More generally, $\mathcal{A}$ may be assumed to be an abelian category with enough projectives or, depending on the context, enough injectives. $\ab$ will denote the category of abelian groups and group homomorphisms. All functors will be assumed to be additive. 

\section{Motivating examples}\label{S:examples}

For motivating examples, we keep the notation from the previous section and revisit several known results. Assume that $\mathcal{A}$ is the category of all (say, left) modules over a ring $\Lambda$ and $A$ is a finitely presented module. In general, 
$\Tr A$ depends on the chosen projective presentation of $A$ and is only well-defined up to projectively stable equivalence in the category of finitely presented modules over $\Lambda^{op}$ or, in short, modulo projectives. If $\Lambda$ is semiperfect, then by choosing a minimal presentation of $A$, one can fix the isomorphism type of $\Tr A$, but even then, the symbol $\Tr$ need not be functorial. This is because an action of $\Tr$ on morphisms would require liftings to the chosen projective presentations, and such liftings are unique only up to homotopy. Consequently, a lifting of a composition need not coincide with the compositions of the liftings.

\begin{example}
 
The well-known Auslander-Reiten formula 
\[
 D_{\mathbf{J}}\Ext^{1}_{\Lambda}(A,B) \simeq \overline{\Hom}_{\Lambda} (B, D_{\mathbf{J}}\Tr A),
 \]  
is a basic tool in Auslander-Reiten theory. Here  $\Lambda$ is an algebra over a commutative ring $\Bbbk$ (which could be $\Z$), $A$ and $B$ are (left) $\Lambda$-modules, $A$ is finitely presented, $\mathbf{J}$ is an injective $\Bbbk$-module, $D_{\mathbf{J}} := \Hom_{\Bbbk}(\blank, \mathbf{J})$, and the transpose is computed using any finite presentation of $A$. The overlined $\Hom$ on the right-hand side stands for $\Hom$ modulo injectives, i.e., the quotient of the usual $\Hom$ by the subgroup of all maps factoring through injectives. It ``neutralizes'' the projective ambiguity in the definition of the transpose, which is turned into an injective ambiguity 
by~$D_{\mathbf{J}}$. If~$A$ is not finitely presented, then  the left-hand side still makes sense, but the right-hand side is not well-defined. This may happen even when $A$ is finitely presented but the transpose is computed via an infinite presentation. In fact, this may happen even for $A \simeq 0$. To see that, assume that $\Lambda$ is not left-coherent. Then, by a theorem of Chase~\cite[Theorem~2.1]{Chase}, 
$\Lambda_{\Lambda}^{N}$ is not flat for some $N$. Now choose the following  projective resolution of the zero module
\[
0 \lra \prescript{}{\Lambda}\Lambda^{(N)} \overset{\mathrm{id}}\lra \prescript{}{\Lambda}\Lambda^{(N)} \lra 0 \lra 0 \lra 0.
\]
Computing the transpose with this resolution, we have $\Tr A \simeq \Lambda_{\Lambda}^{N}$. Now choose $\Bbbk : = \Z$ and $\mathbf{J} := \Q/\Z$. By a theorem of Lambek~\cite[p.~238]{L64}, the character module $D_{\mathbf{J}}\Tr A$ is not injective. Evaluating the right-hand side of the formula on $B := D_{\mathbf{J}}\Tr A$
we have a nonzero group, whereas the left-hand side is zero. Thus the formula fails.
\end{example}

\begin{example}
 Let $A$ be a finitely presented $\Lambda$-module, and $e_{A} : A \lra A^{\ast\ast}$ the canonical evaluation map. Then there is an exact sequence
\begin{equation}\label{Eq:bidual}
\xymatrix
	{
	0 \ar[r]
	& \Ext^{1}(\Tr A, \Lambda) \ar[r]
	& A \ar[r]^{e_{A}}
	& A^{\ast\ast} \ar[r]
	& \Ext^{2}(\Tr A, \Lambda) \ar[r]
	& 0.
	}
\end{equation}
This is the $\Lambda$-component of the exact sequence of functors~\cite[Proposition 2.6 (a)]{AB}
\begin{equation}\label{D:ext}
\xymatrix@R1pt@C15pt
	{
	0 \ar[r]
	& \Ext^{1}(\Tr A, \blank) \ar[r]
	& A \otimes \blank \ar[r]^{\rho_{A}}
	& (A^{\ast}, \blank) \ar[r]
	& \Ext^{2}(\Tr A, \blank) \ar[r]
	& 0.
	}\footnote{The reader may check that  $\rho_{A}$ is the zeroth right-derived transformation for $A \otimes \blank$.}
\end{equation}
When $A$ is not finitely presented, the sequence~\eqref{Eq:bidual} fails to be exact already for vector spaces over a field. Indeed, in that case the end terms vanish, forcing $\rho_{A}$, and hence $e_{A}$, to be an isomorphism, which is impossible since $A^{\ast\ast}$ has a larger dimension than~$A$.
\end{example}

\begin{example}
Let $A$ and $B$ be left $\Lambda$-modules. Next we examine the canonical map 
\[
\varphi : A^{\ast} \otimes B \lra (A, B) : (f, b) \mapsto \big(a \mapsto f(a)b\big).
\]
As $B$ is canonically isomorphic to $(\Lambda, B)$, we interpret $\varphi$ as the map 
\[
(A, \Lambda) \otimes (\Lambda, B) \to (A,B)
\]
given by the ``composition" of the arguments. Therefore the image of 
$\varphi$ consists of all maps $A \to B$ factoring through finitely generated projectives.

Under the assumption that $A$ is finitely presented, we have an exact sequence of functors~\cite[Proposition~7.1]{A66}.
\begin{equation}\label{D:tor}
\xymatrix@R1pt@C15pt
	{
	0 \ar[r]
	& \Tor_{2}(\Tr A, \blank) \ar[r]
	& A^{\ast} \otimes \blank \ar[r]^{\lambda_{A}}
	& (A, \blank) \ar[r]
	& \Tor_{1}(\Tr A, \blank) \ar[r]
	& 0.
	}\footnote{The reader may check that $\lambda_{A}$ is the zeroth left-derived transformation for $(A, \blank)$.}
\end{equation}

However, when $A$ is not finitely generated, this sequence fails to be exact already for vector spaces over a field. In that case, the end terms vanish, and the exactness of the sequence would force $\lambda_{A}$ to be an isomorphism, which is impossible since, evaluating it on $B : = A$, we would have the identity map on $A$ factor through a finite-dimensional vector space, a contradiction. 
\end{example}

\section{The right fundamental sequence of a covariant functor}\label{S:left-fs-co}\label{S:rfs-cov}

The main tools that will give us the desired general formulas are the injective (respectively, projective) stabilization of an additive functor, which, roughly speaking, ``measure the deviation'' of the functor from its zeroth derived functor. These concepts are already present in~\cite{A66} and~\cite{AB}, and were substantially expounded in~\cite{MR-1, MR-2, MR-3}. One may think that the adjectives ``injective'' and ``projective'' refer to the choice of resolutions. It is formally true for covariant functors but  false for contravariant ones, which leads to considerable confusion in practical situations. Thus, for psychological reasons, we have changed the terminology and instead speak of ``sub-stabilizations'' and, respectively, ``quot-stabilizations''.

\subsection{The right fundamental sequence}

Suppose $\mathcal{A}$ has enough injectives. Then, given an additive functor 
${F} : \mathcal{A} \lra \ab$, the right-derived functors $R^{i}F : \mathcal{A} \lra \ab$ can be defined for all $i \in \Z$. To find $R^{i}F(B)$ for $B \in \mathcal{A}$, pick an injective resolution of $B$, apply $F$ to it, and define $R^{i}F(B)$ as the $-i$th homology of the resulting complex. The universal property of kernels yields a map $F(B) \lra R^{0}F(B)$ functorial in $B$. The kernel $\overline{F}$ of the corresponding  natural transformation 
$\rho_{F} : F \lra R^{0}F$ is called the \texttt{sub-stabilization} of $F$. Thus, we have a defining exact sequence
\begin{equation}\label{E:first-def-seq}
0 \lra \overline{F} \lra F \overset{\rho_{F}}\lra R^{0}F.
\end{equation}

As $\rho_{F}$ is clearly an isomorphism on injectives, $\overline{F}$ vanishes on injectives.\footnote{In fact, $\overline{F}$ is the largest subfunctor of $F$ with this property. Since $\mathcal{A}$ is assumed to have enough injectives, $\overline{F}$ can also be characterized as the largest effaceable subfunctor of $F$.} It now follows from Schanuel's lemma, that $\overline{F} \circ \Sigma$ is a well-defined functor, where~$\Sigma$ denotes the operation of taking the cokernel of the inclusion of a module in an injective container. For brevity, we shall refer to that cokernel as a cosyzygy module. 

Equivalently, given an object $B$, $\overline{F}(B)$ can be defined and computed by embedding $B$ in an injective container $0 \lra B \overset{\iota} \lra I$, applying $F$, and taking the kernel. This yields a system of defining exact sequences
\begin{equation}\label{E:i-stab-via-container}
 0 \lra \overline{F}(B) \overset{k}\lra {F}(B)  \overset{F(\iota)}\lra F(I),
\end{equation}
where, to avoid notational clutter, we have stripped $k$ of any mention of $F$ and $B$.

Next, we want to construct a natural transformation 
$\beta_{F} : R^{0}F \lra \overline{F} \circ \Sigma$. To this end, it is convenient to use the $\Ker$-$\Cok$ exact sequence (see C.~H.~Dowker~\cite[Theorem 1]{D} and J.~B.~Leicht~\cite[Satz 9]{Leicht64}), defined for any pair of composable morphisms.
To avoid confusing it with the $\Ker$-$\Cok$ sequence of the snake lemma, we shall use the term ``circular sequence''. 

To define $\beta_{F}(B)$, start with an injective resolution 
\[
0 \lra B \overset{i} \lra I^{0} \overset{d_{0}}\lra I^{1} \overset{d_{1}}\lra I^{2} \lra \ldots \quad
\] 
Let $e : I^{0} \lra \Sigma B$ be the cokernel of $i$ and 
\[
\xymatrix
	{
	I^{0}  \ar@{->>}[rd]_{e} \ar[rr]^{d_{0}}
	&
	& I^{1} 
\\
	& \Sigma B \ar@{>->}[ru]_{m}
	&
	}
\]
the corresponding epi-mono factorization of $d_{0}$. Recall that the first right satellite $S^{1}F$ is defined as the functor whose value at $B$ is $\Cok F(e)$.
It is easy to see that it is well-defined and additive.

Applying $F$ and passing to the associated circular diagram
\begin{equation}\label{D:circ-diag}
\begin{tikzcd}
 0 \ar[d]
 & \overline{F}(B) \ar[d, tail, "k"]
 &
 &
 & 0
\\
 K \ar[d, rightarrowtail, "a"'] \ar[dr, tail, "j"'] 
 & F(B) \ar[d, "F(i)"] \ar[l, "z"', dashrightarrow]
 &
 &
 & L \ar[u]
\\
 R^{0}F(B) \ar[r, tail] \ar[ddr, "\beta_{F}(B)"']
 & F(I^{0}) \ar[rd, "F(e)"'] \ar[rr, "F(d_{0})"]
 &
 & F(I^{1}) \ar[r, two heads] \ar[ru, two heads]
 & B_{2} \ar[u, "l"']
\\
 &
 & F(\Sigma B) \ar[ru, "F(m)"'] \ar[rd, two heads]
 \\
 &\overline{F}(\Sigma B) \ar[ru, tail] \ar[rr, "g"']
 &
 & S^{1}F(B) \ar[ruu, "h"']
\end{tikzcd}
\end{equation}
where $K := \Ker F(e)$, $B_{2} := \Cok F(d_{0})$, $L := \Cok F(m)$, and $S^{1}F$ denotes the first right satellite of $F$, we have a six-term exact sequence 
\begin{equation}\label{D:6-term-es}
 0 \lra K \lra R^{0}F(B) \lra \overline{F}(\Sigma B) \lra S^{1}F(B) \lra B_{2} \lra L \lra 0
\end{equation} 

The universal property of kernels yields the dashed arrow
$z : F(B) \to K$, lifting $F(i)$ against $j$. Plainly, $az : F(B) \lra R^{0}F(B)$ is the canonical map afforded by the universal property of kernels, and, including it in the sequence, we have a seven-term sequence
 \begin{equation}\label{D:7-term-cx}
\begin{tikzcd}[cramped, sep=small]
 0 \ar[r]
 & \overline{F}(B) \ar[r, "k"] 
 & F(B) \ar[r, "az"]  
 & R^{0}F(B) \ar[r] 
 & \overline{F}(\Sigma B) \ar[r]
 & S^{1}F(B) \ar[r]
 & B_{2} \ar[r]
 & L \ar[r] 
 & 0.
 \end{tikzcd}
\end{equation}
Clearly, the sequence is functorial in $B$ at the first five terms. In particular, we have the desired natural transformation $\beta_{F} : R^{0}F \lra \overline{F} \circ \Sigma$. It is also clear that the isomorphism types of the first five terms are determined uniquely, but this cannot be said about $B_{2}$ and $L$, which depend on the choice of the resolution. Our next goal is to find a functorial replacement for them. This will be done by ``extending'' the current sequence.

Applying $F$ to the epi-mono factorization $d_{1} = m_{1}e_{1}$ and combining the result with a slightly redrawn diagram~\eqref{D:circ-diag}, we have a commutative diagram of solid arrows:

\begin{equation}\label{D:circ-diag-extended}
\begin{tikzcd}
 &
 & R^{0}F(B) \ar[d, rightarrowtail] \ar[ldd, dashrightarrow] \ar[dll, "\beta_{F}"']
 & F(B) \ar[l] \ar[ld]
 & 
\\
 \overline{F}(\Sigma B) \ar[dd] \ar[dr, tail, "\iota"']
 &
 & F(I^{0}) \ar[dl, "F(e)" description, pos=0.4] \ar[dd, "F(d_{0})"', pos=0.37] \ar[r]
 & Z_{1} \ar[r, two heads] \ar[ddl, tail, "\kappa_{1}"', pos=0.6] \ar[ddddlll, blue, dashrightarrow]
 & R^{1}F(B) \ar[ddddllll, blue, dashrightarrow]
\\
 & F(\Sigma B) \ar[dl, two heads, "\pi"'] \ar[dr, "F(m)"', pos=0.4] \ar[rru, magenta, dashrightarrow]
 &
 &
 &
\\
 S^{1}F(B) \ar[rrrruu, magenta, dashrightarrow, "\eta", pos=0.4]
 &
 & F(I^{1})  \ar[ldd, "F(e_{1})"']  \ar[dddd, "F(d_{1})"', pos=0.35]  \ar[r]
 & Z_{2} \ar[r] \ar[ldddd, tail, "\kappa_{2}"', pos=0.6]
 & R^{2}F(B)
\\
\\
 \overline{F}(\Sigma^{2} B) \ar[r, tail] \ar[dd]
 &  F(\Sigma^{2} B) \ar[ddr, "F(m_{1})"', pos=0.8] \ar[ddl, two heads] \ar[rruu, magenta, dashrightarrow]
\\
\\
  S^{1}F(\Sigma B) \ar[rrrruuuu, magenta, dashrightarrow]
  &
  & F(I^{2})
\end{tikzcd}
\end{equation}
where $\kappa_{i} : Z_{i} \lra F(I^{i})$ is the kernel of $F(d_{i})$, $i=1,2$, and  
$Z_{i} \lra R^{i}F(B)$ is the cokernel of $F(I^{i-1}) \lra Z_{i}$, $i=1,2$. 

Now, we define the dashed arrows. The diagonal 
$R^{0}F(B) \lra {F}(\Sigma B)$ just expresses the commutativity of the ambient square. Since $e_{1}m = 0$, the universal property of kernels implies that $F(\Sigma B) \lra F(I^{1})$ factors through $\kappa_{1}$, which yields the dashed arrow $F(\Sigma B) \lra Z_{1}$. Precomposing with the monomorphism $\kappa_{1}$ shows that the triangle $F(I^{0}), F(\Sigma B), Z_{1}$ commutes, which allows to define the dashed arrow $\eta : S^{1}F(B) \lra R^{1}F(B)$ as the induced map on the relevant cokernels.

To define the next two arrows, notice that $F(e_{1})\kappa_{1}$ factors through  the kernel of $F(m_{1})$, which yields the arrow $Z_{1} \lra \overline{F}(\Sigma^{2} B)$. It is clear that the composition 
\[
F(I^{0}) \lra Z_{1} \lra \overline{F}(\Sigma^{2} B) \lra F(\Sigma^{2} B)
\] 
is zero, and, since the last map is monic, the middle map extends to 
$R^{1}F(B)$, which gives the next arrow. The remaining two arrows are defined similarly to the first two.

Splicing the first five terms of~\eqref{D:7-term-cx} with the matching part of the peripheral sequence in~\eqref{D:circ-diag-extended}, and continuing further down the diagram in the manner just described, we have a sequence

\begin{equation}\label{D:rfs-co}
\begin{tikzcd}
 0 \ar[r] 
   & \overline{F}(B) \ar[r] 
      & F(B) \ar[r] 
                  \ar[d, phantom, ""{coordinate, name=Z}]
         & R^{0}F(B) \ar[dll, rounded corners,
                                   to path={ -- ([xshift=2ex]\tikztostart.east)
						|- (Z) [near end]\tikztonodes
						-| ([xshift=-2ex]\tikztotarget.west) 
						-- (\tikztotarget)}] 
\\
   & \overline{F}(\Sigma B) \ar[r] 
      & S^{1}F(B) \ar[r]
      			  \ar[d, phantom, ""{coordinate, name=Y}]
         & R^{1}F(B) \ar[dll, rounded corners,
                                   to path={ -- ([xshift=2ex]\tikztostart.east)
						|- (Y) [near end]\tikztonodes
						-| ([xshift=-2ex]\tikztotarget.west) 
						-- (\tikztotarget)}] 
\\
   & \overline{F}(\Sigma^{2} B) \ar[r] 
      & S^{1}F(\Sigma B) \ar[r]
      			  \ar[d, phantom, ""{coordinate, name=X}]
         & R^{1}F(\Sigma B) \ar[dll, rounded corners,
                                   to path={ -- ([xshift=2ex]\tikztostart.east)
						|- (X) [near end]\tikztonodes
						-| ([xshift=-2ex]\tikztotarget.west) 
						-- (\tikztotarget)}]
\\
   & \overline{F}(\Sigma^{3} B) \ar[r] 
      & S^{1}F(\Sigma^{2} B) \ar[r]
         & \ldots 
\end{tikzcd}
\end{equation}
which is functorial in $B$. Utilizing dimension shift, we have a sequence of functors
\begin{equation}\label{D:rfs-co-fun}
\begin{tikzcd}
 0 \ar[r] 
   & \overline{F} \ar[r] 
      & F \ar[r, "\rho_{F}"] 
                  \ar[d, phantom, ""{coordinate, name=Z}]
         & R^{0}F \ar[dll, "\beta_{F}", rounded corners,
                                   to path={ -- ([xshift=2ex]\tikztostart.east)
						|- (Z) [pos=0.3]\tikztonodes
						-| ([xshift=-2ex]\tikztotarget.west) 
						-- (\tikztotarget)}] 
\\
   & \overline{F} \circ \Sigma \ar[r] 
      & S^{1}F \ar[r]
      			  \ar[d, phantom, ""{coordinate, name=Y}]
         & R^{1}F \ar[dll, rounded corners,
                                   to path={ -- ([xshift=2ex]\tikztostart.east)
						|- (Y) [near end]\tikztonodes
						-| ([xshift=-2ex]\tikztotarget.west) 
						-- (\tikztotarget)}] 
\\
   & \overline{F} \circ \Sigma^{2} \ar[r] 
      & S^{2}F  \ar[r]
      			  \ar[d, phantom, ""{coordinate, name=X}]
         & R^{2}F \ar[dll, rounded corners,
                                   to path={ -- ([xshift=2ex]\tikztostart.east)
						|- (X) [near end]\tikztonodes
						-| ([xshift=-2ex]\tikztotarget.west) 
						-- (\tikztotarget)}]
\\
   & \overline{F} \circ \Sigma^{3} \ar[r] 
      & S^{3}F \ar[r]
         & \ldots
\end{tikzcd}
\end{equation}


\begin{definition}
 We shall call~\eqref{D:rfs-co-fun} the right fundamental sequence of $F$.
\end{definition}

Our next goal is to understand the exactness properties of the fundamental sequence. The circular sequence and the following obvious lemma will be  useful for that purpose.

\begin{namedthm*}{The Parallelogram Lemma}
Suppose, in the commutative diagram 
\[
\begin{tikzcd}
 A \ar[r] \ar[rd]
 & B \ar[d, "f"'] \ar[rd]
 \\
 & C \ar[r]
 & D
\end{tikzcd}
\]
the two peripheral compositions are zero.

\begin{enumerate}
 \item [a)] If $f$ is epic and the upper peripheral sequence $A \to B \to D$ is exact at $B$, then the lower peripheral sequence $A \to C \to D$ is exact at $C$.
 \item [b)] If $f$ is monic and the lower peripheral sequence $A \to C \to D$ is exact at $C$, then the upper peripheral sequence $A \to B \to D$ is exact at $B$.
\end{enumerate}
\end{namedthm*}

\begin{proof}
 Obvious.
\end{proof}

\begin{theorem}\hfill\label{T:co-fund-right}
\begin{enumerate}
 \item The right fundamental sequence~\eqref{D:rfs-co-fun} is a complex of functors, whose  homology is zero except, possibly, at the terms $R^{i}F$.
  
 \item If $F$ is half-exact, then the sequence is exact. 
 
 \item If $F$ is epi-preserving, then $\beta_{F}$ is epic.
 
 \item If $F$ is mono-preserving, then $\overline{F} = 0$ and $\rho_{F}$ is monic.
 
 \item $\rho_{F}$ is an isomorphism if and only if $F$ is left-exact.

\end{enumerate}
\end{theorem}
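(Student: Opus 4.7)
The short items (3)--(5) come first. For (4), apply $F$ to an injection $0 \lra B \overset{\iota}\lra I$ into an injective container; mono-preservation makes $F(\iota)$ monic, so by \eqref{E:i-stab-via-container} we have $\overline{F}(B) = 0$, whence \eqref{E:first-def-seq} forces $\rho_F$ to be monic. For (3), epi-preservation applied to the epi $e\colon I^0 \twoheadrightarrow \Sigma B$ makes $F(e)$ epic, so $S^1F(B) = \Cok F(e) = 0$, and reading off \eqref{D:6-term-es} then forces $\beta_F(B)$ to be epic. For (5), left-exactness implies mono-preservation, so $\overline{F} = 0$ by (4); moreover, applying the left-exact $F$ to the exact sequence $0 \to B \to I^0 \to I^1$ identifies $F(B)$ with $\ker F(d_0) = R^0F(B)$, making $\rho_F$ an isomorphism. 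Conversely, if $\rho_F$ is an isomorphism, then $F \cong R^0F$, which is left-exact as any zeroth right-derived functor is (horseshoe lemma plus long-exact cohomology).

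For the main claim (1), the plan is to iterate the construction of the seven-term sequence \eqref{D:7-term-cx} with $\Sigma^{i-1} B$ in place of $B$, invoking the satellite dimension-shift $S^iF(B) = S^1F(\Sigma^{i-1}B)$. The first five terms of each iterate are exact by construction, and threaded together they account for exactness of \eqref{D:rfs-co-fun} at every position $\overline{F} \circ \Sigma^i$ and every $S^iF$, as well as at the leading $\overline{F}$ and $F$. What remains is to show that the newly inserted diagonal arrows $S^iF \to R^iF \to \overline{F} \circ \Sigma^{i+1}$ of \eqref{D:circ-diag-extended}---induced by universal properties of kernels and cokernels applied to $F$ of the epi-mono factorizations $d_i = m_i e_i$---compose to zero and splice correctly with the iterated exact pieces. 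The Parallelogram Lemma is tailored for this: each diagonal arrow passes through an epi $F(\Sigma^i B) \twoheadrightarrow S^iF(B)$ at one end, or a mono $\overline{F}(\Sigma^{i+1} B) \hookrightarrow F(\Sigma^{i+1} B)$ at the other, so part (a) or (b) of the lemma transfers the zero-compositions and kernel/image identifications already present in \eqref{D:7-term-cx} to the new peripheral sequence in which the non-functorial $B_2, L$ have been replaced by $R^iF$. The main obstacle is the diagram chase confirming that any residual failure of exactness is concentrated precisely at the $R^iF$-positions; this reflects the failure in general of the canonical factorization $F(\Sigma^i B) \to Z_{i+1}$ to be epic.

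Part (2) is then a clean consequence of (1). Half-exactness of $F$ applied to the short exact sequences $0 \to \Sigma^i B \to I^i \to \Sigma^{i+1} B \to 0$ forces the canonical map $F(\Sigma^i B) \to Z_{i+1}$ to be epic, which fills in exactly the obstruction identified in (1) at each $R^iF$. A final application of the Parallelogram Lemma, combined with the complex structure already established in (1), upgrades the fundamental sequence to the claimed long exact sequence.
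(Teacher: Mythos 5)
Items (3), (4), and (5) are handled correctly and follow essentially the same lines as the paper. The general plan for (1) — iterate the seven-term construction and then replace the non-functorial $B_2,L$ by $R^iF$ — is also the right idea. However, the tool you invoke there is wrong. The Parallelogram Lemma \emph{assumes} that both peripheral composites are zero; it cannot be used to ``transfer zero-compositions'', only to transfer exactness once the complex structure is already known. The paper establishes that~\eqref{D:rfs-co-fun} is a complex by a direct diagram chase, and then proves exactness at the $\overline{F}\circ\Sigma^i$ and $S^iF$ positions by recognizing those fragments as pieces of the circular (Ker--Cok) sequence for the triangle $F(I^{i-1}), F(\Sigma^i B), Z_i$, using that $\kappa_i$ is monic (so the circular sequence of $(F(e_{i-1}), F(\Sigma^{i-1}B)\to Z_i)$ agrees with that of $(F(e_{i-1}), F(m_{i-1}))$ at the relevant terms) and that $Z_i\twoheadrightarrow R^iF(B)$ is epic (so the two have the same image in $\overline{F}(\Sigma^{i+1}B)$). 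Your parallelograms are not the ones the paper uses, and the epi $\pi$ and mono $\iota$ you mention play a different role: in the paper they are used in part (2), after the Parallelogram Lemma has been applied, to pass from the auxiliary peripheral sequences to the actual rows of the fundamental sequence.

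In (2) there is a concrete error. You assert that half-exactness of $F$, applied to $0\to\Sigma^iB\to I^i\to\Sigma^{i+1}B\to 0$, makes the canonical factorization $F(\Sigma^i B)\to Z_i$ (your index $Z_{i+1}$ is off by one) an epimorphism. That map has image $\Imr F(m_{i-1})$, while $Z_i=\Ker F(d_i)=\Ker\bigl(F(m_i)F(e_i)\bigr)$; half-exactness only gives $\Imr F(m_{i-1})=\Ker F(e_i)$, and $\Ker F(e_i)\subsetneq\Ker\bigl(F(m_i)F(e_i)\bigr)$ whenever $\overline{F}(\Sigma^{i+1}B)=\Ker F(m_i)$ is nonzero — which is exactly the generic situation when $F$ is not left-exact. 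So the map need not be epic. What half-exactness actually yields is the strictly weaker fact that $F(\Sigma^{i-1}B)\to Z_i\to\overline{F}(\Sigma^{i+1}B)$ is exact at $Z_i$ (because $\Ker F(e_i)\to Z_i\to\overline{F}(\Sigma^{i+1}B)$ is a fragment of the circular sequence for the pair $(F(e_i),F(m_i))$, and half-exactness identifies $\Ker F(e_i)$ with the image of $F(\Sigma^{i-1}B)$). That weaker fact is precisely the hypothesis the Parallelogram Lemma (a) needs, together with the epimorphism $Z_i\twoheadrightarrow R^iF(B)$, to deduce exactness of the lower peripheral at $R^iF(B)$, and then the epi $\pi\colon F(\Sigma^{i}B)\twoheadrightarrow S^{i}F(B)$ finishes the argument. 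As written, your proof of (2) rests on a false epimorphism claim and does not go through; the correct and sufficient input is the image-kernel identification above.
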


\begin{proof}
(1) The claim that~~\eqref{D:rfs-co-fun} is a complex is easily checked by a diagram chase of arrows (with or without elements) in~\eqref{D:circ-diag-extended} and is left to the reader as an exercise. 

For the exactness claim, we have to show the exactness at the first two terms of each row. For the first row, this is true by the definition of the injective stabilization. 

For the second row, since $\kappa_{1}$ is monic, it is easy to check that the fragment of~\eqref{D:rfs-co} connecting $R^{0}F$ and $R^{1}F$ is part of the circular sequence of the triangle $F(I^{0}), F(\Sigma B), Z_{1}$ in the diagram~\eqref{D:circ-diag-extended}, and we are done by the exactness of the circular sequence. 

For the third row, since $\kappa_{2}$ is monic, we have the fragment 
\[
Z_{1} \lra \overline{F}(\Sigma^{2} B) \lra S^{1}F(\Sigma B) \lra R^{2}F(B)
\] 
of the circular sequence of the triangle $F(I^{1}), F(\Sigma^{2} B), Z^{2}$. Since 
$Z_{1} \lra R^{1}F(B)$ is epic, these two modules have the same image in 
$\overline{F}(\Sigma^{2} B)$, and this yields the desired result for the third row. The same argument applies to the remaining rows.

(2) The top part of~\eqref{D:circ-diag-extended} yields the commutative diagram
\[
\begin{tikzcd}
 F(B) \ar[r] \ar[rd]
 & R^{0}F(B) \ar[d, tail] \ar[rd, "\iota \beta_{F}"] \ar[r, "\beta_{F}"]
 & \overline{F}(\Sigma B) \ar[d, "\iota", tail]
 \\
 & F(I^{0}) \ar[r]
 & F(\Sigma B)
\end{tikzcd}
\]
If $F$ is half-exact, the lower peripheral sequence is exact at $F(I^{0})$. By the parallelogram lemma, b), the top peripheral sequence is exact at $R^{0}F(B)$. Since $\iota$ is monic, the top row is also exact at that spot, which is the desired claim for $R^{0}F$.

To prove the exactness at $R^{1}F$, we examine the commutative diagram
\[
\begin{tikzcd}
 F(\Sigma B) \ar[r] \ar[rd, "\eta \pi"] \ar[d, two heads, "\pi"']
 & Z_{1} \ar[d, two heads] \ar[rd]
 \\
 S^{1}F(B) \ar[r, "\eta"']
 & R^{1}F(B) \ar[r]
 & \overline{F}(\Sigma^{2} B)
\end{tikzcd}
\]
The circular sequence for the composable pair $F(e_{1}), F(m_{1})$ produces 
a sequence 
\[
\Ker F(e_{1}) \lra Z_{1} \lra \overline{F}(\Sigma^{2} B),
\]
which is exact at $Z_{1}$. If $F$ is half-exact, then $\Ker F(e_{1}) = \Imr F(m)$, which shows that the top peripheral sequence in the parallelogram above becomes exact at $Z_{1}$. By the parallelogram lemma, a), the lower peripheral sequence is exact at $R^{1}F(B)$. Since $\pi$  is epic, the bottom row is exact at the same spot, which is the desired claim for $R^{1}F$. The same argument applies to the remaining $R^{i}F$.

(3) If $F$ is epi-preserving, then $S^{1}F = 0$, and we are done by the exactness of the circular sequence of the diagram~\eqref{D:circ-diag} at $\overline{F}(\Sigma B)$.

(4) If $F$ is mono-preserving, then $\overline{F} = 0$, and we are done by (1).

(5) This is well-known.
\end{proof}

\begin{corollary}\label{C:connect-iso}
If $F$ is a right-exact functor, then the canonical natural transformation $R^{i}F \lra \overline{F} \circ \Sigma^{i+1}$ is an isomorphism for each $i \geq 1$. 
\end{corollary}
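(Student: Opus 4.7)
The plan is to read off the isomorphism directly from the right fundamental sequence~\eqref{D:rfs-co-fun} by showing that the satellite terms vanish whenever $F$ is right-exact. Since right-exactness implies half-exactness, Theorem~\ref{T:co-fund-right}(2) already tells us the whole sequence is exact, so the only remaining task is to identify the pieces at the relevant spots.

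First, I would argue that $S^{1}F=0$ when $F$ is right-exact. By construction, $S^{1}F(B)=\Cok F(e)$, where $e\colon I^{0}\to\Sigma B$ is the cokernel of the injection $i\colon B\to I^{0}$. Right-exactness of $F$ means $F$ preserves cokernels, so $F(e)\colon F(I^{0})\to F(\Sigma B)$ is still the cokernel of $F(i)$; in particular it is epic, and hence $\Cok F(e)=0$. Because the sequence~\eqref{D:rfs-co-fun} is obtained from~\eqref{D:rfs-co} by dimension shift, the terms labelled $S^{i}F$ in~\eqref{D:rfs-co-fun} are the functors $S^{1}F\circ\Sigma^{i-1}$. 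All of them therefore vanish.

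With $S^{i}F=0$ for every $i\ge 1$, the exact sequence~\eqref{D:rfs-co-fun} contains, for each $i\ge 1$, a fragment of the form
\[
0=S^{i}F\lra R^{i}F\overset{\beta}{\lra}\overline{F}\circ\Sigma^{i+1}\lra S^{i+1}F=0,
\]
whose exactness at $R^{i}F$ and at $\overline{F}\circ\Sigma^{i+1}$ forces the middle arrow to be both monic and epic, hence an isomorphism. This is precisely the canonical natural transformation produced by the construction of the fundamental sequence, yielding the claim.

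I do not expect a genuine obstacle here; the only point that requires mild care is to verify that the map identified as an isomorphism in the exact sequence is indeed the canonical comparison $R^{i}F\to\overline{F}\circ\Sigma^{i+1}$, but this is immediate from the way this arrow was constructed via the dashed maps in diagram~\eqref{D:circ-diag-extended}. The case $i=0$ cannot be included because the exact sequence at $R^{0}F$ still has the nontrivial image of $\rho_{F}$ coming in from $F$, so $\beta_{F}$ is generally only epic (as confirmed by Theorem~\ref{T:co-fund-right}(3)) and not monic unless $F$ is also left-exact.
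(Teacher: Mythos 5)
Your proof is correct and takes essentially the same route as the paper's: the paper simply observes that right-exactness kills all the right satellites and then invokes exactness of the fundamental sequence. Your write-up spells out the standard facts (that $S^{1}F(B)=\Cok F(e)$ vanishes because $F$ preserves the cokernel diagram, and that the higher satellites are obtained from $S^{1}F$ by the cosyzygy dimension shift), but these are exactly the implicit steps behind the paper's terse "Since $F$ is right-exact, $S^{i}F=0$ for each $i\geq 1$."
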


\begin{proof}
 Since $F$ is right-exact, $S^{i}F = 0$ for each $i \geq 1$. The result now follows from the exactness of the fundamental sequence.
\end{proof}

\begin{remark}
 Suppose $F$ is left-exact but not exact. Then the fundamental sequence is exact by~(2) and $\rho_{F}$ is an isomorphism by~(5). It follows that the converse of (3) is not true in general.
\end{remark}

\begin{example}\label{EG:tensor-product}
Continue to assume that $\mathscr{A}$ is the category of left $\Lambda$-modules over a ring~$\Lambda$. Let $A$ be a right $\Lambda$-module and $F := A \otimes \blank$. The sub-stabilization of $A \otimes \blank$ is denoted by $A \ot \blank$ (see~\cite{MR-3, MR-2, MR-1}, where it was called the injective stabilization of the tensor product, for an extensive treatment of this concept). The initial segment of the right fundamental sequence~\eqref{D:rfs-co-fun} becomes
 \begin{equation}\label{Eq: 4-term for tensor product}
{0 \lra A \ot \blank \lra A \otimes \blank \lra R^{0}(A \otimes \blank)  \lra A \ot (\Sigma \blank) \lra 0.}
\end{equation}
By the preceding theorem, it is exact. Evaluating it on  $\prescript{}{\Lambda}\Lambda$, we have another exact sequence (in the category of right $\Lambda$-modules)
 \[
 0 \lra \injt(A) \lra A  \lra R^{0}(A \otimes \blank)(\Lambda) \lra A \ot \Sigma \Lambda \lra 0,
 \] 
 where $\injt$ denotes the torsion radical introduced in~\cite{MR-2}.
\end{example}
\begin{corollary}
 Suppose $\Lambda$ is left hereditary. Then $R^{0}(A \otimes \blank) \simeq I(A\otimes \blank)$, the quotient functor of $A \otimes \blank$ by the largest subfunctor vanishing on injectives, and the sequence~\eqref{Eq: 4-term for tensor product} becomes
 \[
 0 \lra A \ot \blank \lra A \otimes \blank \lra I(A \otimes \blank) \lra  0.
 \]
 In particular, $I(A \otimes \blank)$ is an exact functor.\footnote{The functor $I(A \otimes \blank)$ is not as mysterious as it may seem. Without any assumption 
 on~$\Lambda$ or on $A$, its value on a left module $B$ can be computed by applying $A\otimes \blank$ to a cosyzygy sequence 
 $0 \to B \to I \to \Sigma B \to 0$ and taking the kernel of the induced map $A \otimes I \to A \otimes \Sigma B$.}
\end{corollary}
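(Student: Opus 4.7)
The plan is to exploit the fact that, over a left hereditary ring, every cosyzygy module is injective, and then feed this vanishing back into the right fundamental sequence of Theorem~\ref{T:co-fund-right}. Concretely, since the left global (hence injective) dimension of $\Lambda$ is at most one, any module $B$ admits an injective resolution of the form $0 \to B \to I^{0} \to I^{1} \to 0$, so $\Sigma B \simeq I^{1}$ is injective; by Schanuel's lemma $\Sigma B$ is zero modulo injectives. Since the sub-stabilization vanishes on injectives, this yields $\overline{F} \circ \Sigma^{i} = 0$ for every $i \geq 1$ and every additive functor $F$.

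Now specialize to $F := A \otimes \blank$. Being right-exact, $F$ has $S^{i}F = 0$ for all $i \geq 1$, and is in particular half-exact, so Theorem~\ref{T:co-fund-right}(2) makes the right fundamental sequence~\eqref{D:rfs-co-fun} exact. Substituting both vanishings collapses~\eqref{Eq: 4-term for tensor product} to
\[
0 \lra A \ot \blank \lra A \otimes \blank \lra R^{0}(A \otimes \blank) \lra 0,
\]
while every subsequent three-term segment of the fundamental sequence reads $0 \to 0 \to R^{i}F \to 0$ and hence forces $R^{i}F = 0$ for $i \geq 1$. Because $A \ot \blank = \overline{F}$ is, by construction, the largest subfunctor of $F$ vanishing on injectives, the quotient $F/\overline{F}$ is by definition $I(A \otimes \blank)$, which gives the claimed isomorphism $R^{0}(A \otimes \blank) \simeq I(A \otimes \blank)$ together with the displayed short exact sequence.

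It remains to check that $I(A \otimes \blank)$ is exact. For this, one invokes the usual construction of the long exact sequence for the $R^{i}F$: compatible injective resolutions of the terms of a short exact sequence $0 \to B' \to B \to B'' \to 0$ fit into a term-wise split short exact sequence of complexes, so $F$ preserves exactness term-by-term and one obtains the long exact sequence. Hence $R^{0}F$ is always left-exact, and combined with $R^{1}F = 0$ the tail $R^{0}F(B) \to R^{0}F(B'') \to R^{1}F(B') = 0$ gives right-exactness as well. The only non-mechanical step is the opening observation that hereditary implies all cosyzygies are injective; everything else is an unwinding of the fundamental sequence.
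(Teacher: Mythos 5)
Your argument is correct. For the identification $R^{0}(A \otimes \blank) \simeq I(A \otimes \blank)$ you follow exactly the paper's route: hereditary implies cosyzygies are injective, hence $A \ot \Sigma\blank = 0$, and the exactness of the fundamental sequence (valid since $A \otimes \blank$ is half-exact) collapses the four-term sequence~\eqref{Eq: 4-term for tensor product} to a short one; since $\overline{F}$ is the largest subfunctor vanishing on injectives, $F/\overline{F} = I(F) \simeq R^{0}F$.

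Where you genuinely diverge from the paper is in proving exactness of $I(A \otimes \blank)$. You deduce $R^{1}F = 0$ from the (now exact) fundamental sequence — which is a special case of Corollary~\ref{C:connect-iso} combined with the hereditary hypothesis — and then appeal to the long exact sequence of right-derived functors (horseshoe lemma plus additivity, so the argument is self-contained) to get right-exactness of $R^{0}F$, while left-exactness is automatic. The paper instead observes that $I(A\otimes\blank)$, as a quotient of the right-exact functor $A \otimes \blank$, preserves epimorphisms, and is left-exact because it is a zeroth right-derived functor; this avoids both $R^{1}F = 0$ and the long-exact-sequence machinery. Your version is correct and makes the role of the vanishing higher derived functors explicit, at the cost of invoking heavier tools; the paper's is more economical. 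One small stylistic note: invoking Schanuel's lemma to conclude $\Sigma B$ is injective is a detour — over a left hereditary ring, quotients of injectives are injective, and $\Sigma B$ is such a quotient directly.
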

\begin{proof}
  Since the cosyzygy module of any module is injective, $A \ot (\Sigma_{\blank}) = 0$. Since the fundamental sequence is exact, we then have $R^{0}(A \otimes \blank) \simeq I(A \otimes \blank)$. To show that  $I(A \otimes \blank)$ is exact, we first notice that it preserves epimorphisms because it is a quotient of the right-exact functor $A \otimes \blank$. Being a zeroth right-derived functor, it is also left-exact, and therefore exact.
\end{proof}

\subsection{The case of a finitely presented functor}\label{SS:fpf}
Next, we want to impose conditions on the functor $F$ in order to obtain more precise information on the right fundamental sequence. We start with the assumption that $F$ is finitely presented, i.e., there is a module homomorphism   $f: A \to B$ and an exact sequence
\[
(B, \blank) \overset{(f, \blankd)} \lra (A, \blank) \lra F \lra 0.
\]
Notice that, by Yoneda's lemma, any morphism between the first two terms is induced by some $f : A \lra B$. The following construct (without a name) was introduced in~\cite{A66}.
\begin{definition}
 $w(F) := \Ker f$ is called the \texttt{defect} of $F$.
\end{definition}
The defining exact sequence
\[
0 \lra w(F) \lra A \overset{f}\lra B,
\]
gives rise to a commutative diagram 
\[
\xymatrix
	{
	(B, \blank) \ar[r]^{(f, \blank)}
	& (A, \blank) \ar[r] \ar[rd]
	& F \ar[r] \ar@{-->}[d]^{\rho_{F}}
	& 0
\\
	&
	& (w(F), \blank),
	}
\] 
where the dashed arrow is defined by the universal property of cokernels. By the uniqueness of cokernels, $\rho_{F}$ is an isomorphism on any injective module. As $(w(F), \blank)$ is left-exact, the universal property of the zeroth right-derived functors shows that~$\rho_{F}$ is the zeroth right-derived transformation of $F$ and 
\[
R^{0}F \simeq (w(F), \blank).
\]
The last isomorphism and the Yoneda lemma show that the isomorphism class of $w(F)$ is independent of the chosen presentation of $F$. As another consequence, the right fundamental sequence~\eqref{D:7-term-cx} rewrites as
\begin{equation}\label{D:7-term-cx-fp}
\begin{tikzcd}
 0 \ar[r]
 & \overline{F} \ar[r, "k"] 
 & F \ar[r, "\rho_{F}"]  
 & (w(F), \blank) \ar[r, "\beta_{F}"] 
 & \overline{F} \circ \Sigma  \ar[r]
 & S^{1}F \ar[r]
 & \ldots
 \end{tikzcd}
\end{equation}

Since the $R^{i}F$ are defined in terms of injectives and 
$\rho_{F} : F \to R^{0}F$ is an isomorphism on injectives, $R^{i} \rho_{F}$ is an isomorphism for all $i$. Thus we have

\begin{proposition}
 If $F$ is a finitely presented covariant functor, then 
 \[
 R^{i} \rho_{F} : R^{i}F \lra \Ext^{i}(w(F), \blank)
 \]
is an isomorphism for all $i$. \qed
\end{proposition}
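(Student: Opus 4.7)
The argument is essentially a standard consequence of the comparison principle for right-derived functors: if a natural transformation is an isomorphism on injectives, it induces isomorphisms on all higher right-derived functors. The three ingredients have already been assembled in the paragraphs preceding the statement, namely (i) the identification $R^{0}F \simeq (w(F), \blank)$, (ii) the fact that $\rho_{F}$ is an isomorphism on every injective module (by the uniqueness of cokernels applied to the diagram defining $\rho_{F}$), and (iii) the very definition of $R^{i}F(B)$ as the $-i$th homology of the complex obtained by applying $F$ to an injective resolution of $B$.

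The plan is as follows. Fix $B \in \mathcal{A}$ and pick any injective resolution
\[
0 \lra B \lra I^{0} \overset{d^{0}}\lra I^{1} \overset{d^{1}}\lra I^{2} \lra \ldots
\]
Applying $F$ and $R^{0}F \simeq (w(F), \blank)$ termwise and using naturality of $\rho_{F}$ yields a commutative ladder of cochain complexes
\[
\begin{tikzcd}[cramped, sep=small]
F(I^{0}) \ar[r, "F(d^{0})"] \ar[d, "\rho_{F}(I^{0})"']
 & F(I^{1}) \ar[r, "F(d^{1})"] \ar[d, "\rho_{F}(I^{1})"']
 & F(I^{2}) \ar[r] \ar[d, "\rho_{F}(I^{2})"']
 & \ldots
\\
(w(F), I^{0}) \ar[r]
 & (w(F), I^{1}) \ar[r]
 & (w(F), I^{2}) \ar[r]
 & \ldots
\end{tikzcd}
\]
By (ii), each vertical arrow $\rho_{F}(I^{j})$ is an isomorphism, so this is an isomorphism of complexes. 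Consequently it induces isomorphisms on cohomology in every degree. By (iii), the cohomology of the top row computes $R^{i}F(B)$, and the cohomology of the bottom row computes $\Ext^{i}(w(F), B)$ (using any injective resolution, since $(w(F),\blank)$ is left-exact). The induced map on cohomology is precisely $R^{i}\rho_{F}(B)$ by construction of the derived transformation.

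Naturality in $B$ follows from the standard comparison argument between injective resolutions, so the resulting isomorphism is a natural isomorphism of functors $R^{i}F \simeq \Ext^{i}(w(F), \blank)$. There is no real obstacle here; the only point requiring mild care is confirming that the map obtained on cohomology coincides with the canonical derived transformation $R^{i}\rho_{F}$ rather than merely some isomorphism, and this is immediate from the universal property defining $R^{i}\rho_{F}$ via the chosen resolution.
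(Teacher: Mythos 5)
Your proposal is correct and is essentially the paper's own argument, spelled out in more detail: the paper states in one sentence that since the $R^{i}F$ are computed from injective resolutions and $\rho_{F}$ is an isomorphism on injectives, $R^{i}\rho_{F}$ is an isomorphism for all $i$, which is precisely the ladder-of-complexes reasoning you make explicit.
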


\begin{example}
 We again set $F := A \otimes \blank$ and assume that $A$ is \texttt{finitely presented}, i.e., there is an exact sequence 
 \[
 P_{1} \overset{d}\lra P_{0} \overset{p}\lra A \lra 0,
 \]
with $P_{0}$ and $P_{1}$ finitely generated projectives. This sequence gives rise to an exact sequence of functors
 \[
 P_{1} \otimes \blank \overset{d \otimes \blankd}\lra P_{0} \otimes \blank \lra A\otimes \blank \lra 0,
 \]
which is isomorphic to the sequence
\[
 (P_{1}^{\ast}, \blank) \overset{(d^{\ast}, \blankd)}\lra (P_{0}^{\ast}, \blank) \lra A \otimes \blank \lra 0.
 \]
Thus $A \otimes \blank$ is finitely presented.
To compute its defect, we use the defining exact sequence for the transpose of $A$
\begin{equation}\label{D:Tr}
0 \lra A^{\ast} \overset{p^{\ast}}\lra P_{0}^{\ast} \overset{d^{\ast}}\lra P_{1}^{\ast} \lra \Tr A \lra 0,
\end{equation}
which shows that $w(A \otimes \blank) \simeq A^{\ast}$. As a result, the exact sequence~\eqref{Eq: 4-term for tensor product} becomes
\begin{equation}\label{Eq:4-term-tensor-half-explicit}
\xymatrix
	{
	0 \ar[r]
	& A \ot \blank \ar[r]
	& A \otimes \blank \ar[r]^{\rho_{A \otimes \blankd}}
	& (A^{\ast}, \blank) \ar[r]
	& A \ot \Sigma \blank \ar[r]
	& 0.
	}
\end{equation}
One can also check that $\rho_{A \otimes \blank}$ evaluates on a module $B$ according to the rule
\[
a \otimes b \mapsto l : f \mapsto f(a)b.
\]
To further explicate the structure of \eqref{Eq:4-term-tensor-half-explicit}, we compute $A \ot \blank$. The foregoing calculations produce a commutative diagram with an exact row
\[
\xymatrix
	{
	0 \ar[r]
	& (\Tr A, \blank) \ar[r]
	& (P_{1}^{\ast}, \blank) \ar[r]^{(d^{\ast}, \blankd)}
	& (P_{0}^{\ast}, \blank) \ar[r]^{\pi} \ar[rd]_{(p^{\ast}, \blankd)}
	& A \otimes \blank \ar[r] \ar[d]^{\rho_{A \otimes \blankd}}
	& 0
\\
	&
	&
	&
	& (A^{\ast}, \blank)
	}
\]
The circular sequence for the composable morphisms $\pi$ and $\rho_{A \otimes \blankd}$ yields a short exact sequence 
\[
0 \lra \Ker \pi \lra \Ker (p^{\ast}, \blank) \lra 
\Ker \rho_{A \otimes \blankd} \lra 0
\]
Notice that $\Ker \pi = \Imr (d^{\ast}, \blank)$ and $\Ker \rho_{A \otimes \blankd} = A \ot \blank$. To identify the middle term, choose an epimorphism $q : Q \twoheadrightarrow A^{\ast}$ with $Q$ projective. This gives the first three terms of a projective resolution 
\begin{equation}
\ldots \lra Q \overset{p^{\ast}q}\lra P_{0}^{\ast} \overset{d^{\ast}}\lra P_{1}^{\ast} \lra \Tr A \lra 0
\end{equation}
of  $\Tr A$, which is enough for computing $\Ext^{1}(\Tr A, \blank)$. Since $(q, \blank)$ is monic, $\Ker(p^{\ast}q, \blank) = \Ker(p^{\ast}, \blank)$, and we recover the well-known isomorphism.
\[
\Ext^{1}(\Tr A, \blank) \simeq \Ker (p^{\ast}, \blank)/\Imr (d^{\ast}, \blankd) \simeq A \ot \blank.
\]
Finally, using dimension shift on the covariant argument of $\Ext$, we rewrite~\eqref{Eq:4-term-tensor-half-explicit} in a more explicit form
\begin{equation}\label{Eq:4-term-tensor-explicit}
0 \lra \Ext^{1}(\Tr A, \blank) \lra A \otimes \blank 
\overset{\rho_{A \otimes \blankd}}\lra (A^{\ast}, \blank) \lra
\Ext^{2}(\Tr A, \blank) \lra 0,
\end{equation}
thus recovering Auslander's exact sequence~\eqref{D:ext}.
\end{example}

\section{Example: half-exact finitely presented functors on modules over left-hereditary rings}\label{S:left-her}

In this section, we assume  that $\Lambda$ if left-hereditary, and, 
for illustrative purposes, give a streamlined exposition of~\cite[Corollary~4.14]{A66}. 

Since a cosyzygy module of any module is now injective, and, since the sub-stabilization of a covariant functor vanishes on injectives, the right fundamental sequence~\eqref{D:7-term-cx} rewrites as
\begin{equation}
\xymatrix
	{
	0 \ar[r]
	& \overline{F} \ar[r]
	& F \ar[r]^{\rho_{F}}
	& R^{0}F \ar[r]
	& 0.
	}
\end{equation}
Assume, furthermore, that $F$ is half-exact. In that case, Theorem~\ref{T:co-fund-right}, (2) shows that this sequence is exact. Finally, assume that $F$ is finitely presented. Then $R^{0}F \simeq (w(F), \blank)$ and we have a short exact sequence 
\begin{equation}\label{D:ses}
\xymatrix
	{
	0 \ar[r]
	& \overline{F} \ar[r]
	& F \ar[r]^>>>>{\rho_{F}}
	& (w(F), \blank) \ar[r]
	& 0.
	}
\end{equation}
We remark that $\overline{F}$, being the kernel of a natural transformation between finitely presented functors, is itself finitely presented.\footnote{For a detailed proof of this fact, see~\cite[Proposition~3.1, 2)]{A82}.} Thus, we have a short exact sequence in the abelian category of finitely presented functors. Since the last term of the sequence is projective (by Yoneda's lemma), it splits off and we have a direct sum decomposition $F \simeq \overline{F} \coprod (w(F), \blank)$. Notice that $F$ determines both summands uniquely up to isomorphism. 

We also recall that the sub-stabilization of a half-exact functor is itself half-exact.\footnote{For a quick direct proof of this fact, see~\cite[Lemma 7.1]{MR-1}. For a more conceptual proof, see~\cite[Remark~(1.9)]{AB}. } It is not difficult to see that a finitely presented functor vanishes on injectives if and only if its defect is zero. Thus the defect of $\overline{F}$ is zero. Another result of Auslander~\cite[Proposition~4.8]{A66} then shows that $\overline{F}$ is isomorphic to $\Ext^{1}(D,\blank)$ for a suitable module~$D$. As a consequence, any half-exact finitely presented functor  $F$ on the category of left $\Lambda$-modules is of the form 
$\Ext^{1}(D, \blank) \coprod (E, \blank)$, where $E \simeq w(F)$. If a presentation of $F$ is known, say  
\[
(B, \blank) \overset{(f, \blank)} \lra (A, \blank) \lra F \lra 0,
\]
then we can determine a projective resolution of $\overline{F} \simeq \Ext^{1}(D, \blank)$, without knowing the module $D$. To this end, compute the epi-mono factorization of $f$ together with its kernel and cokernel: 
\begin{equation}\label{D:defect}
\begin{gathered}
\xymatrix
	{
	0 \ar[r]
	& w(F) \ar[r]^{l}
	& A \ar[rr]^{f} \ar@{->>}[rd]^{p}
	&
	& B \ar[r]
	& C \ar[r]
	& 0
\\
	& 
	&
	& \Imr f \ar@{>->}[ru]^{i}
	}
\end{gathered}
\end{equation}
Then 
\begin{equation}\label{D:Fsub}
\xymatrix
	{
	0 \ar[r]
	& (C,-) \ar[r]
	& (B,\blank) \ar[r]^{(i, \blank)} 
	& (\Imr f, \blank) \ar[r] 
	& \overline{F} \ar[r]
	& 0,
	}
\end{equation}
is a projective resolution of $\overline{F}$ (see, for example,~\cite[Proposition~6.1, (3)]{MR-1}).\footnote{This result holds for any finitely presented functor over any ring.}

To end this section, we  address the question of the uniqueness of the  decomposition of $F$. The uniqueness does hold and is a consequence of the following general result~\cite[Proposition~4.19]{MR-1}. Before stating it, recall that a functor is said to be injectively stable if it vanishes on injectives. We will be looking at the (large) category of additive functors on an abelian category with values in abelian groups. Assume also that the category in question has enough injectives. We then have

\begin{proposition}
 Let $\mathscr{T}$ be the class of injectively stable covariant functors and 
 $\mathscr{F}$ the class of mono-preserving covariant functors. Then 
 $(\mathscr{T}, \mathscr{F})$ is a hereditary torsion theory in the (large) category of additive covariant functors. \qed
\end{proposition}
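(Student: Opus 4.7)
The plan is to verify the four defining properties of a hereditary torsion theory: the Hom-orthogonality $\Hom(T,F)=0$ for $T\in\mathscr{T}$ and $F\in\mathscr{F}$; the existence, for every additive covariant functor $G$, of a short exact sequence $0\to T \to G \to F \to 0$ with $T\in\mathscr{T}$ and $F\in\mathscr{F}$; closure of $\mathscr{T}$ under subfunctors, quotients, coproducts, and extensions; and closure of $\mathscr{F}$ under subfunctors, products, and extensions. Heredity is precisely the subfunctor-closure of $\mathscr{T}$.

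For orthogonality, let $\eta:T\lra F$ be a natural transformation with $T\in\mathscr{T}$ and $F\in\mathscr{F}$. Given any object $A$, pick an injective container $\iota:A\hookrightarrow I$. Naturality gives $F(\iota)\circ\eta_A = \eta_I\circ T(\iota)$, and since $T(I)=0$ the right-hand side vanishes. Because $F$ preserves monomorphisms, $F(\iota)$ is monic, forcing $\eta_A=0$. As $A$ is arbitrary, $\eta=0$.

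For the torsion--torsion-free decomposition, I would invoke the sub-stabilization already constructed in the paper. The defining sequence~\eqref{E:first-def-seq} exhibits $G$ as an extension of the image $\imr\rho_G$ by $\overline{G}$, and $\overline{G}\in\mathscr{T}$ by construction, as it vanishes on every injective. The quotient $G/\overline{G}$ is isomorphic to $\imr\rho_G$, which sits as a subfunctor of the left-exact functor $R^0 G$; since any subfunctor of a mono-preserving functor is again mono-preserving (the restriction of a monomorphism to a subobject is still monic), we obtain $G/\overline{G}\in\mathscr{F}$.

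The remaining closure properties reduce to pointwise verifications, since short exact sequences of additive functors are evaluated as short exact sequences of abelian groups at each object: the condition $T(I)=0$ is preserved under subfunctors, quotients, coproducts, and extensions (the last by exactness of evaluation); and mono-preservation is preserved under subfunctors, arbitrary products (products of monics are monic), and extensions (a short diagram chase at any fixed monomorphism $A\hookrightarrow B$, using that the induced maps on both outer terms are monic). The main obstacle, in my view, is the verification that $G/\overline{G}\in\mathscr{F}$; the clean route above bypasses a direct check of mono-preservation by embedding $G/\overline{G}$ into the genuinely left-exact $R^0 G$, a reduction that rests entirely on identifying $\overline{G}$ as $\Ker\rho_G$.
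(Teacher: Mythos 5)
Your proof is correct, and there is no in-paper argument to compare it against: the paper cites~\cite[Proposition~4.19]{MR-1} for this statement and supplies no proof of its own, so the $\qed$ merely acknowledges the citation. Your verification is sound at every step. The orthogonality argument (vanishing of $T(I)$ kills $\eta_A$ by factoring through $F(\iota)$, which is monic) is the standard one. The decomposition step is the only place a real idea is needed, and your idea is the right one: $G/\overline{G} \simeq \Imr\rho_G$ sits inside $R^0G$, and $R^0G$ is left-exact (this follows from the horseshoe lemma for injective resolutions and the fact that degreewise split short exact sequences of complexes are preserved by any additive functor), so $\Imr\rho_G$ inherits mono-preservation as a subfunctor. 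This cleanly avoids a direct check on $G/\overline{G}$. The remaining closure verifications are routine pointwise checks, and with orthogonality plus the canonical decomposition in hand, maximality of both classes follows formally, so the pair really is a torsion theory; heredity is just subobject-closure of $\mathscr{T}$, which you observe correctly. One small remark: you list closure properties as defining axioms and verify them separately, but they actually follow automatically from orthogonality plus the existence of the decomposition, so part of your work is redundant --- though harmless.
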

Thus both summands in the decomposition of $F$ are uniquely determined by $F$ in the following sense: if $F$ is a direct sum of an injectively stable functor and a representable functor, then the injectively stable functor is isomorphic to 
$\overline{F}$ and the representable functor is isomorphic to $(w(F), \blank)$.

It remains to determine how unique are the modules $D$ and $E$ in the decomposition $F \simeq \Ext^{1}(D, \blank) \coprod (E, \blank)$. As we just saw, 
$(E, \blank) \simeq \big(w(F), \blank\big)$, and therefore, $E \simeq w(F)$.  On the other hand, the Hilton-Rees theorem shows that $D$ is unique up to projectively stable equivalence.

\begin{remark}
 Returning to an arbitrary (i.e., not necessarily left-hereditary) ring, we  have, by~\cite[Proposition~4.3]{A66}, that defect-zero half-exact finitely presented functors  on the module category are precisely the injective objects in the category of defect-zero finitely presented functors. We also have [idem.] that such functors are precisely  direct summands of the functors $\Ext^{1}(D, \blank)$. It is interesting to revisit this result from a constructive point of view. Over a left hereditary ring, if we know a short exact sequence $0 \to A \to B \to C \to 0$  which gives rise to a defining exact sequence for $F$, 
 \[
 0 \lra (C, \blank) \lra (B, \blank) \lra (A, \blank) \lra F \lra 0,
 \]
then~\cite[Lemma 4.1 and Proposition 4.3]{A66} show that we can set $D := C$. In general, we may ask if there is a ``coordinate-free'' description of $D$. For example, the foregoing argument and the universal coefficient formula for cohomology (see Section~\ref{S:ucfc} below) show that, for $F := \mathrm{H}^{n}(\mathbf{C}, \blank)$, we can set 
\[
D:=  \mathrm{H}_{n-1}(\mathbf{C}) \simeq w(\mathrm{H}^{n}(\mathbf{C}, \blank)).
\] 
While this formula may be viewed as being coordinate-free, it requires the use of a different functor, i.e.,  $\mathrm{H}_{n-1}(\mathbf{C}, \blank)$.

Recall that a direct summand of a covariant $\Ext$ need not be an $\Ext$, as was shown by Auslander in~\cite{A69}, refuting an earlier conjecture of his own. The conjecture is, however, true if the domain category of $F$ has denumerable sums~\cite{Fr66}, or if it is of finite global dimension~\cite[Theorem~4.8]{A66}, or if the fixed argument of $\Ext$ is artinian (or, more generally, is noetherian and has the descending chain condition for the images of nested endomorphisms)~\cite[Proposition~11]{M-2016}. In those cases, $F \simeq \Ext^{1}(D, \blank)$, and one may ask how to recover $D$ from the functor $\Ext^{1}(D, \blank)$.  As~$D$ is defined only up to projectively stable equivalence, one has to look for an answer in the stable category. In that category, the isomorphism type of $D$ is uniquely determined by the isomorphism type of the corresponding representable functor $(\underline{D, \blank})$, and this immediately leads to the desired solution:
\[
(\underline{D, \blank}) \simeq S_{1}\Ext^{1}(D, \blank) \simeq S_{1}F,\footnote{The reader who feels the given answer is not ``constructive'' enough for computational purposes may rethink what it means to compute and start thinking of computing with functors.}
\] 
where $S_{1}$ denotes the left satellite.
\end{remark}

\section{Universal coefficient theorems for cohomology}\label{S:ucfc}

As an illustration of~\eqref{D:7-term-cx-fp} and~\eqref{D:defect} we revisit the universal coefficient formula for cohomology (see also~\cite[Proposition 5.8 and Corollary 5.9]{A66}). Let $\Lambda$ be an arbitrary ring and $\mathbf{C}$ a complex of projectives 
\[
\ldots \lra \mathrm{C}_{n+1} \overset{d_{n+1}}\lra \mathrm{C}_{n} \overset{d_{n}}\lra \mathrm{C}_{n-1} \lra \ldots
\]
with projective boundaries. Thus the cycles are projective, too. We use the standard notation $\mathrm{B, C, Z}$ to denote, respectively, the boundaries, chains, and  cycles of~$\mathbf{C}$. Following~\cite{Bour}, we use the notation $\mathrm{Homgr}$ for the complex of homomorphisms between complexes. To wit, for complexes $(\mathbf{C}, d)$ and $(\mathbf{C}', d')$, the degree $t$ chains of $\mathrm{Homgr}(\mathbf{C}, \mathbf{C}')$ are the elements of $\prod_{i \in \Z}(C_{i}, C_{i+t})$. The differential is defined by $D(f) := d'f - (-1)^{t}fd$, where $f$ is a degree $t$ chain. Thus, $\mathrm{Homgr}$ is a bifunctor from complexes of $\Lambda$-modules to complexes of abelian groups. 

 Given an integer $n$ and viewing (say, left) modules as complexes concentrated in degree 0, we have an additive functor $F := \mathrm{H}^{n}(\mathrm{Homgr}(\mathbf{C}, \blank))$ on left modules with values in abelian groups. Since finitely presented functors are defined as cokernels of natural transformations between representables (see~\ref{SS:fpf}) and since the kernel and the cokernel of a natural transformation between finitely presented functors are also finitely presented~\cite[Proposition~3.1]{A82}, we have that  
 $\mathrm{H}^{n}(\mathrm{Homgr}\big(\mathbf{C}, \blank)\big)$ is always finitely presented. To avoid notational clutter, denote this functor by  $\mathrm{H}^{n}(\mathbf{C}, \blank)$. 

Applying the contravariant Yoneda embedding to the canonical exact sequence
\[
\begin{tikzcd}
 0 \ar[r]
 & \mathrm{H}_{n}(\mathbf{C}) \ar[r]
 & \mathrm{C}_{n}/\mathrm{B}_{n} \ar[r] \ar[dr, two heads]
 & \mathrm{Z}_{n-1} \ar[r]
 & \mathrm{H}_{n-1}(\mathbf{C}) \ar[r]
 & 0
\\
  &
  &
  & B_{n-1} \ar[u, tail]
\end{tikzcd}
\]
we have a left-exact four-term complex of additive functors in the middle row of the commutative diagram of natural transformations
\begin{equation}\label{D:ucfc}
\begin{gathered}
\begin{tikzcd}[cramped, sep=small]
 	&\Ext^{1}(\mathrm{H}_{n-1}(\mathbf{C}), \blank) \ar[dr, dashed]
\\
	& (\mathrm{B}_{n-1}, \blank) \ar[u, two heads, "\gamma"] \ar[dr, tail, "\delta"]
	& \Cok \delta\iota \ar[dr, dashed]
\\
	(\mathrm{H}_{n-1}(\mathbf{C}), \blank) \ar[r, tail]
	& (\mathrm{Z}_{n-1}, \blank) \ar[u, "\iota"] \ar[r, "\delta\iota"] 
	\ar[dr, "\mu\delta\iota"']
	& (\mathrm{C}_{n}/\mathrm{B}_{n}, \blank) \ar[r, two heads, "\varepsilon"] 
	\ar[d, tail, "\mu"]
	\ar[u, two heads]
	& (\mathrm{H}_{n}(\mathbf{C}), \blank)
\\
	&
	& (\mathrm{C}_{n}, \blank) \ar[d, "\nu"]
\\
	&
	& (\mathrm{B}_{n}, \blank)
\end{tikzcd}
\end{gathered}
\end{equation}
Notice that $(\delta, \varepsilon)$ is a split short exact sequence since 
$\mathrm{B}_{n-1}$ is projective by assumption. Likewise, $\gamma = \coker \iota$ since $\mathrm{Z}_{n-1}$ is projective. The dashed arrows arise from the 
circular sequence for~$\delta$ and~$\iota$ and yield a short exact sequence
\[
0 \lra \Ext^{1}(\mathrm{H}_{n-1}(\mathbf{C}), \blank) \lra \Cok \delta\iota \lra
 (\mathrm{H}_{n}(\mathbf{C}), \blank) \lra 0.
\]
From the circular sequence for $\mu$ and $\delta \iota$ we have a short exact sequence 
\[
0 \lra \Cok \delta \iota \lra \Cok \mu \delta  \iota \lra \Cok \mu \lra 0.
\]
By the left-exactness of the Hom functor (applied twice), we have 
\[
\Cok \mu \simeq  (\mathrm{C}_{n}, \blank)/\Imr \mu = 
(\mathrm{C}_{n}, \blank)/\Ker \nu = (\mathrm{C}_{n}, \blank)/\Ker (d_{n+1}, \blank).
\]
 On the other hand, since $B_{n-2}$ is projective, the induced natural transformation 
 $(\mathrm{C}_{n-1},\blank) \lra (\mathrm{Z}_{n-1}, \blank)$ is epic and therefore 
 \[
 \Cok \mu \delta \iota = \Cok (d_{n}, \blank) =  
 (\mathrm{C}_{n}, \blank)/\Imr (d_{n}, \blank).
 \]
 It now follows from the circular sequence for $\delta \iota$ and $\mu$ that $\Cok \delta \iota \simeq \mathrm{H}^{n}(\mathbf{C}, \blank)$, and we have a short exact sequence 
\begin{equation}\label{D:ucf-coh}
 0 \lra 
 \Ext^{1}(\mathrm{H}_{n-1}(\mathbf{C}), \blank) \lra 
 \mathrm{H}^{n}(\mathbf{C}, \blank) \lra
 (\mathrm{H}_{n}(\mathbf{C}), \blank) 
 \lra 0.
 \end{equation}
Because $\varepsilon$ is split, the same is true, by the commutativity of the rightmost triangle, for the last map in this sequence.\footnote{Since this splitting arises from the splittings inside the original complex $\mathbf{C}$, which could be chosen arbitrarily,  there is no reason for it be functorial in~$\mathbf{C}$. However, if the terms of the resulting formula are viewed as functors on the coefficients and not on complexes, then a functorial splitting appears because representable functors are projective objects in the functor category.} We also notice that this map is an isomorphism on injectives. Together with the fact that 
$(\mathrm{H}_{n}(\mathbf{C}), \blank)$ is left-exact, this yields  isomorphisms
\[
(\mathrm{H}_{n}(\mathbf{C}), \blank) \simeq R^{0}\mathrm{H}^{n}(\mathbf{C}, \blank)
\quad \textrm{and} \quad \overline{\mathrm{H}^{n}(\mathbf{C}, \blank)} \simeq
\Ext^{1}(\mathrm{H}_{n-1}(\mathbf{C}), \blank).
\]
Comparison of the above short exact sequence with the fundamental 
sequence~\eqref{D:7-term-cx} shows that $\beta_{F} : R^{0}F \lra \overline{F} \circ \Sigma$ is zero in this case. On the other hand, it is easy to construct complexes over rings of global dimension at least 2 such that 
$\overline{F} \circ \Sigma = \Ext^{2}(\mathrm{H}_{n-1}(\mathbf{C}), \blank)$ is not  zero.

Finally, we remark that, since representable functors are projective objects in the functor category, the sequence~\eqref{D:ucf-coh} splits. In other words, the classical universal coefficient formula splits functorially. 
\medskip

A different approach to universal coefficients for cohomology is based on the already mentioned fact that $\mathrm{H}^{n}(\mathbf{C}, \blank) \simeq \Cok \delta \iota$ is finitely presented. An explicit projective resolution of 
$\mathrm{H}^{n}(\mathbf{C}, \blank)$ is embedded in the diagram~\eqref{D:ucfc}:
 \[
 0 \lra (\mathrm{H}_{n-1}(\mathbf{C}), \blank) \lra (\mathrm{Z}_{n-1}, \blank) \lra  (\mathrm{C}_{n}/\mathrm{B}_{n}, \blank) \lra \mathrm{H}^{n}(\mathbf{C}, \blank) \lra 0.
 \]
It was produced under the assumption that $\mathbf{C}$ was a complex of projectives with projective boundaries. However, the cohomology functor is finitely presented without any assumptions on the complex. This fact allows, as we shall now see, to establish a universal coefficient theorem for cohomology in a completely  general context. Indeed, straight from the definition of cohomology, we have a projective resolution
\[
 0 \lra (\mathrm{C}_{n-1}/\mathrm{B}_{n-1}, \blank) \lra (\mathrm{C}_{n-1}, \blank) \lra  (\mathrm{C}_{n}/\mathrm{B}_{n}, \blank) \lra \mathrm{H}^{n}(\mathbf{C}, \blank) \lra 0
\]
of the $n$-th cohomology functor. According to~\eqref{D:defect}, this yields the defect of the cohomology functor and all of its right-derived functors.
\begin{lemma}
Let $\mathbf{C} : 
\ldots \lra \mathrm{C}_{n+1} \overset{d_{n+1}}\lra \mathrm{C}_{n} \overset{d_{n}}\lra \mathrm{C}_{n-1} \lra \ldots$ be an arbitrary complex. Then
 \[
w(\mathrm{H}^{n}(\mathbf{C}, \blank)) \simeq \Ker (\mathrm{C}_{n}/\mathrm{B}_{n} \to \mathrm{C}_{n-1}) \simeq \mathrm{H}_{n}(\mathbf{C}). 
\]
As a consequence, 
\[
R^{i}(\mathrm{H}^{n}(\mathbf{C}, \blank)) \simeq \Ext^{i}(\mathrm{H}_{n}(\mathbf{C}), \blank)
\]
for all integers $i$. \qed
\end{lemma}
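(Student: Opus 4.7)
The plan is to read off both isomorphisms directly from the projective resolution of $\mathrm{H}^{n}(\mathbf{C},\blank)$ that was displayed just before the statement, combined with the machinery of Subsection~\ref{SS:fpf}.

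First, I identify the defect. The presentation
\[
(\mathrm{C}_{n-1},\blank) \lra (\mathrm{C}_{n}/\mathrm{B}_{n},\blank) \lra \mathrm{H}^{n}(\mathbf{C},\blank) \lra 0
\]
is, by Yoneda's lemma, induced by a unique morphism $\bar d_{n}:\mathrm{C}_{n}/\mathrm{B}_{n}\lra \mathrm{C}_{n-1}$, and unwinding the construction of the cohomology functor shows $\bar d_{n}$ is the factorization of $d_{n}$ through the canonical epimorphism $\mathrm{C}_{n}\twoheadrightarrow \mathrm{C}_{n}/\mathrm{B}_{n}$ (this factorization exists because $\mathrm{B}_{n}\subseteq \mathrm{Z}_{n}=\Ker d_{n}$). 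By the definition of the defect,
\[
w(\mathrm{H}^{n}(\mathbf{C},\blank)) = \Ker \bar d_{n} = \Ker(\mathrm{C}_{n}/\mathrm{B}_{n}\lra \mathrm{C}_{n-1}).
\]

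Second, I identify this kernel with $\mathrm{H}_{n}(\mathbf{C})$. The preimage in $\mathrm{C}_{n}$ of $\Ker \bar d_{n}$ is exactly $\mathrm{Z}_{n}$, so passing to the quotient by $\mathrm{B}_{n}$ gives $\Ker \bar d_{n}\simeq \mathrm{Z}_{n}/\mathrm{B}_{n}=\mathrm{H}_{n}(\mathbf{C})$. This establishes the chain of isomorphisms in the first displayed formula.

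The consequence is then immediate from the Proposition in Subsection~\ref{SS:fpf}: since $\mathrm{H}^{n}(\mathbf{C},\blank)$ is finitely presented (as noted explicitly in the paragraph preceding the lemma), the natural transformation $\rho_{\mathrm{H}^{n}(\mathbf{C},\blankd)}$ induces
\[
R^{i}\mathrm{H}^{n}(\mathbf{C},\blank)\simeq \Ext^{i}\bigl(w(\mathrm{H}^{n}(\mathbf{C},\blank)),\blank\bigr)\simeq \Ext^{i}(\mathrm{H}_{n}(\mathbf{C}),\blank)
\]
for every integer $i$, using the first part to rewrite the defect.

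There is no serious obstacle: the proof is essentially a bookkeeping exercise once one trusts that the presentation of $\mathrm{H}^{n}(\mathbf{C},\blank)$ indeed arises from the map $\bar d_{n}$ induced by the differential. The only mildly delicate point is the Yoneda identification of the arrow between representables with the underlying module map, but this is built into the very definition of the cohomology functor and requires no further argument beyond invoking Yoneda.
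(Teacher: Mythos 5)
Your argument is correct and matches the paper's (implicit) intent: read the defect directly off the projective resolution $0 \to (\mathrm{C}_{n-1}/\mathrm{B}_{n-1},\blank) \to (\mathrm{C}_{n-1},\blank) \to (\mathrm{C}_{n}/\mathrm{B}_{n},\blank) \to \mathrm{H}^{n}(\mathbf{C},\blank) \to 0$ stated just before the lemma, identify $w(\mathrm{H}^{n}(\mathbf{C},\blank)) = \Ker\bar d_n = \mathrm{Z}_n/\mathrm{B}_n = \mathrm{H}_n(\mathbf{C})$, and invoke the Proposition of Subsection~\ref{SS:fpf}. The paper leaves this verification to the reader with a \qed, and your writeup supplies exactly the bookkeeping it omits.
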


By~\eqref{D:Fsub}, the Yoneda embedding of the short exact sequence 
\begin{equation}\label{D:F-inj-stab}
0 \lra \mathrm{B}_{n-1}  \lra \mathrm{C}_{n-1} \lra \mathrm{C}_{n-1}/\mathrm{B}_{n-1}  \lra 0
\end{equation}
yields an explicit projective resolution 
\begin{equation}\label{D:Fsub-res}
0 \lra (\mathrm{C}_{n-1}/\mathrm{B}_{n-1}, \blank) \lra (\mathrm{C}_{n-1}, \blank)
\lra (\mathrm{B}_{n-1}, \blank) \lra \overline{\mathrm{H}^{n}(\mathbf{C}, \blank}) \lra 0
\end{equation}
of the sub-stabilization of $\mathrm{H}^{n}(\mathbf{C}, \blank)$. Summarizing the above observations, we have
\begin{namedthm*}{Universal Coefficient Theorem for Cohomology of Arbitrary Complexes}
 Let $\mathbf{C}$ be an arbitrary complex. Then the right fundamental sequence for $\mathrm{H}^{n}(\mathbf{C}, \blank)$ is of the form
\begin{equation}\label{D:rfs-co-arb}
\begin{tikzcd}
 0 \ar[r] 
   & \overline{\mathrm{H}^{n}(\mathbf{C}, \blank}) \ar[r] 
      & \mathrm{H}^{n}(\mathbf{C}, \blank) \ar[r] 
                  \ar[d, phantom, ""{coordinate, name=Z}]
         & (\mathrm{H}_{n}(\mathbf{C}), \blank) \ar[dll, rounded corners,
                                   to path={ -- ([xshift=2ex]\tikztostart.east)
						|- (Z) [pos=0.3]\tikztonodes
						-| ([xshift=-2ex]\tikztotarget.west) 
						-- (\tikztotarget)}] 
\\
   & \overline{\mathrm{H}^{n}(\mathbf{C}, \blank}) \circ \Sigma \ar[r] 
      & S^{1} (\mathrm{H}^{n}(\mathbf{C}, \blank)) \ar[r]
      			  \ar[d, phantom, ""{coordinate, name=Y}]
         & \Ext^{1}(\mathrm{H}_{n}(\mathbf{C}), \blank) \ar[dll, rounded corners,
                                   to path={ -- ([xshift=2ex]\tikztostart.east)
						|- (Y) [near end]\tikztonodes
						-| ([xshift=-2ex]\tikztotarget.west) 
						-- (\tikztotarget)}] 
\\
   & \overline{\mathrm{H}^{n}(\mathbf{C}, \blank}) \circ \Sigma^{2} \ar[r] 
      & S^{2}(\mathrm{H}^{n}(\mathbf{C}, \blank))  \ar[r]
      			  \ar[d, phantom, ""{coordinate, name=X}]
         & \Ext^{2}(\mathrm{H}_{n}(\mathbf{C}), \blank) \ar[dll, rounded corners,
                                   to path={ -- ([xshift=2ex]\tikztostart.east)
						|- (X) [near end]\tikztonodes
						-| ([xshift=-2ex]\tikztotarget.west) 
						-- (\tikztotarget)}]
\\
   & \overline{\mathrm{H}^{n}(\mathbf{C}, \blank}) \circ \Sigma^{3} \ar[r] 
      & S^{3}(\mathrm{H}^{n}(\mathbf{C}, \blank)) \ar[r]
         & \ldots
\end{tikzcd}
\end{equation}
where $\overline{\mathrm{H}^{n}(\mathbf{C}, \blank})$ can be computed by~\eqref{D:Fsub-res}. This sequence is exact except, possibly, at the rightmost terms. If $\mathrm{H}^{n}(\mathbf{C}, \blank)$ is half-exact\footnote{This happens, for example, when $\mathbf{C}$ is a complex of projectives, see Lemma~\ref{L:stab=Ext} below.}, then this sequence is exact. \qed
\end{namedthm*}

Our next goal is to identify the satellites and the sub-stabilizations of the cohomology functor. For that, we will need an additional assumption, but first we make a general observation.

\begin{lemma}\label{L:S=stab}
Suppose $(F^{i})_{i \in \Z}$ is a cohomological functor, i.e., an exact 
$\delta$-functor \textup{(}\cite[2.1]{Gr57}\textup{)}.\footnote{In this lemma, our definition of exact $\delta$-functor is a little more general than the quoted one. To wit, the integer $n$ can vary over the entire $\Z$.} Then, for each integer $n$,  the natural transformation $\theta^{n} : S^{1}F^{n} \lra \overline{F^{n + 1}}$ induced by $\delta^{n}$ is an isomorphism.
\end{lemma}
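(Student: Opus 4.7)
My plan is to unwind the two sides of $\theta^n$ on a fixed object $B$ and read off the isomorphism directly from the long exact sequence attached to the cohomological functor $(F^i)$.

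First, I would fix a cosyzygy sequence $0 \to B \xrightarrow{i} I \xrightarrow{e} \Sigma B \to 0$ with $I$ injective. By the definition recalled after \eqref{E:i-stab-via-container}, $\overline{F^{n+1}}(B)$ is precisely $\Ker\bigl(F^{n+1}(i)\bigr)$. On the other hand, by the definition of satellite recalled just before \eqref{D:circ-diag}, $S^1 F^n(B) = \Cok\bigl(F^n(e)\bigr)$, where we use that $e$ realizes the epi part of the epi-mono factorization of $I \to I^1$ in any injective resolution of $B$ (so this definition of $S^1F^n$ agrees with the one stated in the paper independently of the further choice of resolution; indeed, any cosyzygy of $B$ differs from $\Sigma B$ by an injective summand, against which $F^n$ acts through an epi in the cokernel).

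Second, because $(F^i)$ is exact $\delta$-functor, the short exact sequence above yields the long exact sequence
\[
\cdots \to F^n(I) \xrightarrow{F^n(e)} F^n(\Sigma B) \xrightarrow{\delta^n(B)} F^{n+1}(B) \xrightarrow{F^{n+1}(i)} F^{n+1}(I) \to \cdots
\]
Exactness at $F^n(\Sigma B)$ gives $\Imr F^n(e) = \Ker \delta^n(B)$, so $\delta^n(B)$ induces a monomorphism $\bar\delta^n(B) : S^1F^n(B) = \Cok F^n(e) \hookrightarrow F^{n+1}(B)$ with image $\Imr \delta^n(B)$. Exactness at $F^{n+1}(B)$ gives $\Imr\delta^n(B) = \Ker F^{n+1}(i) = \overline{F^{n+1}}(B)$. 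Composing, $\bar\delta^n(B)$ lands in $\overline{F^{n+1}}(B)$ and is an isomorphism onto it; this is the map $\theta^n(B)$ induced by $\delta^n$.

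Third, I would verify that this bijection is natural in $B$. Given $\varphi : B \to B'$, one chooses a map of cosyzygy sequences covering $\varphi$ (using injectivity of $I'$); naturality of $\delta^n$ with respect to morphisms of short exact sequences (part of the $\delta$-functor data) forces the squares relating $S^1 F^n(\varphi)$, $\delta^n$, and $\overline{F^{n+1}}(\varphi)$ to commute. The only subtle point, and the place I would slow down, is to check that the constructions $S^1F^n$ and $\overline{F^{n+1}}$ do not depend on the chosen cosyzygy---this is standard (Schanuel and the fact that $F^n$, $\overline{F^{n+1}}$ kill the injective summand absorbing the ambiguity), but it is what allows the naturality argument to go through independently of the chosen lift of $\varphi$. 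Granting this, $\theta^n$ is a natural isomorphism, as claimed.
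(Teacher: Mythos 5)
Your proposal is correct and is essentially the same argument the paper gives, just spelled out in more detail: the paper's proof also fixes a cosyzygy sequence $0 \to B \to I \to \Sigma B \to 0$, applies $F$, and reads the isomorphism off the resulting long exact sequence of the exact $\delta$-functor.
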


\begin{proof}
 For a module $B$, choose an exact sequence $0 \to B \to I \to \Sigma B \to 0$ with an injective $I$, apply $F$, and pass to the corresponding long exact sequence. 
\end{proof}

\begin{lemma}\label{L:stab=Ext}
 Suppose $\mathbf{C}$ is a projective complex. Then $(\mathrm{H}^{i}(\mathbf{C}, \blank))_{i \in \Z}$ is a cohomological functor, and, for each integer $n$, the canonical natural transformation 
 \[
 \xi^{n-1} : \overline{\mathrm{H}^{n}(\mathbf{C}, \blank}) \lra \Ext^{1}(\mathrm{C}_{n-1}/\mathrm{B}_{n-1},\blank) 
 \]
 is an isomorphism.
\end{lemma}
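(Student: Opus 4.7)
My plan is to handle the two assertions separately, exploiting throughout only the single fact that, for each $i$, the functor $(\mathrm{C}_{i}, \blank)$ is exact, which holds because every $\mathrm{C}_{i}$ is projective.

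For the cohomological functor structure, I would fix a short exact sequence $0 \to B' \to B \to B'' \to 0$ in $\mathcal{A}$ and apply $(\mathrm{C}_{i}, \blank)$ in every degree. Projectivity of $\mathrm{C}_{i}$ together with exactness of products of short exact sequences of abelian groups gives a short exact sequence of $\Hom$-complexes
\[
0 \lra \mathrm{Homgr}(\mathbf{C}, B') \lra \mathrm{Homgr}(\mathbf{C}, B) \lra \mathrm{Homgr}(\mathbf{C}, B'') \lra 0,
\]
and the standard long exact sequence in cohomology equips $(\mathrm{H}^{i}(\mathbf{C}, \blank))_{i \in \Z}$ with natural connecting maps $\delta^{n}$, exhibiting it as an exact $\delta$-functor with $n$ ranging over all of $\Z$, as permitted in the generalized conventions of Lemma~\ref{L:S=stab}.

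For the isomorphism $\xi^{n-1}$, I would identify both functors with the same cokernel. On one hand, the projective resolution \eqref{D:Fsub-res}, obtained by Yoneda-embedding \eqref{D:F-inj-stab}, realizes $\overline{\mathrm{H}^{n}(\mathbf{C}, \blank)}$ as the cokernel of the natural transformation $(\mathrm{C}_{n-1}, \blank) \lra (\mathrm{B}_{n-1}, \blank)$ induced by the inclusion $\mathrm{B}_{n-1} \hookrightarrow \mathrm{C}_{n-1}$. On the other hand, applying $\Ext^{\ast}(\blank, B)$ to \eqref{D:F-inj-stab} yields the exact fragment
\[
(\mathrm{C}_{n-1}, B) \lra (\mathrm{B}_{n-1}, B) \overset{\partial}\lra \Ext^{1}(\mathrm{C}_{n-1}/\mathrm{B}_{n-1}, B) \lra \Ext^{1}(\mathrm{C}_{n-1}, B),
\]
and projectivity of $\mathrm{C}_{n-1}$ forces the last term to vanish, so $\partial$ is epic with kernel equal to the image from $(\mathrm{C}_{n-1}, B)$. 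Hence $\Ext^{1}(\mathrm{C}_{n-1}/\mathrm{B}_{n-1}, \blank)$ coincides with the same cokernel, and the identification is functorial in $B$.

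The principal subtlety I anticipate is verifying that the isomorphism produced this way is genuinely the \emph{canonical} $\xi^{n-1}$ of the statement rather than merely \emph{some} isomorphism. I would resolve this by noting that both source and target are effaceable (they vanish on injectives) and both arise from applying the Yoneda embedding and the $\Ext$ long exact sequence to the \emph{same} short exact sequence \eqref{D:F-inj-stab}. Any morphism between them induced by comparison of the first three terms of the corresponding presentations is therefore forced by the universal property of the connecting homomorphism, pinning down $\xi^{n-1}$ uniquely.
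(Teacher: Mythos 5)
Your proposal is correct and follows essentially the same route as the paper: termwise projectivity gives the short exact sequence of Hom-complexes and hence the $\delta$-functor structure, while the isomorphism $\xi^{n-1}$ is obtained by comparing the long exact $\Ext$-sequence of~\eqref{D:F-inj-stab} (which terminates early since $\Ext^{1}(\mathrm{C}_{n-1},\blank)=0$) with the projective resolution~\eqref{D:Fsub-res}, both realizing the target as the cokernel of $(\mathrm{C}_{n-1},\blank)\to(\mathrm{B}_{n-1},\blank)$. Your closing remark on canonicality is a welcome clarification of a point the paper leaves implicit.
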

 
\begin{proof}
 Since $\mathbf{C}$ is projective, applying the functor $(\mathbf{C}, \blank)$ to a short exact sequence of modules produces a short exact sequence of complexes. The first result now follows from the fact that cohomology is a cohomological functor on complexes.
 
 To prove the second assertion, apply the Yoneda embedding to the short exact sequence~\eqref{D:F-inj-stab}, pass to the cohomology long exact sequence, and compare the result with~\eqref{D:Fsub-res}.
\end{proof}

Now, we identify the satellites of $\mathrm{H}^{n}(\mathbf{C}, \blank)$. 

\begin{proposition}
 Suppose $\mathbf{C}$ is a projective complex. Then the natural transformation  
 $\theta^{n} : S^{1}(\mathrm{H}^{n}(\mathbf{C}, \blank)) \lra \overline{\mathrm{H}^{n+1}(\mathbf{C}, \blank})$ induced by $\delta^{n}$ is a functor isomorphism. The composition
 \[
 \xi^{n}\theta^{n} : S^{1}(\mathrm{H}^{n}(\mathbf{C}, \blank)) \lra 
 \Ext^{1}(\mathrm{C}_{n}/\mathrm{B}_{n},\blank)
 \] 
 is an isomorphism.
\end{proposition}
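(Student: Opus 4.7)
The plan is to combine the two preceding lemmas in a direct way; there is no substantial obstacle once those lemmas are in hand.

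First I would invoke Lemma~\ref{L:stab=Ext} to obtain that, because $\mathbf{C}$ is a projective complex, the collection $(\mathrm{H}^{i}(\mathbf{C}, \blank))_{i \in \Z}$ is a cohomological $\delta$-functor on $\mathcal{A}$ with values in $\ab$. The point here is that for $\mathbf{C}$ projective the functor $(\mathbf{C}, \blank)$ carries a short exact sequence of modules to a short exact sequence of complexes, whose cohomology long exact sequence delivers the connecting maps $\delta^{n}$.

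Second, with $F^{n} := \mathrm{H}^{n}(\mathbf{C}, \blank)$, I would apply Lemma~\ref{L:S=stab}. That lemma states precisely that for a cohomological $\delta$-functor the natural transformation $\theta^{n} : S^{1}F^{n} \lra \overline{F^{n+1}}$ arising from $\delta^{n}$ is an isomorphism. Applied to our situation, this yields the first claim:
\[
\theta^{n} : S^{1}(\mathrm{H}^{n}(\mathbf{C}, \blank)) \lra \overline{\mathrm{H}^{n+1}(\mathbf{C}, \blank})
\]
is a functor isomorphism.

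Third, for the composite $\xi^{n}\theta^{n}$, I would again use Lemma~\ref{L:stab=Ext}, this time with the index shifted: the natural transformation
\[
\xi^{n} : \overline{\mathrm{H}^{n+1}(\mathbf{C}, \blank}) \lra \Ext^{1}(\mathrm{C}_{n}/\mathrm{B}_{n}, \blank)
\]
is an isomorphism. Composing the two isomorphisms gives the second assertion. The only matter that would deserve a brief check is that the composite $\xi^{n}\theta^{n}$ really is the canonical natural transformation one would build from the explicit projective resolution~\eqref{D:Fsub-res} and the Yoneda image of~\eqref{D:F-inj-stab} applied to $n+1$ in place of $n$; this is a routine compatibility verification between the $\delta$-functor structure on $(\mathrm{H}^{i}(\mathbf{C}, \blank))$ and the identification of $\overline{\mathrm{H}^{n+1}(\mathbf{C}, \blank})$ with $\Ext^{1}(\mathrm{C}_{n}/\mathrm{B}_{n},\blank)$, and it is what the hypothesis ``induced by $\delta^{n}$'' already encodes.
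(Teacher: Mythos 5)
Your proposal is correct and is essentially the paper's own argument: Lemma~\ref{L:stab=Ext} to get the cohomological $\delta$-functor structure, Lemma~\ref{L:S=stab} for the isomorphism $\theta^{n}$, and Lemma~\ref{L:stab=Ext} again with $n$ replaced by $n+1$ to identify $\xi^{n}$, then compose. The only difference is cosmetic: you flag the final compatibility check as worth a glance, which the paper silently absorbs into the phrase ``induced by $\delta^{n}$.''
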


\begin{proof}
 By Lemma~\ref{L:stab=Ext}, $(\mathrm{H}^{i}(\mathbf{C}, \blank))_{i \in \Z}$ is a cohomological functor. The first assertion is then a consequence of Lemma~\ref{L:S=stab}. The second assertion follows from Lemma~\ref{L:stab=Ext}.
\end{proof}

Summarizing the foregoing discussion and utilizing dimension shift, we have 

\begin{namedthm*}{Universal Coefficient Theorem for Cohomology of Projective Complexes}
  Let $\mathbf{C}$ be a projective complex. Then the right fundamental sequence for $\mathrm{H}^{n}(\mathbf{C}, \blank)$ is exact and of the form
  \begin{equation}\label{D:rfs-co-proj}
\begin{tikzcd}[cramped, sep=small]
 0 \ar[r] 
   & \Ext^{1}(\mathrm{C}_{n-1}/\mathrm{B}_{n-1},\blank) \ar[r] 
      & \mathrm{H}^{n}(\mathbf{C}, \blank) \ar[r] 
                  \ar[d, phantom, ""{coordinate, name=Z}]
         & (\mathrm{H}_{n}(\mathbf{C}), \blank) \ar[dll, rounded corners,
                                   to path={ -- ([xshift=2ex]\tikztostart.east)
						|- (Z) [pos=0.3]\tikztonodes
						-| ([xshift=-2ex]\tikztotarget.west) 
						-- (\tikztotarget)}] 
\\
   & \Ext^{2}(\mathrm{C}_{n-1}/\mathrm{B}_{n-1},\blank) \ar[r] 
      & \Ext^{1}(\mathrm{C}_{n}/\mathrm{B}_{n},\blank) \ar[r]
      			  \ar[d, phantom, ""{coordinate, name=Y}]
         & \Ext^{1}(\mathrm{H}_{n}(\mathbf{C}), \blank) \ar[dll, rounded corners,
                                   to path={ -- ([xshift=2ex]\tikztostart.east)
						|- (Y) [near end]\tikztonodes
						-| ([xshift=-2ex]\tikztotarget.west) 
						-- (\tikztotarget)}] 
\\
   & \Ext^{3}(\mathrm{C}_{n-1}/\mathrm{B}_{n-1},\blank) \ar[r] 
      & \Ext^{2}(\mathrm{C}_{n}/\mathrm{B}_{n},\blank)  \ar[r]
      			  \ar[d, phantom, ""{coordinate, name=X}]
         & \Ext^{2}(\mathrm{H}_{n}(\mathbf{C}), \blank) \ar[dll, rounded corners,
                                   to path={ -- ([xshift=2ex]\tikztostart.east)
						|- (X) [near end]\tikztonodes
						-| ([xshift=-2ex]\tikztotarget.west) 
						-- (\tikztotarget)}]
\\
   & \Ext^{4}(\mathrm{C}_{n-1}/\mathrm{B}_{n-1},\blank) \ar[r] 
      & \Ext^{3}(\mathrm{C}_{n}/\mathrm{B}_{n},\blank)  \ar[r]
         & \ldots
\end{tikzcd}
\end{equation} \qed
\end{namedthm*}

\begin{remark}
 Yuya Otake informed me (October 30, 2024) he had also obtained the UCT for cohomology of projective complexes, see~\cite[Corollary 3.1]{O-25}. 
\end{remark}

\section{The left fundamental sequence of a covariant functor}\label{S:L0}

Suppose $\mathcal{A}$ has enough projectives. If $F : \mathcal{A} \lra \ab$ is an additive covariant functor, the \texttt{quot-stabilization} $\underline{F} : \mathcal{A} \lra \ab$ is defined as the cokernel of the canonical natural transformation 
$\lambda_{F} :  L_{0}F \lra F$, whose domain is the zeroth left-derived functor of $F$. Thus we have a defining exact sequence
\begin{equation}\label{E:first-Ldef-seq}
L_{0}F \overset{\lambda_{F}}\lra F \lra \underline{F} \lra 0.
\end{equation}

Since $\lambda_{F}$ is an isomorphism on projectives, $\underline{F}$ vanishes on projectives. This fact, in view of Schanuel's lemma, shows that $\underline{F} \circ \Omega$ is a well-defined functor, where~$\Omega$ denotes the operation of taking the kernel of the projection from a projective module onto a module. For brevity, we shall refer to that kernel as a syzygy module. 

Equivalently, given a $\Lambda$-module $B$, the value $\underline{F}(B)$ can be defined and computed by finding an epimorphism $P \overset{\pi}\lra B \lra 0$ with $P$ projective, 
applying $F$, and taking the cokernel. This yields a system of defining exact sequences
\begin{equation}\label{E:i-stab-via-ancestor}
 F(P) \overset{F(\pi)}\lra F(B) \overset{q}\lra \underline{F}(B) \lra 0.
\end{equation}

Similar to what we did in Section~\ref{S:left-fs-co}, we want to construct a natural transformation 
\[
\alpha_{F} : \underline{F} \circ \Omega \lra L_{0}F.
\]
Going back to the construction of the right fundamental sequence, we reverse the directions of the arrows, replace injectives with projectives, kernels with cokernels, right satellites with left satellites, etc., and obtain a ``left'' analog of the circular 
diagram~\eqref{D:circ-diag}

\begin{equation}\label{D:circ-diag-quot}
\begin{tikzcd}
 0 \ar[d]
 & 
 &
 & \underline{F}(B)
 & 0
\\
 K \ar[d, "a"'] \ar[dr, tail, "j"'] 
 & 
 &
 & F(B) \ar[r, dashleftarrow, "y"] \ar[u, twoheadrightarrow]
 & L \ar[u]
\\
 Z_{1} \ar[r, tail] \ar[ddr, "b"']
 & F(P_{1}) \ar[rd, "F(e)"'] \ar[rr, "F(d_{1})"]
 &
 & F(P_{0}) \ar[r, two heads] \ar[ru, "h"', two heads] \ar[u, "F(\pi)"]
 & L_{0}F(B) \ar[u, two heads, "l"']
\\
 &
 & F(\Omega B) \ar[ru, "F(m)"'] \ar[rd, two heads]
 \\
 & S_{1}F(B)  \ar[ru, tail] \ar[rr, "g"']
 &
 & \underline{F}(\Omega B) \ar[ruu, "\alpha_{F}(B)"']
\end{tikzcd}
\end{equation}
leading to an exact six-term circular sequence
\begin{equation}\label{D:6-term-es-left} 
0 \lra K \lra Z_{1} \lra S_{1}F(B) \lra \underline{F}(\Omega B) \lra L_{0}F(B) \lra L \lra 0,
\end{equation}
which in turn yields a seven-term sequence
\begin{equation}\label{D:7-term-cx-left}
\begin{tikzcd}[cramped, sep=small]
 0 \ar[r]
 & K \ar[r] 
 & Z_{1} \ar[r]  
 & S_{1}F(B)  \ar[r] 
 & \underline{F}(\Omega B) \ar[r]
 & L_{0}F(B) \ar[r, "yl"]
 & F(B) \ar[r, "q"]
 & \underline{F}(B) \ar[r] 
 & 0
 \end{tikzcd}
\end{equation}
and the desired natural transformation 
\[
\alpha_{F} : \underline{F} \circ \Omega \lra L_{0}F.
\]
Furthermore, arguments entirely analogous to those from Section~\ref{S:left-fs-co} lead to the following sequence of functors

\begin{equation}\label{D:lfs-co-fun}
\begin{tikzcd}
  \ldots \ar[r] 
      &  S_{3}F \ar[r]
      			  	\ar[d, phantom, ""{coordinate, name=Z}]
         & \underline{F}\circ \Omega^{3} \ar[dll, rounded corners,
                                   to path={ -- ([xshift=2ex]\tikztostart.east)
						|- (Z) [pos=0.3]\tikztonodes
						-| ([xshift=-2ex]\tikztotarget.west) 
						-- (\tikztotarget)}] 
\\
    L_{2}F \ar[r]   
      & S_{2}F \ar[r]
      			  	\ar[d, phantom, ""{coordinate, name=Y}]
         & \underline{F}\circ \Omega^{2} \ar[dll, rounded corners,
                                   to path={ -- ([xshift=2ex]\tikztostart.east)
						|- (Y) [near end]\tikztonodes
						-| ([xshift=-2ex]\tikztotarget.west) 
						-- (\tikztotarget)}] 
\\
   L_{1}F \ar[r]			
      &   S_{1}F	\ar[r]
      				\ar[d, phantom, ""{coordinate, name=X}]
         & \underline{F}\circ \Omega  \ar[dll, "\alpha_{F}", rounded corners,
                                   to path={ -- ([xshift=2ex]\tikztostart.east)
						|- (X) [pos=0.3]\tikztonodes
						-| ([xshift=-2ex]\tikztotarget.west) 
						-- (\tikztotarget)}]
\\
   L_{0}F \ar[r, "\lambda_{F}"]              	 
      & F	\ar[r]			
         &  \underline{F} \ar[r]
            & 0
\end{tikzcd}
\end{equation}


\begin{definition}
 We shall refer to~\eqref{D:lfs-co-fun} as the left fundamental sequence of $F$.
\end{definition}


We also have an analog of Theorem~\ref{T:co-fund-right}.

\begin{theorem}\hfill\label{T:co-fund-left}
\begin{enumerate}
 \item The left fundamental sequence~\eqref{D:lfs-co-fun} is a complex, whose homology is zero except possibly, at the terms $L_{i}F$.
  
 \item If $F$ is half-exact, then the sequence is exact. 
 
 \item If $F$ is mono-preserving, then $\alpha_{F}$ is monic.
 
 \item If $F$ is epi-preserving, then $\underline{F} = 0$ and $\lambda_{F}$ is epic.
 
 \item $\lambda_{F}$ is an isomorphism if and only if $F$ is right-exact.

\end{enumerate}
\end{theorem}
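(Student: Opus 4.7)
The plan is to mirror the proof of Theorem~\ref{T:co-fund-right} by systematically dualizing every step: injective resolutions become projective resolutions, kernels become cokernels, right satellites become left satellites, and the two halves of the Parallelogram Lemma are swapped. The diagram~\eqref{D:circ-diag-quot} is already the dual of~\eqref{D:circ-diag}, so the first task is to construct the dual of~\eqref{D:circ-diag-extended} by splicing in the epi-mono factorizations $d_{i} = m_{i}e_{i}$ of the differentials of a projective resolution $\cdots \to P_{2} \to P_{1} \to P_{0} \to B \to 0$. The dashed arrows in this extended diagram, including $\alpha_{F}$ and its higher analogues, are then produced by the universal property of cokernels (dual to the use of kernels in the right case).

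For (1), the complex claim reduces to a routine diagram chase in this extended diagram, analogous to the one the authors leave as an exercise in the right version. For (2), I would invoke part (a) of the Parallelogram Lemma (dual to the part (b) invocation in Theorem~\ref{T:co-fund-right}). In the parallelogram placed around each $L_{i}F$ term, one peripheral sequence becomes exact precisely when $F$ is half-exact at the corresponding syzygy position of the resolution; the other leg is epic by construction (dually to the monic leg used in the covariant right proof). Part (a) of the Parallelogram Lemma then yields exactness of the left fundamental sequence at $L_{i}F$.

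For (3), if $F$ preserves monomorphisms, then $F(m)$ is monic in the factorization $d_{1} = me$, so $S_{1}F(B) = \Ker F(m) = 0$. The exactness of the circular sequence~\eqref{D:6-term-es-left} at $\underline{F}(\Omega B)$ then forces $\alpha_{F}$ to be monic. Dually, for (4), if $F$ preserves epimorphisms, then for any epimorphism $\pi : P \twoheadrightarrow B$ with $P$ projective, $F(\pi)$ is epic, so $\underline{F}(B) = 0$ directly from~\eqref{E:i-stab-via-ancestor}; the defining sequence~\eqref{E:first-Ldef-seq} then makes $\lambda_{F}$ epic. Part (5) is the classical characterization of right-exact functors as those that coincide with their zeroth left-derived functor.

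The only real obstacle is bookkeeping: ensuring the dashed arrows in the extended diagram are well-defined natural transformations by the correct universal property (cokernels here, rather than kernels as in the right case), and choosing the correct half of the Parallelogram Lemma at each row (part (a) rather than part (b), since the relevant vertical maps in the dual diagram are epic rather than monic). Once the dual extended diagram is set up correctly, all five claims follow by a transposition of the arguments already given for Theorem~\ref{T:co-fund-right}.
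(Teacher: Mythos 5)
Your proposal matches the paper's approach exactly: the paper's entire proof is the one-line remark ``Similar to the proof of Theorem~\ref{T:co-fund-right},'' i.e., dualize the covariant-right argument by swapping projectives for injectives, cokernels for kernels, and left satellites for right satellites, which is precisely what you outline. One small bookkeeping caveat: the proof of Theorem~\ref{T:co-fund-right} invokes \emph{both} halves of the Parallelogram Lemma --- part (b) (with the monic $\iota$) for exactness at $R^{0}F$ and part (a) (with the epic $\pi$) for exactness at $R^{i}F$, $i \geq 1$ --- so the dual proof should likewise use part (a) (with the epic cokernel projection onto $\underline{F}(\Omega B)$) at $L_{0}F$ and part (b) (with the monic inclusion of $S_{1}F$) at $L_{i}F$, $i \geq 1$, rather than part (a) alone as you state in your final paragraph; this does not affect the soundness of the argument.
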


\begin{proof}
 Similar to the proof of Theorem~\ref{T:co-fund-right}.
\end{proof}

Similar to Corollary~\ref{C:connect-iso}, we have

\begin{corollary}
 Suppose $F$ is a left-exact functor. Then the canonical natural transformation  
 $\underline{F} \circ \Omega^{i+1} \lra  L_{i}F $ is an isomorphism for each $i \geq 1$.  \qed
\end{corollary}

\begin{remark}
 Assume $\mathscr{A}$ is the category of left $\Lambda$-modules over a ring 
 $\Lambda$. Recall that the canonical natural transformation $F(\Lambda) \otimes _{\blank} \lra F$ is an isomorphism on~$\Lambda$. If $F$ preserves direct sums, then this transformation is an isomorphism on all projectives, and therefore it is isomorphic to the zeroth left-derived transformation for $F$. In particular, $L_{i}F \simeq \Tor_{i}(F(\Lambda), \blank)$ for all $i$.
\end{remark}

\begin{example}
 Let $F := (A, \blank)$. It is easy to see that the quot-stabilization of $F$ is 
 $(A, \blank)$ modulo projectives, i.e., $\underline{F} \simeq (\underline{A, \blank})$ in the classical notation.\footnote{This is not a tautology but a fortuitous (if not unfortunate) coincidence; the underline notation for the Hom modulo projectives seems to predate the underline notation for the quot-stabilization. As we shall see later, the situation is dramatically different for contravariant functors.} The left fundamental sequence then becomes
\begin{equation}\label{E:co-4-term for hom}
\xymatrix
	{
	0 \ar[r]
	& (\underline{A, \blank}) \circ  \Omega \ar[r]^{\alpha_{F}}          
	& L_{0}(A, \blank) \ar[r]^{\lambda_{A}}
	& (A, \blank) \ar[r]
	& (\underline{A, \blank}) \ar[r]
	& 0.
	}
\end{equation}
Since $(A, \blank)$ is left-exact, this sequence is exact. 
\end{example}

\begin{corollary}
Suppose $\Lambda$ is left hereditary. Then $L_{0}(A, \blank) \simeq P(A, \blank)$, the functor of maps with domain $A$ factoring through projectives, and the sequence~\eqref{E:co-4-term for hom}  becomes
 \[
 \xymatrix
	{
	0 \ar[r]        
	& P(A, \blank) \ar[r]^{\lambda_{A}}
	& (A, \blank) \ar[r]
	& (\underline{A, \blank}) \ar[r]
	& 0.
	}
 \]
 In particular, $P(A, \blank)$ is an exact functor.
\end{corollary}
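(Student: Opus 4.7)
The plan is to mirror the tensor-product corollary at the end of Section~\ref{S:rfs-cov}, replacing ``right'' by ``left'', cokernels by kernels, injectives by projectives, and $R^{0}$ by $L_{0}$.

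First I would observe that over a left-hereditary ring every syzygy module is projective, while the quot-stabilization of any covariant functor vanishes on projectives; hence $(\underline{A, \blank}) \circ \Omega = 0$. Since $(A, \blank)$ is left-exact, hence half-exact, Theorem~\ref{T:co-fund-left}(2) makes the left fundamental sequence~\eqref{E:co-4-term for hom} exact, and the vanishing of the leading term forces $\lambda_{A}$ to be monic and collapses the sequence to
\[
0 \lra L_{0}(A, \blank) \overset{\lambda_{A}}\lra (A, \blank) \lra (\underline{A, \blank}) \lra 0.
\]

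Next I would identify $L_{0}(A, \blank)$ with $P(A, \blank)$. Computing $L_{0}(A, \blank)(B)$ from a projective resolution $P_{1} \to P_{0} \twoheadrightarrow B$, the map $\lambda_{A}(B)$ is induced by postcomposition with $P_{0} \to B$, so its image consists precisely of morphisms $A \to B$ factoring through $P_{0}$. Projectivity implies that any morphism $A \to B$ factoring through some projective already factors through $P_{0}$, giving image $= P(A, B)$. Combined with the injectivity of $\lambda_{A}$ from the previous step, this yields the desired natural isomorphism $L_{0}(A, \blank) \simeq P(A, \blank)$.

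Finally, for the exactness of $P(A, \blank)$: being a zeroth left-derived functor, it is right-exact, by the standard Horseshoe-lemma argument (take a term-wise split short exact sequence of projective resolutions, apply the additive functor, read the tail of the resulting long exact sequence in $L_{i}$). On the other hand, as a subfunctor of the mono-preserving $(A, \blank)$ via the monic $\lambda_{A}$, it inherits mono-preservation by a two-line diagram chase. A mono-preserving right-exact functor is exact, completing the proof. I do not foresee a genuine obstacle here; the only non-formal ingredient is the universally valid right-exactness of $L_{0}$ of an additive functor, and the rest is a clean dualization of the tensor-product case.
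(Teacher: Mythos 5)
Your proof is correct and follows essentially the same route as the paper: the syzygy-is-projective observation kills the leading term, Theorem~\ref{T:co-fund-left}(2) gives exactness, and exactness of $P(A,\blank)$ follows from its being right-exact (as a zeroth left-derived functor) and mono-preserving (as a subfunctor of $(A,\blank)$). The only difference is that you spell out the lifting argument showing the image of $\lambda_A$ equals $P(A,\blank)$, which the paper treats as implicit.
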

\begin{proof}
 Since the syzygy module of any module is projective, the first term of the fundamental sequence vanishes. Since the fundamental sequence is exact, we also
 have $L_{0}(A, \blank) \simeq P(A, \blank)$. To show that $P(A, \blank)$ is exact, we first notice that it preserves monomorphisms because it is a subfunctor of the left-exact functor $(A, \blank)$. Being a zeroth left-derived functor, it is also right-exact, and therefore exact.
\end{proof}

Returning to general rings, assume now that $A$ is finitely generated. The canonical natural transformation $A^{\ast} \otimes \blank \lra (A, \blank)$ is an isomorphism on $\Lambda$ and, since~$A$ is finitely generated, on any projective. Combined with the fact that $A^{\ast} \otimes \blank$ is right-exact, this shows that this transformation is isomorphic to $\lambda_{F}$ and the sequence turns into
\begin{equation}
\xymatrix
	{
	0 \ar[r]
	& (\underline{A, \blank}) \circ  \Omega \ar[r]^{\alpha_{F}}          
	& A^{\ast} \otimes \blank \ar[r]^{\lambda_{A}}
	& (A, \blank) \ar[r]
	& (\underline{A, \blank}) \ar[r]
	& 0.
	}
\end{equation}

Furthermore, if $A$ is finitely presented, then  
$(\underline{A, \blank}) \simeq \Tor_{1}(\Tr A, \blank)$\footnote{Here we sketch a quick proof. For a module $B$, choose a syzygy sequence $0 \to \Omega B \to Q \to B \to 0$ and tensor it with the finite presentation~\eqref{D:TrA} of 
$\Tr A$. Convert the $P_{i}^{\ast} \otimes \blank$ to $(P_{i}, \blank)$, and use the snake lemma.} and we recover Auslander's sequence~\eqref{D:tor}:
\begin{equation}
\xymatrix@R1pt@C15pt
	{
	0 \ar[r]
	& \Tor_{2}(\Tr A, \blank) \ar[r]
	& A^{\ast} \otimes \blank \ar[r]^{\lambda_{A}}
	& (A, \blank) \ar[r]
	& \Tor_{1}(\Tr A, \blank) \ar[r]
	& 0.
	}
\end{equation}

\section{Interlude: tensor-copresented functors}\label{S:tensor-cop}

The goal of this section is to set the stage for the universal coefficient theorem for homology of arbitrary complexes. To this end, we examine a class of functors which exhibit a behavior dual to that of finitely presented functors.

\begin{definition}
 An additive functor from (left) $\Lambda$-modules to abelian groups is said to be tensor-copresented (\textrm{TC}, for short) if it is isomorphic to the kernel of a natural transformation between univariate tensor product functors. In other words, $F$ is TC if it is defined by an exact sequence
 \[
 0 \lra F \lra A \otimes \blank \lra B \otimes \blank,
 \]
 where $A$ and $B$ are right $\Lambda$-modules.
\end{definition}

\begin{remark}
 The reader who is familiar with the Auslander-Gruson-Jensen (AGJ) duality, will immediately see that TC functors on left modules are precisely the AGJ transforms of finitely presented functors on right modules.\footnote{The functor $A \otimes \blank$ is not in general an injective object in a functor category. Indeed, suppose the ring $\Lambda$ is von Neumann regular but not semisimple. Then there is a nonsplit  exact sequence $0 \to A \to B \to C \to 0$, and it must be pure. Apply the tensor embedding to get a short exact sequence of functors $0 \to A \otimes \blank \to B \otimes \blank  \to C \otimes \blank  \to 0$ (because the original sequence is pure exact). If $A \otimes \blank$ were injective, then this sequence would split, which would imply that the original sequence is split, too, a contradiction.

Now we can also show that the AGJ transformation is not always exact on tensor-copresented functors. Start with the same sequence as before and apply the tensor embedding to get a short exact sequence in the functor category. Now assume the AGJ transformation is exact and apply it to the obtained sequence. Get a short exact sequence of representable functors. It must split because representables are projective. Now applying the AGJ transformation again, we have the split exact sequence of tensor product functors we started with. But then the original sequence must split, a contradiction.}
\end{remark}

\begin{lemma}
 Let $A$ be a right module and $G$ an additive functor from left modules to abelian groups. If $G$ preserves direct sums, then any natural transformation $\phi : A \otimes \blank \lra G$ is uniquely determined by its $\prescript{}{\Lambda}\Lambda$-component.
\end{lemma}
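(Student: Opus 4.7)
The plan is to show that if two natural transformations $\phi, \psi : A \otimes \blank \lra G$ have $\phi_{\prescript{}{\Lambda}\Lambda} = \psi_{\prescript{}{\Lambda}\Lambda}$, then they agree on every left $\Lambda$-module. I would proceed by a two-step reduction: first establish agreement on free modules, then extend to arbitrary modules via a free presentation and the right-exactness of $A \otimes \blank$.

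For the first step, fix an index set $I$ and consider the free module $\Lambda^{(I)}$ together with the canonical inclusions $\iota_{j} : \Lambda \lra \Lambda^{(I)}$. Naturality of $\phi$ at each $\iota_{j}$ yields the identity $\phi_{\Lambda^{(I)}} \circ (A \otimes \iota_{j}) = G(\iota_{j}) \circ \phi_{\Lambda}$. Since the tensor product preserves direct sums automatically (as a left adjoint), the maps $A \otimes \iota_{j}$ exhibit $A \otimes \Lambda^{(I)}$ as a coproduct of copies of $A \otimes \Lambda \simeq A$ indexed by $I$. By the direct-sum-preserving hypothesis on $G$, the maps $G(\iota_{j})$ exhibit $G(\Lambda^{(I)})$ as a coproduct of copies of $G(\Lambda)$. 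The universal property of the coproduct then determines $\phi_{\Lambda^{(I)}}$ uniquely from the family $\bigl(G(\iota_{j}) \circ \phi_{\Lambda}\bigr)_{j \in I}$, hence from $\phi_{\Lambda}$ alone.

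For the second step, let $B$ be an arbitrary left $\Lambda$-module and choose an epimorphism $p : \Lambda^{(I)} \twoheadrightarrow B$ with $\Lambda^{(I)}$ free. Naturality of $\phi$ at $p$ gives the commutative square whose top row is $A \otimes p$, whose bottom row is $G(p)$, and whose left and right vertical maps are $\phi_{\Lambda^{(I)}}$ and $\phi_{B}$, respectively. Because $A \otimes \blank$ is right-exact, the top row is an epimorphism, so $\phi_{B}$ is the unique arrow making the square commute. Thus $\phi_{B}$ is determined by $\phi_{\Lambda^{(I)}}$, which, by the first step, is determined by $\phi_{\Lambda}$.

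I do not expect any serious obstacle. The only point worth highlighting is the asymmetry in the hypotheses: $G$ is assumed to preserve direct sums, whereas $A \otimes \blank$ gets this for free as a left adjoint. This is what lets the argument in the first step apply to \emph{arbitrary} (not merely finite) free modules $\Lambda^{(I)}$, and hence makes the free-presentation argument in the second step work for every module $B$.
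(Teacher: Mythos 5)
Your proof is correct and follows essentially the same two-step reduction as the paper: first determine $\phi$ on free modules using the direct-sum hypotheses, then extend to arbitrary modules via a free presentation, naturality, and the right-exactness of $A \otimes \blank$. You merely spell out the first step in more detail than the paper's terse treatment.
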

\begin{proof}
 It follows from the assumptions that $\phi$ is uniquely determined on free left modules. To see that the same is true for any left module $X$, take a free presentation of $X$, apply $\phi$, and use the naturality of $\phi$ together with the fact that the tensor product is right-exact.
\end{proof}

Given a homomorphism $f : A \to B$ of right modules, we have a natural transformation $f \otimes \blank : A \otimes \blank \lra B \otimes \blank$. This yields a map $\alpha : (A, B) \lra (A \otimes \blank, B \otimes \blank)$. On the other hand, restriction to the $\prescript{}{\Lambda}\Lambda$-components yields a map $\beta$ in the opposite direction. The following known result is now easily checked.

\begin{corollary}
 The maps $\alpha$ and $\beta$ are inverses of each other. \qed
\end{corollary}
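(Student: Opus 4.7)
The plan is to verify the two composite identities $\beta\alpha = \mathrm{id}_{(A,B)}$ and $\alpha\beta = \mathrm{id}_{(A\otimes\blank, B \otimes \blank)}$ separately, relying entirely on the preceding Lemma together with the canonical isomorphism $M \otimes_{\Lambda} \prescript{}{\Lambda}\Lambda \simeq M$, natural in the right module $M$.

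First, I would observe that the tensor product functor $B \otimes \blank$ (being a left adjoint) preserves arbitrary direct sums, so the Lemma applies with $G := B \otimes \blank$. Consequently, every natural transformation $A \otimes \blank \lra B \otimes \blank$ is determined by its value on $\prescript{}{\Lambda}\Lambda$. This is the only nontrivial input; the rest is bookkeeping through the canonical identifications $A \otimes \Lambda \simeq A$ and $B \otimes \Lambda \simeq B$, which together allow one to identify the $\Lambda$-component of any natural transformation $A \otimes \blank \lra B \otimes \blank$ with a homomorphism $A \lra B$, and conversely.

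For $\beta\alpha = \mathrm{id}$, I would start with $f : A \lra B$ and compute the $\Lambda$-component of $\alpha(f) = f \otimes \blank$. This component is $f \otimes \mathrm{id}_{\Lambda} : A \otimes \Lambda \lra B \otimes \Lambda$, which, modulo the canonical isomorphism $M \otimes \Lambda \simeq M$, is just $f$. Hence $\beta\alpha(f) = f$.

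For $\alpha\beta = \mathrm{id}$, take a natural transformation $\phi : A \otimes \blank \lra B \otimes \blank$ and set $f := \beta(\phi)$, the induced map $A \simeq A \otimes \Lambda \lra B \otimes \Lambda \simeq B$. Then the natural transformations $\phi$ and $\alpha(f) = f \otimes \blank$ have, by the previous step, the same $\Lambda$-component, namely $f$. Since both source and target of each are direct-sum preserving functors, the Lemma forces $\phi = f \otimes \blank = \alpha\beta(\phi)$. I do not anticipate any genuine obstacle here: the entire argument reduces to invoking the Lemma once each direction. The only point that deserves care is keeping the canonical isomorphism $M \otimes \Lambda \simeq M$ straight so that ``$\Lambda$-component'' is unambiguously identified with a module homomorphism $A \lra B$; once this is spelled out, the two verifications are immediate.
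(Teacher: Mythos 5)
Your argument is correct and is exactly the intended "easily checked" verification: the paper itself offers no proof, simply marking the statement \textsc{qed}, so your use of the preceding Lemma (with $G := B \otimes \blank$) together with the canonical identification $M \otimes_{\Lambda} \Lambda \simeq M$ is precisely what is meant. One small remark: when you invoke the Lemma to conclude $\phi = f \otimes \blank$, only the \emph{target} functor needs to preserve direct sums, not the source; and the identification of a $\Lambda$-component with an honest right $\Lambda$-module homomorphism $A \to B$ (rather than a mere homomorphism of abelian groups) rests on naturality with respect to the left-$\Lambda$-linear right-multiplications $r_{\lambda} : \Lambda \to \Lambda$ --- a point you flag as "keeping the canonical isomorphism straight" and which is implicitly built into the paper's definition of $\beta$.
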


As a consequence, the defining exact sequence for the TC functor $F$ can now be rewritten as
\[
0 \lra F \lra A \otimes \blank \overset{f \otimes \blankd} \lra B \otimes \blank,
\]
with $f : A \to B$ \textcolor{red}{uniquely determined by the copresentation}. 
Moreover, using the right-exactness of the tensor product, we have a tensor coresolution 
\[
0 \lra F \lra A \otimes \blank \overset{f \otimes \blankd}\lra B \otimes \blank \lra C \otimes \blank \lra 0
\]
of $F$, where $C := \Cok f$.

Suppose now that $F$ is given by the above data. Our goal is to produce a tensor copresentation of the quot-stabilization $\underline{F}$ of $F$. Let $K_{f} := \Ker (f : A \to B)$. The exact commutative diagram 
\[
\begin{tikzcd}
    0 \ar[r]
 & K_{f} \ar[r]
 & A \ar[rr, "f"] \ar[rd, twoheadrightarrow, "e"']
 & 
 & B \ar[r]
 & C \ar[r]
 & 0,
\\
&
&
&D \ar[ru, rightarrowtail, "m"']
\end{tikzcd}
\]
where the middle triangle is an epi-mono factorization of $f$, gives rise, after applying the tensor product functor, to another commutative diagram with exact rows and columns:
\begin{equation}\label{D:quotTC}
 \begin{tikzcd}[cramped, sep=small]
& K_{f} \otimes \blank \ar[d, "\lambda_{F}"'] \ar[r, equal]
& K_{f} \otimes \blank \ar[d]
\\
 0 \ar[r]
 & F \ar[r] \ar[dd, "p"']
 & A \otimes \blank \ar[rr, "f \otimes \blankd" near start] \ar[rd, twoheadrightarrow] \ar[dd, twoheadrightarrow, "e \otimes \blankd"']
 & 
 & B \otimes \blank \ar[r] \ar[dd, equal]
 & C \otimes \blank \ar[r] \ar[dd, equal]
 & 0
\\
&
&
& X \ar[ru, rightarrowtail] \ar[dd, equal]
\\
0 \ar[r]
 & K \ar[r]
 & D \otimes \blank \ar[rr,  "m \otimes \blankd" near start] \ar[rd, twoheadrightarrow]
 & 
 & B \otimes \blank \ar[r]
 & C \otimes \blank \ar[r]
 & 0
\\
&
&
& X \ar[ru, rightarrowtail]
\end{tikzcd}
\end{equation}
Here the two triangles are epi-mono factorizations, $K := \Ker (m \otimes \blank)$, the map $F \to K$ is induced by the middle square, and the map $\lambda_{F}$ is defined by the universal property of kernels. We are now ready to prove the main result of this section.
\begin{theorem}
 In the above diagram, $K \simeq \underline{F}$ and the bottom row is a tensor copresentation of $\underline{F}$.
\end{theorem}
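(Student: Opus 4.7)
My plan is to identify the quot-stabilization $\underline{F} = \coker \lambda_{F}$ with $K$ by producing a natural surjection $F \twoheadrightarrow K$ whose kernel is exactly $\Imr \lambda_{F}$, and then to read off the tensor copresentation from the bottom row of diagram~\eqref{D:quotTC} itself.

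The first step is to confirm that $L_{0}F \simeq K_{f}\otimes \blank$ and that the natural transformation $\lambda_{F}:L_{0}F \lra F$ indeed coincides with the transformation drawn in~\eqref{D:quotTC}. For this, observe that $F$ agrees with $K_{f}\otimes\blank$ on projectives (since the tensor product is exact on projectives, and $K_{f}\otimes P = \Ker(f\otimes P) = F(P)$). Since $L_{0}F$ is right-exact and agrees with $F$ on projectives, while $K_{f}\otimes\blank$ is right-exact with the same values on projectives, computing both via a projective presentation $P_{1}\to P_{0}\to X\to 0$ gives a canonical isomorphism $L_{0}F \simeq K_{f}\otimes\blank$. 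The inclusion $K_{f}\hookrightarrow A$ induces a map $K_{f}\otimes\blank \lra A\otimes\blank$ whose composition with $f\otimes\blank$ vanishes, so it factors through $F$; this is the map $\lambda_{F}$ in the diagram.

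The second step is to analyze the image and cokernel of $\lambda_{F}$. The right-exact sequence $K_{f}\otimes X \lra A\otimes X \twoheadrightarrow D\otimes X \to 0$ (obtained by tensoring $0\to K_{f}\to A\xrightarrow{e} D\to 0$) shows that $\Imr(\lambda_{F}(X))$ equals $\Ker(e\otimes X)$, viewed as a subgroup of $F(X)\subseteq A\otimes X$. Now the composite
\[
F \hookrightarrow A\otimes\blank \xrightarrow{e\otimes \blankd} D\otimes\blank
\]
takes values in $K = \Ker(m\otimes\blank)$, because $(m\otimes\blank)(e\otimes\blank) = f\otimes\blank$ annihilates $F$. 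This yields the natural transformation $p:F\lra K$ appearing in~\eqref{D:quotTC}, whose kernel at $X$ is $F(X)\cap \Ker(e\otimes X) = \Imr \lambda_{F}(X)$ by the preceding sentence.

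The third step is surjectivity of $p$: given $k\in K(X)\subseteq D\otimes X$, lift it along the epimorphism $e\otimes X : A\otimes X\twoheadrightarrow D\otimes X$ to some $a\in A\otimes X$; then $(f\otimes X)(a) = (m\otimes X)(k) = 0$, so $a\in F(X)$ and $p(a)=k$. Combining the previous two steps produces a short exact sequence of functors $0 \to \Imr \lambda_{F} \to F \to K \to 0$, and hence a canonical isomorphism $\underline{F} = \coker \lambda_{F} \simeq K$. The bottom row of~\eqref{D:quotTC} is exact by construction ($K$ being defined as the kernel of $m\otimes\blank$, with the remaining exactness provided by the right-exact sequence coming from $0\to D\to B\to C\to 0$), so substituting $\underline{F}$ for $K$ exhibits the desired tensor copresentation. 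The only subtle point in the argument is making sure $\lambda_{F}$ in the diagram really is the zeroth left-derived transformation; once that identification is in place, everything reduces to the elementary epi–lifting diagram chase just described.
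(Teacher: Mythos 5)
Your proof is correct and reaches the same conclusion, but the technique differs from the paper's in a noticeable way. The paper notices that the first two columns of diagram~\eqref{D:quotTC}, namely $K_{f}\otimes\blank \to F \to K$ and $K_{f}\otimes\blank \to A\otimes\blank \to D\otimes\blank$, together with the column $0 \to X \to X$ formed by the common image of $f\otimes\blank$ and $m\otimes\blank$, assemble into a short exact sequence of three-term complexes. Since the quotient column $0\to X\to X$ is exact, the inclusion of the first column into the second is a quasi-isomorphism, and the vanishing of the homology of $K_{f}\otimes\blank \to A\otimes\blank \to D\otimes\blank$ in its last two slots (right-exactness of the tensor product applied to $0\to K_{f}\to A\to D\to 0$) immediately gives $\operatorname{Ker}p = \operatorname{Im}\lambda_{F}$ and surjectivity of $p$ in one stroke; the identification of $\lambda_{F}$ as the zeroth left-derived transformation is recorded at the end. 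You instead begin by pinning down $\lambda_{F}$ as the zeroth left-derived transformation, and then verify $\operatorname{Ker}p = \operatorname{Im}\lambda_{F}$ and the surjectivity of $p$ by a direct element chase (lifting $k\in K(X)$ along the epimorphism $e\otimes X$ and checking that the lift lands in $F(X)$). The paper's approach is the abstract, coordinate-free version of your chase; your version is more elementary and perhaps more transparent at the level of elements, while the paper's packages the same three verifications into a single application of the long exact homology sequence. Both are sound; neither requires anything the other does not already implicitly contain.

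One small point worth tightening in your write-up: in Step 1, ``agrees with $F$ on projectives'' should really be ``is naturally isomorphic to $F$ on projectives,'' and the conclusion that two right-exact functors agreeing on projectives are isomorphic uses that the induced maps on projective presentations are compatible with the natural isomorphism — this is true, but it is the uniqueness of the zeroth left-derived functor in disguise, and stating it that way would make the identification of $\lambda_F$ with the universal transformation cleaner.
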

\begin{proof}
 We only need to prove the first assertion. We view the first three columns and the maps between them as a short exact sequence of complexes. Since the third column is exact, the map between the first two is a quasi-isomorphism (i.e., induces an isomorphism on the homology). In particular, the first column is exact at $F$ and $p = \coker \lambda_{F}$. Furthermore, $\lambda_{F}$ is clearly an isomorphism on projectives. Together with the fact that  $K_{f} \otimes \blank$ is right-exact, it shows that $\lambda_{F}$ is the zeroth left-derived transformation for $F$, and therefore $K$ is the quot-stabilization of $F$.
\end{proof}

\section{Universal coefficient theorems for homology}\label{S:ucfh}
For another illustration of Theorem~\ref{T:co-fund-left} we look at the classical universal  coefficient formula for homology. Let $\Lambda$ be an arbitrary ring and $(\mathbf{C}, d)$ a complex of flat right $\Lambda$-modules with flat boundaries. We keep the notation from Section~\ref{S:ucfc}.  Given an integer $n$, we have an additive functor $F := \mathrm{H}_{n}(\mathbf{C} \otimes \blank)$ on left modules with values in abelian groups.
 
 Applying the tensor embedding to the canonical exact sequence
 \[
\begin{tikzcd}
 0 \ar[r]
 & \mathrm{H}_{n}(\mathbf{C}) \ar[r]
 & \mathrm{C}_{n}/\mathrm{B}_{n} \ar[r] \ar[dr, two heads]
 & \mathrm{Z}_{n-1} \ar[r]
 & \mathrm{H}_{n-1}(\mathbf{C}) \ar[r]
 & 0
\\
  &
  &
  & \mathrm{B}_{n-1} \ar[u, tail]
\end{tikzcd}
\]
we have a right-exact four-term complex of additive functors in the middle row of the commutative diagram of natural transformations 
\[
\begin{tikzcd}
& \mathrm{B}_{n} \otimes \blank \ar[d]
\\
& \mathrm{C}_{n} \otimes \blank \ar[d, two heads, "\pi"] \ar[rd, "\iota \delta \pi"]
\\
 \mathrm{H}_{n}(\mathbf{C}) \otimes \blank  \ar[r, tail, "\varepsilon"] \ar[dr, dashed]
 & \mathrm{C}_{n}/\mathrm{B}_{n} \otimes \blank \ar[r, "\iota \delta"] \ar[dr, two heads, "\delta"']
 & \mathrm{Z}_{n-1} \otimes \blank \ar[r, two heads]
 & \mathrm{H}_{n-1}(\mathbf{C}) \otimes \blank 
\\
 & \Ker \iota \delta \ar[u, tail] \ar[dr, dashed]
 & \mathrm{B}_{n-1} \otimes \blank \ar[u, "\iota"']
\\
 &
 & \Tor_{1}(\mathrm{H}_{n-1}(\mathbf{C}), \blank) \ar[u, tail, "\gamma"']
\end{tikzcd}
\]
Notice that $(\varepsilon, \delta)$ is a short exact sequence since $\mathrm{B}_{n-1}$ is flat by assumption. Likewise, $\gamma = \ker \iota$ by the flatness of $\mathrm{Z}_{n-1}$. The dashed arrows arise from the universal property of kernels. They are also part of the circular sequence for $\delta$ and~$\iota$, which yields a short exact sequence 
\[
0 \lra \mathrm{H}_{n}(\mathbf{C}) \otimes \blank \lra \Ker \iota \delta \lra 
\Tor_{1}(\mathrm{H}_{n-1}(\mathbf{C}), \blank) \lra 0.
\]
From the circular sequence for $\pi$ and $\iota \delta$ we have a short exact sequence 
\[
0 \lra \Ker \pi \lra \Ker \iota \delta \pi \lra \Ker  \iota \delta \lra 0.
\]
By the right-exactness of the tensor product (applied twice), we have 
$\Ker \pi = \Imr d_{n +1} \otimes \blank$. On the other hand, since $B_{n-2}$ is flat, the natural transformation $\mathrm{Z}_{n-1} \otimes \blank \lra
\mathrm{C}_{n-1} \otimes \blank$ is monic and therefore $\Ker \iota \delta \pi = 
\Ker d_{n} \otimes \blank$. It follows that 
$ \Ker \iota \delta = \mathrm{H}_{n}(\mathbf{C} \otimes \blank)$ and we have a short exact sequence of natural transformations 
\begin{equation}\label{D:ucf-homology}
0 \lra \mathrm{H}_{n}(\mathbf{C}) \otimes \blank \lra \mathrm{H}_{n}(\mathbf{C} \otimes \blank) \lra \Tor_{1}(\mathrm{H}_{n-1}(\mathbf{C}), \blank) \lra 0.
\end{equation}
The first map in this sequence is an isomorphism on $\Lambda$. Since 
$\mathrm{H}_{n}(\mathbf{C} \otimes \blank)$ preserves direct sums, it is also an isomorphism on projectives, which, together with the fact that 
$\mathrm{H}_{n}(\mathbf{C}) \otimes \blank$ is right-exact, shows that this map is the zeroth right-derived transformation for $\mathrm{H}_{n}(\mathbf{C} \otimes \blank)$. Therefore we have isomorphisms 
\[
\mathrm{H}_{n}(\mathbf{C}) \otimes \blank \simeq L_{0} \mathrm{H}_{n}(\mathbf{C} \otimes \blank) \quad \textrm{and} \quad \underline{\mathrm{H}_{n}(\mathbf{C} \otimes \blank)} \simeq \Tor_{1}(\mathrm{H}_{n-1}(\mathbf{C}), \blank).
\]
Comparison of this sequence with the fundamental 
sequence~\eqref{D:lfs-co-fun}, shows that $\alpha_{F} : \underline{F} \circ 
\Omega \lra L_{0}F$ is zero in this case. On the other hand, examples of complexes with zero differentials over rings of global dimension at least 2 show that 
$\underline{F} \circ \Omega = \Tor_{2}(\mathrm{H}_{n-1}(\mathbf{C}), \blank)$ need not be zero.

Assume now that $\Lambda$ is left hereditary and $\mathbf{C}$ is a complex of projectives. In this case, the cycles and the boundaries are projective and the previous arguments apply. Furthermore, the short exact sequence $(\varepsilon, \delta)$ becomes split and therefore~\eqref{D:ucf-homology} splits as well. 

Similar to the universal coefficient theorem for cohomology, there is an alternative approach to universal coefficients for homology.  While the former is based on the fact that cohomology is a finitely presented functor, which allows to immediately determine its right-derived functors, the latter is based on the facts that homology is tensor-copresented and the just mentioned isomorphism 
\[
\mathrm{H}_{n}(\mathbf{C}) \otimes \blank \simeq L_{0} \mathrm{H}_{n}(\mathbf{C} \otimes \blank)
\]
is valid without any assumptions on the complex $\mathbf{C}$. Hence, we have isomorphisms 
\[
L_{i} \mathrm{H}_{n}(\mathbf{C} \otimes \blank) \simeq \Tor_{i}\big(\mathrm{H}_{n}(\mathbf{C}), \blank\big)
\]
for all $i$. 

First, we determine a tensor copresentation for $\mathrm{H}_{n}(\mathbf{C} \otimes \blank)$. Applying the tensor embedding to the commutative diagram 
\[
\begin{tikzcd}
&
 & C_{n}/B_{n} \ar[dr]
  &
   & C_{n-1}/B_{n-1}
\\
 \ldots \ar[r]
 & C_{n} \ar[rr, "d_{n}"] \ar[rd, twoheadrightarrow] \ar[ur, twoheadrightarrow]
  &
   & C_{n-1} \ar[r] \ar[ur, twoheadrightarrow]
    & \ldots
\\
 B_{n} \ar[ur, rightarrowtail]
 &
  & B_{n-1} \ar[ur, rightarrowtail]
\end{tikzcd}
\]
we have a commutative diagram 
\[
\begin{tikzcd}[cramped, sep=small]
&
&
 & C_{n}/B_{n} \otimes \blank \ar[dr, "r_{n}"]
  &
   & C_{n-1}/B_{n-1} \otimes \blank
\\
 C_{n+1} \otimes \blank \ar[rr, "d_{n+1} \otimes \blankd"] \ar[rd, twoheadrightarrow]
 &
 & C_{n} \otimes \blank \ar[rr, "d_{n} \otimes \blankd"] \ar[rd, twoheadrightarrow] \ar[ur, "p_{n}", twoheadrightarrow]
  &
   & C_{n-1} \otimes \blank \ar[r] \ar[ur, "p_{n-1}", twoheadrightarrow]
    & \ldots
\\
 & B_{n} \otimes \blank \ar[ur, "j_{n}"']
 &
  & B_{n-1} \otimes \blank \ar[ur, "j_{n-1}"']
\end{tikzcd}
\]
with right-exact diagonals (by the right-exactness of the tensor product). Since $r_{n}$ 
and~$j_{n-1}$ have the same image, the sequence
\begin{equation}\label{D:head}
C_{n}/B_{n} \otimes \blank \overset{r_{n}}\lra C_{n-1} \otimes \blank \overset{p_{n-1}}\lra C_{n-1}/B_{n-1} \otimes \blank \lra 0
\end{equation}
is exact. We want to determine the kernel of $r_{n}$. The circular sequence of the pair $p_{n}, r_{n}$ yields a short exact sequence 
\[
0 \lra \Ker p_{n} \lra \Ker (d_{n} \otimes \blank) \lra \Ker r_{n} \lra 0
\]
Since the image of $j_{n}$ coincides with the image of $d_{n+1} \otimes \blank$, we have an isomorphism $\Ker r_{n} \simeq \mathrm{H}_{n}(\mathbf{C} \otimes \blank)$.
Taking into account~\eqref{D:head}, we have a tensor copresentation of 
$\mathrm{H}_{n}(\mathbf{C} \otimes \blank)$:
\begin{equation}\label{D:hom-copres}
   0 \lra \mathrm{H}_{n}(\mathbf{C} \otimes \blank) \lra C_{n}/B_{n} \otimes \blank \overset{r_{n}}\lra C_{n-1} \otimes \blank \overset{p_{n-1}}\lra C_{n-1}/B_{n-1} \otimes \blank \lra 0.
\end{equation}
Since the image of the map $C_{n}/B_{n} \to C_{n-1}$ induced by the differential is $B_{n-1}$, we have, by~\eqref{D:quotTC}, a tensor copresentation of $\underline{\mathrm{H}_{n}(\mathbf{C} \otimes \blank)}$
\begin{equation}\label{D:q-stab-hom}
0 \lra \underline{\mathrm{H}_{n}(\mathbf{C} \otimes \blank)} \lra B_{n-1} \otimes \blank \overset{j_{n-1}}\lra C_{n-1} \otimes \blank \overset{p_{n-1}}\lra C_{n-1}/B_{n-1} \otimes \blank \lra 0.
\end{equation}
In other words, the tensor copresentation of $\underline{\mathrm{H}_{n}(\mathbf{C} \otimes \blank)}$ is given by the degree $n-1$ diagonal in the foregoing commutative diagram. 

In summary, we arrive at

\begin{namedthm*}{Universal Coefficient Theorem for Homology of Arbitrary Complexes}
Let $\mathbf{C}$ be an arbitrary complex. Then the left fundamental sequence for 
$\mathrm{H}_{n}(\mathbf{C} \otimes \blank)$ is of the form

\begin{equation}\label{D:lfs-ho-fun}
\begin{tikzcd}[cramped, sep=small]
  \ldots \ar[r] 
      &  S_{3}\mathrm{H}_{n}(\mathbf{C} \otimes \blank) \ar[r]
      			  	\ar[d, phantom, ""{coordinate, name=Z}]
         & \underline{\mathrm{H}_{n}(\mathbf{C} \otimes \blank)}\circ \Omega^{3} \ar[dll, rounded corners,
                                   to path={ -- ([xshift=2ex]\tikztostart.east)
						|- (Z) [pos=0.3]\tikztonodes
						-| ([xshift=-2ex]\tikztotarget.west) 
						-- (\tikztotarget)}] 
\\
   \Tor_{2}\big(\mathrm{H}_{n}(\mathbf{C}), \blank\big) \ar[r]   
      & S_{2}\mathrm{H}_{n}(\mathbf{C} \otimes \blank) \ar[r]
      			  	\ar[d, phantom, ""{coordinate, name=Y}]
         & \underline{\mathrm{H}_{n}(\mathbf{C} \otimes \blank)}\circ \Omega^{2} \ar[dll, rounded corners,
                                   to path={ -- ([xshift=2ex]\tikztostart.east)
						|- (Y) [near end]\tikztonodes
						-| ([xshift=-2ex]\tikztotarget.west) 
						-- (\tikztotarget)}] 
\\
  \Tor_{1}\big(\mathrm{H}_{n}(\mathbf{C}), \blank\big) \ar[r]			
      &   S_{1}\mathrm{H}_{n}(\mathbf{C} \otimes \blank)	\ar[r]
      				\ar[d, phantom, ""{coordinate, name=X}]
         & \underline{\mathrm{H}_{n}(\mathbf{C} \otimes \blank)}\circ \Omega  \ar[dll, 	
         rounded corners,
                                   to path={ -- ([xshift=2ex]\tikztostart.east)
						|- (X) [pos=0.3]\tikztonodes
						-| ([xshift=-2ex]\tikztotarget.west) 
						-- (\tikztotarget)}]
\\
   \mathrm{H}_{n}(\mathbf{C}) \otimes \blank \ar[r]              	 
      & \mathrm{H}_{n}(\mathbf{C} \otimes \blank) \ar[r]			
         &  \underline{\mathrm{H}_{n}(\mathbf{C} \otimes \blank)} \ar[r]
            & 0,
\end{tikzcd}
\end{equation}
\noindent where $ \underline{\mathrm{H}_{n}(\mathbf{C} \otimes \blank)}$ can be computed by~\eqref{D:hom-copres}. This sequence is exact except, possibly, at the leftmost terms. If $\mathrm{H}_{n}(\mathbf{C} \otimes \blank)$ is half-exact\footnote{This happens, for example, when $\mathbf{C}$ is a complex of flats, see Lemma~\ref{L:flat} below.}, then this sequence is exact.  \qed
\end{namedthm*}

Our next goal is to provide more detailed information about the right satellites and the quot-stabilizations of the homology functor under the assumption that $\mathbf{C}$ is a flat complex. The next three results are entirely analogous to their cohomological counterparts from Section~\ref{S:ucfc}, and we omit their proofs.

We begin with a general observation.
\begin{lemma}
Suppose $(F_{i})_{i \in \Z}$ is a (co)homological functor, i.e., an exact $\delta$-functor. Then, for each integer $n$,  the natural transformation $\eta_{n} : \underline{F_{n+1}} \lra S_{1}F_{n}$ induced by~$\delta_{n+1}$ is an isomorphism. \qed
\end{lemma}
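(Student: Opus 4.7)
The plan is to mimic the one-line proof of Lemma~\ref{L:S=stab}, with projective covers and syzygies replacing injective envelopes and cosyzygies, and with cokernels replacing kernels. Fix a module $B$ and choose a short exact sequence
\[
0 \lra \Omega B \lra P \lra B \lra 0
\]
with $P$ projective. Applying the $\delta$-functor and focusing on the four-term fragment of the resulting long exact sequence
\[
F_{n+1}(P) \lra F_{n+1}(B) \overset{\delta_{n+1}}{\lra} F_{n}(\Omega B) \lra F_{n}(P)
\]
already isolates both objects of interest.

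Next, I would identify $\underline{F_{n+1}}(B)$ with $\Imr \delta_{n+1}$. By the description~\eqref{E:i-stab-via-ancestor}, $\underline{F_{n+1}}(B) = \Cok(F_{n+1}(P) \to F_{n+1}(B))$, and by exactness of the displayed fragment at $F_{n+1}(B)$ this cokernel is canonically isomorphic to $\Imr \delta_{n+1}$. Dually, $S_{1}F_{n}(B) \simeq \Ker(F_{n}(\Omega B) \to F_{n}(P))$, as visible in diagram~\eqref{D:circ-diag-quot}, and by exactness at $F_{n}(\Omega B)$ this kernel also coincides with $\Imr \delta_{n+1}$. Composing the two identifications produces an isomorphism $\eta_{n}(B) : \underline{F_{n+1}}(B) \lra S_{1}F_{n}(B)$ induced by $\delta_{n+1}$.

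It remains to verify that $\eta_{n}$ is a natural transformation. This reduces to two routine checks: independence of the auxiliary choice of $P$, and functoriality in $B$. Both follow from the standard comparison of projective covers up to homotopy that underlies the definitions of $\underline{F_{n+1}}$ and $S_{1}F_{n}$, together with the naturality of $\delta_{n+1}$ with respect to morphisms of short exact sequences.

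I do not anticipate any essential obstacle, since the argument is entirely parallel to Lemma~\ref{L:S=stab}. The only mildly delicate point is the bookkeeping one of confirming that the isomorphism assembled from $\delta_{n+1}$ really is the transformation $\eta_{n}$ referenced in the statement; this is immediate once the two characterizations of $\underline{F_{n+1}}(B)$ and $S_{1}F_{n}(B)$ are spelled out against the chosen projective presentation.
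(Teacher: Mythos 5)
Your argument is correct and is exactly the dualization the paper has in mind: the paper omits the proof, pointing to Lemma~\ref{L:S=stab}, whose one-line proof says to choose an appropriate short exact sequence, apply the $\delta$-functor, and read off the long exact sequence; you have simply spelled out the bookkeeping (identifying both $\underline{F_{n+1}}(B)$ and $S_{1}F_{n}(B)$ with $\Imr\delta_{n+1}$ via exactness at $F_{n+1}(B)$ and at $F_{n}(\Omega B)$). No discrepancy in approach.
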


\begin{lemma}\label{L:flat}
 Suppose $\mathbf{C}$ is a flat complex. Then $(\mathrm{H}_{i}(\mathbf{C} \otimes \blank))_{i \in \Z}$ is cohomological functor, and, for each integer $n$, the canonical natural transformation 
 \[
 \tau_{n-1} :  \Tor_{1}(\mathrm{C}_{n-1}/\mathrm{B}_{n-1},\blank) \lra 
 \underline{\mathrm{H}_{n}(\mathbf{C} \otimes \blank})
 \]
 is an isomorphism. \qed
\end{lemma}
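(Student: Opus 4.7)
The plan is to mirror the proof of Lemma~\ref{L:stab=Ext}, with tensor products in place of Hom functors and the direction of arrows reversed.

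For the first assertion, I would exploit that $\mathbf{C}$ is a complex of flats, so each $\mathrm{C}_i \otimes \blank$ is an exact functor. Applied termwise to a short exact sequence of left modules $0 \lra A \lra B \lra D \lra 0$, the complex $\mathbf{C}$ produces a short exact sequence of complexes of abelian groups $0 \lra \mathbf{C} \otimes A \lra \mathbf{C} \otimes B \lra \mathbf{C} \otimes D \lra 0$. Taking the associated long exact sequence in homology supplies the connecting homomorphisms and the exact triangles that make $(\mathrm{H}_i(\mathbf{C} \otimes \blank))_{i \in \Z}$ into an exact $\delta$-functor, which is the homological analogue of a cohomological functor.

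For the second assertion, the strategy is to produce a second tensor copresentation of $\underline{\mathrm{H}_n(\mathbf{C} \otimes \blank)}$ whose leading term is $\Tor_1(\mathrm{C}_{n-1}/\mathrm{B}_{n-1}, \blank)$, then compare it with~\eqref{D:q-stab-hom}. Applying the tensor embedding to the short exact sequence
\[
0 \lra \mathrm{B}_{n-1} \lra \mathrm{C}_{n-1} \lra \mathrm{C}_{n-1}/\mathrm{B}_{n-1} \lra 0
\]
and passing to the $\Tor$ long exact sequence, one uses the flatness of $\mathrm{C}_{n-1}$ to kill $\Tor_1(\mathrm{C}_{n-1}, \blank)$. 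What remains is a four-term exact sequence
\[
0 \lra \Tor_1(\mathrm{C}_{n-1}/\mathrm{B}_{n-1}, \blank) \lra \mathrm{B}_{n-1} \otimes \blank \overset{j_{n-1}}\lra \mathrm{C}_{n-1} \otimes \blank \overset{p_{n-1}}\lra \mathrm{C}_{n-1}/\mathrm{B}_{n-1} \otimes \blank \lra 0
\]
whose rightmost three terms and the maps between them coincide with those of the tensor copresentation~\eqref{D:q-stab-hom} of $\underline{\mathrm{H}_n(\mathbf{C} \otimes \blank)}$. The uniqueness of kernels then delivers the canonical isomorphism $\tau_{n-1}$.

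The only nontrivial point is to check that the isomorphism extracted from this kernel comparison agrees with the transformation $\tau_{n-1}$ as defined in the underlying framework, rather than merely being some abstract isomorphism of functors. This is settled by inspecting the construction of $\underline{F}$ through diagram~\eqref{D:quotTC}: both maps arise from the universal property of the kernel of $j_{n-1}$, so naturality in the module argument, together with the fact that $\mathrm{B}_{n-1} \lra \mathrm{C}_{n-1}$ factors as the epi-mono factorization of the differential $d_n$, forces the two kernels to agree as subfunctors of $\mathrm{B}_{n-1} \otimes \blank$. I expect this verification, rather than any deep step, to be the principal technical obstacle.
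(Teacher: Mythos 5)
Your proof is correct and coincides with the paper's intended argument. The paper explicitly omits the proof of this lemma, noting only that it is ``entirely analogous to its cohomological counterpart'' (Lemma~\ref{L:stab=Ext}), and your two steps are precisely the tensor-product dualizations of that proof: tensoring a short exact sequence of modules with the flat complex $\mathbf{C}$ to get a short exact sequence of complexes (hence a $\delta$-functor on homology), and applying the tensor embedding to $0 \to \mathrm{B}_{n-1} \to \mathrm{C}_{n-1} \to \mathrm{C}_{n-1}/\mathrm{B}_{n-1} \to 0$, killing $\Tor_1(\mathrm{C}_{n-1}, \blank)$ by flatness, and comparing the resulting four-term exact sequence with the tensor copresentation~\eqref{D:q-stab-hom}. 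Your closing caution about whether the resulting isomorphism is ``the'' canonical $\tau_{n-1}$ is reasonable but not really an obstacle: both identifications express the leading term as the kernel of $j_{n-1} = i \otimes \blank$ in $\mathrm{B}_{n-1} \otimes \blank$, and the paper's own cohomological analogue treats the analogous identification of $\xi^{n-1}$ at the same level of informality.
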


\begin{proposition}
 Suppose $\mathbf{C}$ is a flat complex. Then the natural transformation  
 $ \eta_{n} : \underline{\mathrm{H}_{n+1}(\mathbf{C} \otimes \blank}) \lra S_{1}(\mathrm{H}_{n}(\mathbf{C} \otimes \blank))$ induced by $\delta_{n+1}$   is a functor isomorphism.  The composition
 \[
 \eta_{n} \tau_{n} :  \Tor_{1}(\mathrm{C}_{n}/\mathrm{B}_{n},\blank) \lra
 S_{1}(\mathrm{H}_{n}(\mathbf{C} \otimes \blank))
 \]
 is an isomorphism. \qed
 \end{proposition}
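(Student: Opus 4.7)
The plan is to mirror the proof of the analogous statement in the cohomological setting (the ``Universal Coefficient Theorem for Cohomology of Projective Complexes'' and the proposition that precedes it), replacing injectives with projectives, kernels with cokernels, $\Ext$ with $\Tor$, and right satellites with left satellites throughout.

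First, I would invoke the preceding lemma (\textbf{L:flat}) to conclude that, since $\mathbf{C}$ is a flat complex, the family $(\mathrm{H}_{i}(\mathbf{C} \otimes \blank))_{i \in \Z}$ is an exact $\delta$-functor. The underlying reason is that, for a flat complex, tensoring with $\mathbf{C}$ sends any short exact sequence of left modules to a short exact sequence of complexes (no $\Tor_1$ obstructions appear), and passing to homology then produces the required long exact sequence with functorial connecting homomorphisms $\delta_{n+1}$.

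Second, I would apply the general lemma about $\delta$-functors (the homological dual of \textbf{L:S=stab}, which asserts that for any exact $\delta$-functor $(F_i)_{i \in \Z}$ the transformation $\eta_n : \underline{F_{n+1}} \lra S_{1}F_{n}$ induced by $\delta_{n+1}$ is an isomorphism). Specializing this to $F_i := \mathrm{H}_{i}(\mathbf{C} \otimes \blank)$ immediately yields that
\[
\eta_{n} : \underline{\mathrm{H}_{n+1}(\mathbf{C} \otimes \blank)} \lra S_{1}(\mathrm{H}_{n}(\mathbf{C} \otimes \blank))
\]
is an isomorphism, which is the first assertion.

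For the second assertion, I would compose $\eta_n$ with the isomorphism $\tau_n : \Tor_{1}(\mathrm{C}_n/\mathrm{B}_n, \blank) \lra \underline{\mathrm{H}_{n+1}(\mathbf{C} \otimes \blank)}$ supplied by \textbf{L:flat} (applied with index shifted by one), and the composition $\eta_n \tau_n$ is then an isomorphism as the composite of two isomorphisms. There is essentially no obstacle here beyond bookkeeping: the two ingredient lemmas have already done the substantive work, one supplying the $\delta$-functor structure that converts $\underline{\phantom{X}}$ into $S_1$, and the other identifying the quot-stabilization of homology with a $\Tor_1$-functor in the flat case. The only subtlety worth verifying explicitly is that the natural transformation $\eta_n$ produced by the $\delta$-functor machinery agrees with the one named in the statement (i.e., the one arising from $\delta_{n+1}$), but this is built into the formulation of the general lemma.
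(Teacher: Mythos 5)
Your proposal is correct and matches the paper's intended argument: the paper explicitly states it omits the proof because it is "entirely analogous" to the projective-complex case, and you have reconstructed that analog precisely, invoking the $\delta$-functor lemma (the homological counterpart of Lemma~\ref{L:S=stab}) to identify $\underline{F_{n+1}}$ with $S_1 F_n$, and Lemma~\ref{L:flat} (with the index shifted by one) to supply the isomorphism $\tau_n$. Nothing is missing.
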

 
 Summarizing the foregoing discussion and utilizing dimension shift, we have
 
\begin{namedthm*}{Universal Coefficient Theorem for Homology of Flat Complexes}
Let $\mathbf{C}$ be a flat complex. Then the left fundamental sequence for 
$\mathrm{H}_{n}(\mathbf{C} \otimes \blank)$ is exact and of the form
\begin{equation}
\begin{tikzcd}[cramped, sep=small]
  \ldots \ar[r] 
      &  \Tor_{3}(\mathrm{C}_{n}/\mathrm{B}_{n},\blank) \ar[r]
      			  	\ar[d, phantom, ""{coordinate, name=Z}]
         & \Tor_{4}(\mathrm{C}_{n-1}/\mathrm{B}_{n-1},\blank) \ar[dll, rounded corners,
                                   to path={ -- ([xshift=2ex]\tikztostart.east)
						|- (Z) [pos=0.3]\tikztonodes
						-| ([xshift=-2ex]\tikztotarget.west) 
						-- (\tikztotarget)}] 
\\
   \Tor_{2}\big(\mathrm{H}_{n}(\mathbf{C}), \blank\big) \ar[r]   
      & \Tor_{2}(\mathrm{C}_{n}/\mathrm{B}_{n},\blank) \ar[r]
      			  	\ar[d, phantom, ""{coordinate, name=Y}]
         & \Tor_{3}(\mathrm{C}_{n-1}/\mathrm{B}_{n-1},\blank) \ar[dll, rounded corners,
                                   to path={ -- ([xshift=2ex]\tikztostart.east)
						|- (Y) [near end]\tikztonodes
						-| ([xshift=-2ex]\tikztotarget.west) 
						-- (\tikztotarget)}] 
\\
  \Tor_{1}\big(\mathrm{H}_{n}(\mathbf{C}), \blank\big) \ar[r]			
      &   \Tor_{1}(\mathrm{C}_{n}/\mathrm{B}_{n},\blank)	\ar[r]
      				\ar[d, phantom, ""{coordinate, name=X}]
         & \Tor_{2}(\mathrm{C}_{n-1}/\mathrm{B}_{n-1},\blank)  \ar[dll, 	
         rounded corners,
                                   to path={ -- ([xshift=2ex]\tikztostart.east)
						|- (X) [pos=0.3]\tikztonodes
						-| ([xshift=-2ex]\tikztotarget.west) 
						-- (\tikztotarget)}]
\\
   \mathrm{H}_{n}(\mathbf{C}) \otimes \blank \ar[r]              	 
      & \mathrm{H}_{n}(\mathbf{C} \otimes \blank) \ar[r]			
         &  \Tor_{1}(\mathrm{C}_{n-1}/\mathrm{B}_{n-1},\blank) \ar[r]
            & 0
\end{tikzcd}
\end{equation}
\qed
\end{namedthm*} 

\begin{remark}
 Yuya Otake informed me (October 30, 2024) he had also obtained the UCT for homology of flat complexes, see~\cite[Corollary 3.2]{O-25}. 
\end{remark}

\section{The right fundamental sequence of a contravariant functor}\label{S:right-fs-contra}

Suppose $\mathcal{A}$ has enough projectives. For a contravariant functor $F : \mathcal{A} \lra \ab$, its \texttt{sub-stabilization} 
$\overline{F} : \mathcal{A} \lra \ab$ is defined as the kernel of the canonical natural transformation $\rho_{F} : F \lra R_{0}F$, whose codomain is the zeroth right-derived functor of $F$. Thus we have a defining exact sequence
\begin{equation}
0 \lra \overline{F} \lra F \overset{\rho_{F}}\lra R_{0}F,
\end{equation}
where the use of a subscript (as opposed to a superscript) simply indicates 
that~$F$ is contravariant. Since $\rho_{F}$ is an isomorphism on projectives, $\overline{F}$ vanishes on projectives. In fact, $\overline{F}$ is the largest subfunctor of $F$ with this property. It follows from Schanuel's lemma, that 
$\overline{F} \circ \Omega$, where~$\Omega$ denotes the syzygy operation, is a well-defined functor. 

Equivalently, given a $\Lambda$-module $B$, $\overline{F}(B)$ can be computed (at least, theoretically) by the following procedure. Cover $B$ by a projective ancestor $P \overset{\pi} \lra B \lra 0$, apply~$F$, and take the kernel. Thus we have defining exact sequences
\begin{equation}
 0 \lra \overline{F}(B) \lra {F}(B)  \overset{F(\pi)}\lra F(P).
\end{equation}

Similar to Section~\ref{S:left-fs-co}, we construct a natural transformation 
$\beta_{F} : R_{0}F \lra \overline{F} \circ \Omega$. To define its value on $B$, start with a projective resolution $\ldots \to P_{1} \overset{d_{1}} \to P_{0} \overset{\pi} \to B \to 0$, and let 
\[
\begin{tikzcd}
	P_{1}  \ar[rd, "e"'] \ar[rr, "d_{1}"]
	&
	& P_{0} 
\\
	& \Omega B \ar[ru, "m"']
\end{tikzcd}
\]
be an epi-mono factorization of $d_{1}$. Applying $F$, we have a commutative diagram 
\[
\begin{tikzcd}
 	R_{0}F(B) \ar[r, tail] \ar[d, dashrightarrow, "\beta_{F}(B)"']
	& F(P_{0}) \ar[r, "F(d_{1})"] \ar[d, "F(m)"]
	& F(P_{1}) \ar[d, equal]
\\
	\overline{F}(\Omega B) \ar[r, tail] \ar[dr]
	& F(\Omega B) \ar[r, "F(e)"'] \ar[d, two heads]
	& F(P_{1})
\\
	& S^{1}F(B)
\end{tikzcd}
\]
with exact rows, which gives rise to the desired natural transformation 
$\beta_{F}$. Similar to what we did in Section~\ref{S:left-fs-co}, we combine it with the defining sequence for $\overline{F}$ and extend the result to the right, 
thus obtaining a sequence
\begin{equation}\label{D:rfs-contra-fun}
\begin{tikzcd}
 0 \ar[r] 
   & \overline{F} \ar[r] 
      & F \ar[r, "\rho_{F}"] 
                  \ar[d, phantom, ""{coordinate, name=Z}]
         & R_{0}F \ar[dll, "\beta_{F}", rounded corners,
                                   to path={ -- ([xshift=2ex]\tikztostart.east)
						|- (Z) [pos=0.3]\tikztonodes
						-| ([xshift=-2ex]\tikztotarget.west) 
						-- (\tikztotarget)}] 
\\
   & \overline{F} \circ \Omega \ar[r] 
      & S^{1}F \ar[r]
      			  \ar[d, phantom, ""{coordinate, name=Y}]
         & R_{1}F \ar[dll, rounded corners,
                                   to path={ -- ([xshift=2ex]\tikztostart.east)
						|- (Y) [near end]\tikztonodes
						-| ([xshift=-2ex]\tikztotarget.west) 
						-- (\tikztotarget)}] 
\\
   & \overline{F} \circ \Omega^{2} \ar[r] 
      & S^{2}F  \ar[r]
      			  \ar[d, phantom, ""{coordinate, name=X}]
         & R_{2}F \ar[dll, rounded corners,
                                   to path={ -- ([xshift=2ex]\tikztostart.east)
						|- (X) [near end]\tikztonodes
						-| ([xshift=-2ex]\tikztotarget.west) 
						-- (\tikztotarget)}]
\\
   & \overline{F} \circ \Omega^{3} \ar[r] 
      & S^{3}F   \ar[r]
         & \ldots
\end{tikzcd}
\end{equation}

\begin{definition}
 We shall refer to~\eqref{D:rfs-contra-fun} as the right fundamental sequence of $F$.
\end{definition}

Accordingly, we have an analog of Theorem~\ref{T:co-fund-right}.

\begin{theorem}\hfill\label{T:contra-4-term-rho}
\begin{enumerate}
 \item\label{I:ends} The right fundamental sequence~\eqref{D:rfs-contra-fun} is a complex, whose homology is zero, except, possibly, at the terms   $R_{i}F$.
 
 \item If $F$ is half-exact, then the sequence is exact.
 
 \item If $F$ carries monomorphisms to epimorphisms, then $\beta_{F}$ is epic.
  
 \item If $F$ carries epimorphisms to monomorphisms, then $\overline{F} = 0$ and $\rho_{F}$ is monic.
 
 \item\label{I:lex} $\rho_{F}$ is an isomorphism if and only if $F$ is left-exact.

\end{enumerate}
\end{theorem}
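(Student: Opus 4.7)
The plan is to mirror the proof of Theorem~\ref{T:co-fund-right}, swapping injective coresolutions with projective resolutions, cokernels with kernels, cosyzygies~$\Sigma$ with syzygies~$\Omega$, and left satellites with right satellites. Starting from a projective resolution $\ldots \to P_{2} \overset{d_{2}}\lra P_{1} \overset{d_{1}}\lra P_{0} \overset{\pi}\lra B \to 0$, take the epi-mono factorizations $d_{i} = m_{i-1} e_{i}$ with $e_{i} : P_{i} \twoheadrightarrow \Omega^{i}B$ and $m_{i-1} : \Omega^{i}B \rightarrowtail P_{i-1}$, apply $F$, and assemble the resulting picture into a contravariant analog of~\eqref{D:circ-diag-extended}. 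The circular sequences attached to the composable pairs $\bigl(F(e_{i}), F(m_{i-1})\bigr)$ and their interaction with the cokernels defining the $R_{i}F$ are what yield the sequence~\eqref{D:rfs-contra-fun}.

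For claim~(1), every three consecutive terms of~\eqref{D:rfs-contra-fun} can be identified either with the defining sequence for $\overline{F}$, or with a segment of one of the above circular sequences combined with the universal comparison maps to the $R_{i}F$. The routine diagram chase from the covariant proof goes through in the reversed-arrow setting, so all consecutive compositions vanish. For claim~(2), exactness at $R_{0}F$ follows from the parallelogram
\[
\begin{tikzcd}
\overline{F}(\Omega B) \ar[r, tail] & F(\Omega B) \ar[d, "F(m_{1})", tail]
\\
R_{0}F(B) \ar[u, "\beta_{F}"] \ar[r, tail] & F(P_{0})
\end{tikzcd}
\]
in which half-exactness of $F$ forces $F(m_{1})$ to be monic and makes the lower peripheral sequence $R_{0}F(B) \to F(P_{0}) \to F(P_{1})$ exact at $F(P_{0})$; the Parallelogram Lemma~(b) transports exactness to the upper peripheral sequence, and since $\overline{F}(\Omega B) \hookrightarrow F(\Omega B)$ is monic this is equivalent to exactness at $R_{0}F(B)$. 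The same argument, shifted by one, handles each $R_{i}F$: the epi-mono factorization $d_{i+1} = m_{i}e_{i+1}$ and half-exactness yield an exact peripheral sequence that the parallelogram lemma converts into the required exactness.

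For~(3), if $F$ sends monomorphisms to epimorphisms, then $F(m_{1})$ is epic in the circular diagram attached to $(e_{1}, m_{1})$, which forces $S^{1}F(B) = \Cok F(m_{1}) = 0$; exactness of the circular sequence then forces $\beta_{F}$ to be epic. For~(4), any epimorphism $\pi : P \twoheadrightarrow B$ is sent by $F$ to a monomorphism, so $\overline{F}(B) = \Ker F(\pi) = 0$, and $\rho_{F}$ is monic by~(1). Claim~(5) is standard: the universal property of the zeroth right-derived functor shows that $\rho_{F}$ is an isomorphism precisely when $F$ is itself left-exact (here, contravariantly left-exact, i.e., $F$ converts cokernels into kernels).

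The main obstacle is purely combinatorial: carefully constructing the dashed comparison arrows in the contravariant extended circular diagram and verifying that their compositions vanish, so that~(1) holds on the nose rather than merely up to sign. Once that bookkeeping is in place, the Parallelogram Lemma and the exactness of circular sequences deliver the remaining claims, essentially verbatim from the covariant case.
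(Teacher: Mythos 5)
Your overall strategy is exactly the paper's: the proof of Theorem~\ref{T:co-fund-right} is dualized by trading injective coresolutions for projective resolutions, cosyzygies for syzygies, and keeping the circular sequence and the Parallelogram Lemma as the workhorses. That much is correct. However, the parallelogram you draw for claim~(2) is wrong in a way that undercuts the argument.

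First, note the direction and label of the vertical arrow. Since $m_{0}\colon \Omega B\rightarrowtail P_{0}$ is the monomorphism from the epi--mono factorization of $d_{1}$ and $F$ is contravariant, the induced map is $F(m_{0})\colon F(P_{0})\lra F(\Omega B)$, not $F(\Omega B)\to F(P_{0})$, and it certainly need not be monic. Half-exactness of a contravariant $F$ applied to $0\to\Omega B\overset{m_{0}}\to P_{0}\overset{\pi}\to B\to 0$ gives exactness of
$F(B)\overset{F(\pi)}\lra F(P_{0})\overset{F(m_{0})}\lra F(\Omega B)$
at $F(P_{0})$ --- not injectivity of $F(m_{0})$. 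Second, your parallelogram is missing the vertex $F(B)$, which is the node $A$ in the Parallelogram Lemma and is essential: the assertion to be proved is exactness of $F(B)\to R_{0}F(B)\to\overline{F}(\Omega B)$ at $R_{0}F(B)$, so $F(B)$ must appear. Third, the sequence you call the ``lower peripheral sequence,'' namely $R_{0}F(B)\to F(P_{0})\to F(P_{1})$, is exact at $F(P_{0})$ by the very definition of $R_{0}F(B)$ as $\Ker F(d_{1})$, independently of any exactness hypothesis on $F$; it plays no role here. The correct parallelogram, dual to the one in the covariant proof, is
\[
\begin{tikzcd}
 F(B) \ar[r] \ar[rd]
 & R_{0}F(B) \ar[d, tail] \ar[r, "\beta_{F}"]
 & \overline{F}(\Omega B) \ar[d, tail]
 \\
 & F(P_{0}) \ar[r, "F(m_{0})"']
 & F(\Omega B)
\end{tikzcd}
\]
with monic vertical arrow $R_{0}F(B)\rightarrowtail F(P_{0})$, lower peripheral sequence $F(B)\to F(P_{0})\to F(\Omega B)$ exact at $F(P_{0})$ by half-exactness, and Parallelogram Lemma~(b) then giving exactness of the upper peripheral sequence at $R_{0}F(B)$; the final reduction uses that $\overline{F}(\Omega B)\hookrightarrow F(\Omega B)$ is monic. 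The argument for the higher $R_{i}F$ dualizes the paper's second parallelogram in the same way. Your treatment of claims~(3), (4), and~(5) is fine once the same subscript slip ($m_{1}$ in place of $m_{0}$) is fixed: $S^{1}F(B)=\Cok\bigl(F(m_{0})\colon F(P_{0})\to F(\Omega B)\bigr)$, so if $F$ carries monos to epis then $F(m_{0})$ is epic and $S^{1}F(B)=0$.
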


\begin{proof}
 Entirely analogous to that of Theorem~\ref{T:co-fund-right}.
\end{proof}

As a trivial example, we may take $F := (\blank, A)$. Then $\overline{F} = 0$, and $\rho_{F}$ and all the remaining comparison maps from the satellites to the derived functors are isomorphisms.

Similar to Corollary~\ref{C:connect-iso}, we have

\begin{corollary}
 If $F$ is a right-exact functor, then the canonical natural transformation 
 $R_{i}F \lra \overline{F} \circ \Omega^{i+1}$ is an isomorphism for each $i \geq 1$. \qed
\end{corollary}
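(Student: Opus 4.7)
The plan is to adapt the brief proof of Corollary~\ref{C:connect-iso} to the contravariant setting, replacing $\Sigma$ by $\Omega$ and cosyzygies by syzygies. First, since $F$ is right-exact (i.e., carries monomorphisms to epimorphisms) it is in particular half-exact, so Theorem~\ref{T:contra-4-term-rho}(2) guarantees that the right fundamental sequence~\eqref{D:rfs-contra-fun} is exact.

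Next I would verify that the right satellites $S^i F$ vanish in positive degrees. For a fixed $B$, pick a projective resolution $\ldots \to P_1 \xrightarrow{d_1} P_0 \xrightarrow{\pi} B \to 0$ and let $m : \Omega B \hookrightarrow P_0$ be the kernel of $\pi$. Dualizing the definition of $S^1 F$ from the covariant case in Section~\ref{S:rfs-cov}, one has $S^1 F(B) = \Cok F(m)$. Applying right-exactness of $F$ to the short exact sequence $0 \to \Omega B \to P_0 \to B \to 0$ yields the exactness of $F(B) \to F(P_0) \xrightarrow{F(m)} F(\Omega B) \to 0$, so $F(m)$ is epic and $S^1 F(B) = 0$. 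Iterating (either by replacing $B$ with the successive syzygies $\Omega^{i-1} B$, or equivalently by dimension shifting) gives $S^i F = 0$ for every $i \geq 1$.

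Plugging these vanishings into the now-exact sequence~\eqref{D:rfs-contra-fun}, for each $i \geq 1$ I would read off the fragment
\[
0 = S^i F \lra R_i F \lra \overline{F} \circ \Omega^{i+1} \lra S^{i+1} F = 0,
\]
whose exactness forces the middle arrow --- which is precisely the canonical natural transformation in the corollary --- to be an isomorphism.

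The only delicate point I anticipate is making the description $S^i F(B) = \Cok F(m_i)$ fully rigorous in the contravariant setting, since the paper never spells it out as explicitly as it does in Section~\ref{S:rfs-cov} for covariant functors; checking it directly from the diagram defining $\beta_F$, or by interpreting $F$ as a covariant functor on $\mathcal{A}^{\mathrm{op}}$, should suffice. Once this identification is pinned down, the argument is purely formal and follows the covariant template verbatim.
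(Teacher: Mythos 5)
Your argument is correct and is essentially the direct dualization of the paper's two-line proof of the covariant Corollary~\ref{C:connect-iso} (vanishing of the positive satellites plus exactness of the fundamental sequence), which is precisely what the author intends by stating the contravariant corollary with a bare \qed. Two small remarks. First, the parenthetical gloss ``right-exact (i.e., carries monomorphisms to epimorphisms)'' is not quite a definition --- carrying monos to epis alone does not give half-exactness --- but your actual use of right-exactness (applying $F$ to $0 \to \Omega B \to P_0 \to B \to 0$ to conclude $F(m)$ is epic) is the correct and full use, and right-exactness does indeed imply half-exactness, so the argument stands. Second, the ``delicate point'' you flag at the end is in fact not delicate: the identification $S^1F(B) = \Cok F(m)$ is already displayed in the diagram the paper draws just before Theorem~\ref{T:contra-4-term-rho}, where $F(\Omega B) \twoheadrightarrow S^1F(B)$ appears explicitly as the cokernel of $F(m)$, so no further justification is needed.
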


\begin{remark}
Assume $\mathscr{A}$ is the category of left $\Lambda$-modules over a ring 
 $\Lambda$. Recall that for any contravariant functor $F$, the canonical natural transformation $ F \to \big(\blank, F(\Lambda)\big)$ is an isomorphism on $\Lambda$. If $F$ carries coproducts to products, then this natural transformation is an isomorphism on all projectives, and therefore it is isomorphic to~$\rho_{F}$. Hence, the canonical natural transformation 
\[
R_{i}\rho_{F} : R_{i}F \lra \Ext^{i}(\blank, F(\Lambda))
\] 
is an isomorphism for all $i$, and each row of~\eqref{D:rfs-contra-fun} rewrites as 
\[
\overline{F} \circ \Omega^{i} \lra S^{i}F \lra \Ext^{i}(\blank, F(\Lambda)).
\]
Recall also that if $F$ is finitely presented, then the covariant defect $v(F)$ 
of~$F$ is defined as $F(\Lambda)$. If a finite presentation of $F$ arises as the Yoneda embedding of $f : B \lra A$, then $v(F) \simeq \Cok f \simeq F(\Lambda)$, and each row of the left fundamental sequence is of the form
\[
\overline{F} \circ \Omega^{i} \lra S^{i}F \lra \Ext^{i}(\blank, v(F)).
\]
\end{remark}

\begin{corollary}
Suppose $F$ is finitely presented and that $\Lambda$ is left hereditary. Then the first row of the right fundamental sequence becomes
\begin{equation}
\begin{tikzcd}
	0 \ar[r]
	& \overline{F} \ar[r]
	& F \ar[r, "\rho_{F}"]
	& \big(\blank, v(F)\big) \ar[r]
	& 0.
\end{tikzcd}
\end{equation}
If $F$ is half-exact, this sequence is exact, in fact, split exact.
\end{corollary}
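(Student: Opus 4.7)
The plan is to collapse the right fundamental sequence~\eqref{D:rfs-contra-fun} using the two hypotheses separately: left-heredity will kill every term of the form $\overline{F}\circ\Omega^{i}$ with $i\geq 1$, and finite presentation will let me identify $R_{0}F$ as a representable functor. I expect no serious obstacle here; the real content is already in Theorem~\ref{T:contra-4-term-rho} and in the contravariant analog of~\ref{SS:fpf} recorded in the remark just above.

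First I would observe that, since $\Lambda$ is left hereditary, every syzygy $\Omega B$ is projective (submodules of projectives are projective), and $\overline{F}$ vanishes on projectives by construction (as the kernel of $\rho_{F}$, which is an isomorphism on projectives). Hence $\overline{F}\circ\Omega^{i}=0$ for all $i\geq 1$, so the first row of~\eqref{D:rfs-contra-fun} truncates to the three-term complex $0 \lra \overline{F} \lra F \lra R_{0}F$, to which I may append a trailing zero.

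Next I would invoke the finite presentation of $F$ to identify $R_{0}F$. Following the contravariant analog of Section~\ref{SS:fpf} recorded in the remark preceding the corollary, the universal property of the zeroth right-derived functor combined with the Yoneda lemma yields a natural isomorphism $R_{0}F \simeq (\blank, v(F))$, where $v(F) \simeq F(\Lambda) \simeq \Cok f$ for any presentation $(\blank, B) \lra (\blank, A) \lra F \lra 0$ induced by $f : B \lra A$. Substituting this into the truncated row produces the claimed 4-term complex, and the map $F \lra (\blank, v(F))$ is precisely $\rho_{F}$.

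Finally, assuming $F$ is half-exact, I would apply Theorem~\ref{T:contra-4-term-rho}(2) to conclude exactness of the entire fundamental sequence, hence exactness of its truncation. Split-exactness is then free of charge: the contravariant representable $(\blank, v(F))$ is a projective object in the category of additive contravariant functors (by Yoneda), so the epimorphism $F \twoheadrightarrow (\blank, v(F))$ admits a section. The only mildly delicate point is verifying that the isomorphism $R_{0}F \simeq (\blank, v(F))$ is compatible with $\rho_{F}$, but this is forced by the uniqueness of the zeroth right-derived transformation and is handled exactly as in~\ref{SS:fpf}.
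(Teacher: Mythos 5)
Your proposal is correct and follows essentially the same line as the paper: identify $R_{0}F$ with $(\blank, v(F))$ via the remark preceding the corollary (finite presentation plus the fact that such functors carry coproducts to products), observe that left-heredity makes all syzygies projective so $\overline{F}\circ\Omega$ vanishes and the row terminates, invoke Theorem~\ref{T:contra-4-term-rho}(2) for exactness, and obtain the splitting from projectivity of representables. The paper's own proof is terser (it tacitly absorbs the left-hereditary vanishing of $\overline{F}\circ\Omega$ and the half-exactness exactness claim into "only the last claim needs a proof"), but the content is the same; you have merely spelled out steps the paper delegates to the preceding remark and to the theorem.
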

\begin{proof}
Since finitely presented contravariant functors carry coproducts to products,  only the last claim needs a proof, but this is immediate since representable functors are projective objects.
\end{proof}

\section{The left fundamental sequence of a contravariant functor}\label{S:left-fs-contra}

Suppose $\mathcal{A}$ has enough injectives. For a contravariant functor $F : \mathcal{A} \lra \ab$, its \texttt{quot-stabilization} 
$\underline{F} : \mathcal{A} \lra \ab$ is defined as the cokernel of the canonical natural transformation $\lambda_{F} : L^{0}F \lra F$, whose domain is the zeroth left-derived functor of~$F$. Thus we have a defining exact sequence
\begin{equation}
L^{0}F  \overset{\lambda_{F}} \lra F \lra \underline{F} \lra 0
\end{equation}
where the use of a superscript (as opposed to a subscript) simply indicates 
that~$F$ is contravariant. Since $\lambda_{F}$ is an isomorphism on injectives, 
$\underline{F}$ vanishes on injectives. In fact, $\overline{F}$ is the largest quotient functor of $F$ with this property. It follows from Schanuel's lemma, that 
$\underline{F} \circ \Sigma$, where~$\Sigma$ denotes the cosyzygy operation in an injective resolution, is a well-defined functor. 

Equivalently, given a $\Lambda$-module $B$, $\underline{F}(B)$ can be computed (at least, theoretically) by the following procedure. Embed $B$ in an injective container $0 \lra B  \overset{\iota} \lra I$, apply~$F$, and take the cokernel of $F(\iota)$. Thus we have defining exact sequences
\begin{equation}
 F(I) \overset{F(\iota)} \lra F(B)\lra \underline{F}(B) \lra 0.
\end{equation}

Similar to Section~\ref{S:L0}, we construct a natural transformation 
\[
\alpha_{F} : \underline{F} \circ \Sigma \lra L^{0}F,
\]
and, using circular sequences, eventually obtain a sequence of functors

\begin{equation}\label{D:lfs-contra-fun}
\begin{tikzcd}
  \ldots \ar[r] 
      &  S_{3}F \ar[r]
      			  	\ar[d, phantom, ""{coordinate, name=Z}]
         & \underline{F}\circ \Sigma^{3} \ar[dll, rounded corners,
                                   to path={ -- ([xshift=2ex]\tikztostart.east)
						|- (Z) [pos=0.3]\tikztonodes
						-| ([xshift=-2ex]\tikztotarget.west) 
						-- (\tikztotarget)}] 
\\
    L^{2}F \ar[r]   
      & S_{2}F \ar[r]
      			  	\ar[d, phantom, ""{coordinate, name=Y}]
         & \underline{F}\circ \Sigma^{2} \ar[dll, rounded corners,
                                   to path={ -- ([xshift=2ex]\tikztostart.east)
						|- (Y) [near end]\tikztonodes
						-| ([xshift=-2ex]\tikztotarget.west) 
						-- (\tikztotarget)}] 
\\
   L^{1}F \ar[r]			
      &   S_{1}F	\ar[r]
      				\ar[d, phantom, ""{coordinate, name=X}]
         & \underline{F}\circ \Sigma  \ar[dll, "\alpha_{F}", rounded corners,
                                   to path={ -- ([xshift=2ex]\tikztostart.east)
						|- (X) [pos=0.3]\tikztonodes
						-| ([xshift=-2ex]\tikztotarget.west) 
						-- (\tikztotarget)}]
\\
   L^{0}F \ar[r, "\lambda_{F}"]              	 
      & F	\ar[r]			
         &  \underline{F} \ar[r]
            & 0
\end{tikzcd}
\end{equation}

\begin{definition}
 We shall refer to~\eqref{D:lfs-contra-fun} as the left fundamental sequence of $F$.
\end{definition}

\begin{theorem}\hfill \label{T:contra-fund-left}
\begin{enumerate}
 \item The left fundamental sequence~\eqref{D:lfs-contra-fun} is a complex, whose homology is zero except possibly, at the terms $L^{i}F$.
  
 \item If $F$ is half-exact, then the sequence is exact. 
 
 \item If $F$ carries epimorphisms to monomorphisms, then $\alpha_{F}$ is monic.
 
 \item If $F$ carries monomorphisms to epimorphisms, then $\underline{F} = 0$ and $\lambda_{F}$ is epic.
 
 \item $\lambda_{F}$ is an isomorphism if and only if $F$ is right-exact.

\end{enumerate}
\end{theorem}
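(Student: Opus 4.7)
The plan is to mimic the proofs of Theorems~\ref{T:co-fund-right},~\ref{T:co-fund-left}, and~\ref{T:contra-4-term-rho}, with every construction systematically dualized. First I would choose an injective coresolution $0 \to B \to I^{0} \to I^{1} \to \ldots$ of $B$, factor each coboundary $d_{i} : I^{i} \to I^{i+1}$ as $I^{i} \twoheadrightarrow \Sigma^{i+1} B \hookrightarrow I^{i+1}$ through the cosyzygy, and apply the contravariant $F$. In the covariant setting of Section~\ref{S:L0}, the role of epimorphisms onto syzygies and of kernels of $F(d_{i})$ was central; here they are replaced by monomorphisms of cosyzygies into injectives and by cokernels of $F(d_{i})$. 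Assembling the resulting circular diagrams in exact analogy with~\eqref{D:circ-diag-quot} yields the comparison natural transformation $\alpha_{F} : \underline{F} \circ \Sigma \lra L^{0}F$ together with all of the remaining horizontal comparison arrows needed to build~\eqref{D:lfs-contra-fun}.

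Once the sequence is assembled, part~(1) reduces to a diagram chase on the dualized extended circular diagram, identical in structure to the chase that establishes part~(1) of Theorem~\ref{T:co-fund-right}: each composition vanishes because it sits inside a circular sequence of a composable pair, and exactness at the $\underline{F} \circ \Sigma^{i}$ and $S_{i}F$ spots follows from the exactness of those circular sequences combined with surjectivity (respectively, injectivity) of the evident comparison maps.

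For (2), I would apply the Parallelogram Lemma in the same two-step pattern as in the proof of Theorem~\ref{T:co-fund-right}(2), now invoking half-exactness of $F$ to promote exactness of $F(I^{i+1}) \to F(\Sigma^{i+1} B) \to F(I^{i})$ at its middle term into exactness of the relevant peripheral sequence. Parts (3) and (4) are immediate: if $F$ carries epimorphisms to monomorphisms, then $F$ applied to the epi $I^{0} \twoheadrightarrow \Sigma B$ is monic, and a short diagram chase on the defining sequence of $\alpha_{F}$ yields injectivity; dually, if $F$ carries monomorphisms to epimorphisms, then $F$ applied to $B \hookrightarrow I$ is epic, forcing $\underline{F} = 0$ directly from the defining sequence of $\underline{F}$, and $\lambda_{F}$ being epic then follows from~(1). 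Part~(5) is the standard universal property of the zeroth left-derived functor.

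The main obstacle is not conceptual but bookkeeping: one must track the variance carefully so that each application of the Parallelogram Lemma and each invocation of a circular sequence is in the correct form. Once the dualized diagram is drawn, everything is forced, and no new ideas are required beyond those already deployed in Sections~\ref{S:rfs-cov} and~\ref{S:L0}.
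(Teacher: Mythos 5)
Your proposal follows exactly the paper's intended route: the published proof of Theorem~\ref{T:contra-fund-left} is the single sentence ``Similar to the proof of Theorem~\ref{T:co-fund-left}'', which in turn refers back to Theorem~\ref{T:co-fund-right}, and you have spelled out that dualization in the same spirit. Parts~(1), (3), (4), and~(5) as you describe them are the paper's arguments carried over with the correct variance.

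There is one bookkeeping slip in your sketch of part~(2), of precisely the kind you warn yourself about. Half-exactness of the contravariant $F$ applied to the short exact cosyzygy sequence $0 \to \Sigma^{i}B \to I^{i} \to \Sigma^{i+1}B \to 0$ gives exactness of $F(\Sigma^{i+1}B) \to F(I^{i}) \to F(\Sigma^{i}B)$ at $F(I^{i})$, not exactness of $F(I^{i+1}) \to F(\Sigma^{i+1}B) \to F(I^{i})$ at $F(\Sigma^{i+1}B)$; the latter is $F$ applied to the epi-mono factorization of $d_{i}$, which is not a short exact sequence, so half-exactness says nothing about its middle term. The Parallelogram Lemma has to be fed exactness at the injective term $F(I^{i})$, and with that correction your two-step pattern reproduces the paper's argument verbatim.
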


\begin{proof}
 Similar to the proof of Theorem~\ref{T:co-fund-left}.
\end{proof}

Similar to Corollary~\ref{C:connect-iso}, we have

\begin{corollary}
 Suppose $F$ is a left-exact functor. Then the canonical natural transformation  
 $\underline{F} \circ \Sigma^{i+1} \lra  L^{i}F $ is an isomorphism for each $i \geq 1$.  \qed
\end{corollary}

\section{Stabilization and the exact Hom functors}\label{S:AR-form}

Before discussing how stabilization interacts with exact $\Hom$ functors, it is convenient to introduce the following notation. In any category, the covariant $\Hom$ functor $(A, \blank)$ will be denoted 
by $h^{A}$. Similarly, the contravariant $\Hom$ functor $(\blank, B)$ will be denoted by $h_{B}$. In the case of a category modulo projectives (respectively, injectives) we shall use the symbol 
$\underline{h}$ (respectively,~$\overline{h}$).

Suppose $\mathcal{B}$ and $\mathcal{C}$ are abelian categories with $\mathcal{B}$ having enough injectives. Also assume that 
$\mathcal{C}$ has an injective object $J$ and let $h_{J} : = (\blank, J)$. Let $F: \mathcal{B} \to \mathcal{C}$ be a functor admitting a right adjoint $G: \mathcal{C} \to \mathcal{B}$. 

Given $B \in  \mathcal{B}$ we choose an injective container
$0 \to B \to I$ and compute the injective stabilization of $F$ at $B$ by the exact sequence
\[
0 \lra \overline{F}(B) \lra F(B) \lra F(I).
\] 
Applying the exact functor $h_{J}$ we have the exact sequence
\[
(F(I), J) \lra (F(B), J) \lra (\overline{F}(B), J) \lra 0.
\]
Rewriting the first two terms by the adjoint property, we have the exact sequence
\[
(I, G(J)) \lra (B, G(J)) \lra (\overline{B, G(J)}) \lra 0,
\]
where overline denotes Hom modulo injectives. Comparing the last terms, we have
\begin{proposition}
There is a functor isomorphism $h_{J} \circ \overline{F} \simeq \overline{h}_{G(J)}$.
\qed
\end{proposition}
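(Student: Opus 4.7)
The plan is to formalize the chain of identifications already displayed in the excerpt, which reduces the proposition to recognizing a certain cokernel as the ``Hom modulo injectives'' functor. Concretely: pick an injective container $\iota : B \lra I$, so that $\overline{F}(B)$ is the kernel of $F(\iota)$; apply the exact functor $h_{J}$; and invoke the natural adjunction isomorphism $(F(X), J) \simeq (X, G(J))$ to rewrite the resulting right-exact sequence as
\begin{equation*}
 (I, G(J)) \overset{(\iota,\, G(J))}{\lra} (B, G(J)) \lra h_{J}\overline{F}(B) \lra 0.
\end{equation*}
Thus $h_{J}\overline{F}(B)$ is identified with the cokernel of $(\iota, G(J))$.

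The key step—and the main point of the argument—is to identify this cokernel with $\overline{h}_{G(J)}(B)$, that is, $(B, G(J))$ modulo the subgroup of maps factoring through injective objects. The image of $(\iota, G(J))$ consists of morphisms $B \lra G(J)$ that extend along $\iota$, and any such morphism factors through the injective $I$. Conversely, if $f : B \lra G(J)$ factors as $f = gh$ with $h : B \lra I'$ and $I'$ injective, then injectivity of $I'$ together with the monic $\iota$ produces an extension $\widetilde{h} : I \lra I'$ with $\widetilde{h}\iota = h$, so $f = g\widetilde{h}\iota$ also extends along $\iota$. Hence the image of $(\iota, G(J))$ coincides with the subgroup of morphisms factoring through injectives, and the cokernel is exactly $\overline{h}_{G(J)}(B)$.

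Functoriality in $B$ and independence of the chosen container are then routine. Since $\overline{F}$ is a well-defined additive functor by a Schanuel-type argument, $h_{J}\overline{F}$ is a contravariant functor on $\mathcal{B}$; similarly, the classical construction $\overline{h}_{G(J)}$ is functorial. The naturality of the hom-tensor adjunction ensures that the pointwise isomorphism above is natural in~$B$: given $\varphi : B \lra B'$, lift it (via injectivity of the target container) to a map of injective containers, apply $F$ and then $h_{J}$, and use the universal property of cokernels; independence of the lift is automatic because two lifts differ by a map whose adjunct lies in the image of $(\iota, G(J))$. This assembles the pointwise identification into the claimed functor isomorphism $h_{J} \circ \overline{F} \simeq \overline{h}_{G(J)}$, with no genuine obstacle beyond the image characterization of the preceding paragraph.
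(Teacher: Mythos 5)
Your proof is correct and follows essentially the same route as the paper: apply the exact functor $h_J$ to the defining sequence $0 \to \overline{F}(B) \to F(B) \to F(I)$, rewrite the first two terms by the adjunction, and identify the resulting cokernel with Hom modulo injectives. The paper carries out the first two steps and then treats the identification of the cokernel with $\overline{h}_{G(J)}(B)$ (and the functoriality) as immediate, whereas you spell out the image characterization — maps extending along $\iota$ coincide with maps factoring through injectives — and the naturality; these are correct but routine additions, not a different method.
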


Now we want to show that the original Auslander-Reiten formula is a particular case of the just proved result. To this end, assume that $\Lambda$ contains a commutative subring $R$. We don't impose any restrictions on $\Lambda$ or $R$. Let $A$ be a right $\Lambda$-module and $B$ a left $\Lambda$-module. Let $J$ be an injective 
$R$-module and  $D_{J} : =  \prescript{}{R}(\blank, J)$. Setting $F := A \otimes _{\Lambda} \blank$, we have a functor from left $\Lambda$ modules to $R$-modules. Setting $G : = \prescript{}{R}(A, \blank)$, we have a functor from $R$-modules to left $\Lambda$-modules. When $A$ is finitely presented, the sub-stabilization of $F$ is given by $\Ext^{1}(\Tr A, \blank)$. Finally, replacing $A$ by $\Tr A$, we write the  isomorphism from the proposition in the form
\[
D_{\mathbf{J}}\Ext^{1}_{\Lambda}(A,B) \simeq \prescript{}{\Lambda} (\overline{B, D_{\mathbf{J}}\Tr A}),
\]
thus recovering the original Auslander-Reiten formula. In contrast with that formula, the isomorphism in the proposition  can be evaluated on arbitrary modules, finite or infinite.

Next we shall establish a result formally dual to the one just proved. Suppose $\mathcal{C}$ and $\mathcal{D}$ are abelian categories with $\mathcal{D}$ having enough projectives. Also assume that 
$\mathcal{C}$ has a projective object $Q$ and let 
$h^{Q} : = (Q, \blank)$. Let $F: \mathcal{C} \to \mathcal{D}$ be a functor admitting a right adjoint $G: \mathcal{D} \to \mathcal{C}$.

Given $A \in  \mathcal{D}$ we choose a projective ancestor
$P \to A \to 0$ and compute the projective stabilization of $G$ at $A$ by the exact sequence
\[
G(P) \lra G(A) \lra \underline{G}(A) \lra 0.
\] 
Applying the exact functor $h^{Q}$ we have the exact sequence
\[
(Q, G(P)) \lra (Q, G(A)) \lra (Q, \underline{G}(A)) \lra 0.
\] 
Rewriting the first two terms by the adjoint property, we have the exact sequence
\[
(F(Q), P) \lra (F(Q), A) \lra (\underline{F(Q), A}) \lra 0,
\]
where underline denotes Hom modulo projectives. Comparing the last terms, we have
\begin{proposition}\label{P:dual}
There is a functor isomorphism $h^{Q} \circ \underline{G} \simeq \underline{h}^{F(Q)}$. \qed
\end{proposition}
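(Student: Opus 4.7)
The plan is to dualize, step by step, the argument just given for the preceding proposition. Starting from $A \in \mathcal{D}$, I would choose a projective ancestor $P \twoheadrightarrow A$ and write down the defining right-exact sequence
\[
G(P) \lra G(A) \lra \underline{G}(A) \lra 0
\]
for the quot-stabilization of $G$ at $A$. Since $Q$ is projective in $\mathcal{C}$, the functor $h^{Q} = (Q, \blank)$ is exact, so applying it preserves exactness and yields
\[
(Q, G(P)) \lra (Q, G(A)) \lra (Q, \underline{G}(A)) \lra 0.
\]
Invoking the adjunction $(Q, G(\blank)) \simeq (F(Q), \blank)$ on the first two terms rewrites this as
\[
(F(Q), P) \lra (F(Q), A) \lra (Q, \underline{G}(A)) \lra 0.
\]

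The key step, where the argument is not entirely formal, is to identify the displayed cokernel with $\underline{h}^{F(Q)}(A)$, i.e., with $(F(Q), A)$ modulo maps factoring through projectives. By construction, the cokernel of the first map is $(F(Q), A)$ modulo the maps factoring through this particular $P$. To upgrade that to maps factoring through an arbitrary projective, I would argue as follows: given $f\colon F(Q) \to A$ factoring as $f = p'g$ with $P'$ projective, the projectivity of $P'$ together with the epimorphism $P \twoheadrightarrow A$ produces a lift $P' \to P$, showing that $f$ factors through $P$ as well. Hence the subgroup of maps factoring through $P$ coincides with the subgroup of maps factoring through all projectives, and the cokernel is exactly $\underline{h}^{F(Q)}(A)$.

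This gives, for each $A$, a natural isomorphism $(Q, \underline{G}(A)) \simeq \underline{h}^{F(Q)}(A)$. Functoriality in $A$ is the only thing left, and it is routine: the defining sequence for $\underline{G}$ is functorial in $A$ (since any morphism $A \to A'$ can be lifted to the chosen projective ancestors up to a suitable coherence that disappears after passing to the cokernel), the adjunction is natural, and the passage to Hom modulo projectives is natural as well. Assembling these naturalities produces the desired functor isomorphism $h^{Q} \circ \underline{G} \simeq \underline{h}^{F(Q)}$. The main (minor) obstacle is the projective-ancestor identification in the key step; once that is in hand, the rest is a straight transcription of the proof given just above Proposition~\ref{P:dual}.
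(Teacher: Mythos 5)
Your argument is correct and follows the same route as the paper's: choose a projective ancestor $P \twoheadrightarrow A$, apply the exact functor $h^{Q}$ to the defining cokernel sequence for $\underline{G}$, rewrite the first two terms via the adjunction, and identify the resulting cokernel with $\Hom$ modulo projectives. The only place you add detail that the paper leaves implicit is the observation that, because $P \twoheadrightarrow A$ is epic with $P$ projective, maps $F(Q) \to A$ factoring through this $P$ are precisely those factoring through an arbitrary projective; this is correct and worth stating.
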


\end{document}